\newfont{\bb}{msbm10 at 12pt}
\newfont{\bbt}{msbm10 at 9pt}
\def\r{\mathbb R}
\def\d{\mathbb D}
\def\n{\mathbb N}
\def\z{\mathbb Z}
\def\t{\mathbb T}
\def\b{\hbox{B}}
\def\fr{\mathscr F}
\def\sr{\mathscr S}
\def\dr{\mathscr D}
\def\w{\mathscr W}
\def\er{\mathscr E}
\DeclareMathOperator{\IM}{Im}
\DeclareMathOperator{\RE}{Re}
\DeclareMathOperator{\supp}{supp}
\DeclareMathOperator{\D}{\langle D \rangle}
\DeclareMathOperator{\op}{Op}
\newcommand{\norm}[1]{\left\Vert #1 \right\Vert}
\newcommand{\abs}[1]{\left\vert #1 \right\vert}
\newcommand{\set}[1]{\left\{#1\right\}}
\newcommand{\eps}{\epsilon}
\newcommand{\prt}[1]{\left( #1 \right)}
\newcommand{\crch}[1]{\left[ #1 \right]}
\numberwithin{equation} {section}
\theoremstyle{plain}\newtheorem{lemma}{Lemma}[section]
\theoremstyle{plain}\newtheorem{proposition}{Proposition}[section]
\theoremstyle{plain}\newtheorem{theorem}{Theorem}[section]
\theoremstyle{plain}\newtheorem{definition}{Definition}[section]
\theoremstyle{plain}\newtheorem{remark}{Remark}[section]
\theoremstyle{plain}\newtheorem{corollary}{Corollary}[section]
\theoremstyle{plain}\newtheorem{conjecture}{Conjecture}[section]
\theoremstyle{plain}\newtheorem{notation}{Notation}[section]
\theoremstyle{plain}\newtheorem{definition-notation}{Definition-Notation}[section]
\theoremstyle{plain}\newtheorem{definition-proposition}{Definition-Proposition}[section]
\numberwithin{equation}{section}
\title{On the Cauchy problem for dispersive Burgers type equations}
\author{Ayman Rimah Said}
\def\notina[#1]#2{\begingroup\def\thefootnote{\fnsymbol{footnote}}\footnote[#1]{#2}\endgroup}
   \def\XXint#1#2#3{{\setbox0=\hbox{$#1{#2#3}{\int}$}
        \vcenter{\hbox{$#2#3$}}\kern-.5\wd0}}
\begin{document}

\notina[0]{ PhD student at l'IMO, Paris Sud University and Centre Borelli, ENS Paris Saclay. email: \url{aymanrimah@gmail.com}.}

\begin{abstract}
We study the paralinearised weakly dispersive Burgers type equation: 
$$\partial_t u+T_u \partial_xu+\partial_x \abs{D}^{\alpha-1}u=0,\ \alpha \in ]1,2[,$$
which contains the main non linear ``worst interaction" terms, that is low-high interaction terms, of the usual weakly dispersive Burgers type equation:
$$\partial_t u+u\partial_x u+\partial_x \abs{D}^{\alpha-1}u=0,\ \alpha \in ]1,2[,$$ with $u_0 \in H^s(\d)$, where $\d=\t \text{ or } \r$. 

Through a paradifferential complex Cole-Hopf type gauge transform we introduced in \cite{Ayman20}, we prove a new a priori estimate in $H^s(\d)$ under the control of $\norm{D^{2-\alpha}\left(u^2\right)}_{L^1_tL^{\infty}_x}$, improving upon the usual hyperbolic control  $\norm{\partial_x u}_{L^1_tL^\infty_x}$. Thus we eliminate the ``standard" wave breaking scenario in case of blow up as conjectured in \cite{Saut14}.

For $\alpha\in ]2,3[$ we show that we can completely conjugate the paralinearised dispersive Burgers equation to a semi-linear equation of the form:
$$\partial_t \left[T_{e^{iT_{p(u)}}}u\right]+ \partial_x \abs{D}^{\alpha-1}\left[T_{e^{iT_{p(u)}}}u\right]=T_{R(u)}u,\ \alpha \in ]2,3[,$$
where $T_{p(u)}$ and $T_{R(u)}$ are paradifferential operators of order $0$ defined for $u\in L^\infty_t C^{(2-\alpha)^+}_*$. 
\end{abstract}

\maketitle

\vspace{-5mm}

\tableofcontents

\vspace{-7mm}

\section{Introduction}
This paper is concerned with the well-posedness of the paralinearised ``weak" dispersive perturbations of the Burgers equation:
\begin{equation}\label{glob CP DBeq_intro_BDP para'}
\partial_t u+T_u \partial_x u+\partial_x \abs{D}^{\alpha-1}u=0,\ u_0\in H^s,
\end{equation}
which is derived from the ``weak" dispersive perturbations of the Burgers equation:
\begin{equation}\label{glob CP DBeq_intro_BDP}
\partial_t u+u\partial_x u+\partial_x \abs{D}^{\alpha-1}u=0,\text{ where } \alpha \in ]1,2] \text{ and } \abs{D}=\op(\abs{\xi}).
\end{equation}

For $\alpha=2$ in \eqref{glob CP DBeq_intro_BDP}, we have the usual Benjamin-Ono equation, for $\alpha=3$ we have the KdV equation, for $\alpha=\frac{1}{2}$ and $\alpha=\frac{3}{2}$ respectively this equation is a ``toy" model for the system obtained by paralinearisation and symetrisation of water waves system with and without surface tension \cite{Alazard15,Alazard18,Alazard16,Alazard11,Alazard13}.

The Cauchy problem associated to \eqref{glob CP DBeq_intro_BDP} has been extensively studied in the literature for a comprehensive and complete overview of those equations and their link to other problems coming from mechanical fluids and dispersive non linear equations in physics we refer to J-C. Saut's \cite{Saut13,Saut18}.

For $\alpha\geq 2$ the Cauchy problem is now  very well understood where essentially four main techniques come into play to understand it. A first approach is to use smoothing effects and refined Strichartz estimates see \cite{Ponce91,Koch03}. A second approach is to use time dependent frequency localised spaces as in \cite{Ionescu07}. A third, first introduced by Tao in \cite{Tao04} is to use a gauge transform to eliminate the worst interaction terms, this can be combined with frequency localised spaces as was done \cite{Burq08,MolinetPilod12} for the Benjamin-Ono equation and \cite{Herr10} for $\alpha\in ]2,3[$, which to the author's knowledge is the only time the gauge transform was used to improve upon the local well-posedness of dispersive Burgers equation in the fractional dispersion case. A last approach is to use a gauge transform combined with a normal form transform as was done in \cite{Ifrim17} to give a simple and elegant proof to the $L^2$ well-posedness of the Benjamin-Ono equation.

Except for the first approach, in the words of \cite{Schippa19}, those techniques face major technical difficulties for $\alpha<2$ and seem to completely fail. The goal of this paper is to show that using the gauge transform introduced in \cite{Ayman20}, the last approach can be still carried for $1\leq\alpha<2$ and it gives $H^s(\d)$ estimates under the control of $\norm{D^{2-\alpha}\left(u^2\right)}_{L^1_tL^{\infty}_x}$, improving upon the known hyperbolic control  $\norm{\partial_x u}_{L^1_tL^\infty_x}.$ To the author's best knowledge this is the first time a gauge transform technique was carried out to improve upon the local well-posedness of the weakly dispersive Burgers equation.

We will also show that for $2\leq\alpha\leq 3$, this gauge transform can be efficiently used to completely conjugate the paralinearised dispersive Burgers equation to a semi-linear dispersive equation under the control of $\norm{u}_{L^\infty_t C^{(2-\alpha)^+}_{*}}$. Again to the author's best knowledge this is the first time such a transformation is carried out outside the integrable cases, that is $\alpha=2$ and $\alpha=3$. For those cases, that is the Benjamin-Ono and the KdV equations, suitable Birkhoff coordinates were constructed to ``diagonalise" the infinite dimension Hamiltonian, for this we refer to the pioneering works of G\'erard, Kappeler and Topalov \cite{Gerard20,Gerard20-1,Kappler06}.

First we start with the simplest Cauchy theory for this equation that only uses hyperbolic estimates reads:
\begin{theorem}\label{glob CP DBeq_intro_th simple CP}
Consider three real numbers $\alpha\in [0,2[$, $s\in]1+\frac{1}{2},+\infty[$, $r>0$ and $u_0 \in H^{s}(\d)$. Then there exists $C_s>0$ such that for $0<T< \frac{C_s}{r+\norm{\partial_x u_0}_{L^\infty(\d)}}$ and all $v_0$ in the ball $\b(u_0,r)\subset  H^{s}(\d)$ there exists a unique $v\in C([0,T],H^s(\d))$ solving the Cauchy problem:
\begin{equation}\label{glob CP DBeq_intro_th simple CP_mod on r}
\begin{cases} 
\partial_t v+v\partial_xv+\abs{D}^{\alpha-1} \partial_xv=0 \\
v(0,\cdot)=v_0(\cdot)
\end{cases}.
\end{equation}
Moreover, for all $\delta >0$ and all of $\mu \in [0,s],$ we have for all $t\in [0,T]$:
\begin{equation}
  \norm{v(t)}_{H^{\mu}(\d)} \leq e^{C_\mu \norm{\partial_x v}_{L^1([0,T],L^\infty(\d))}}  \norm{v_0}_{H^{\mu}(\d)}. 
\end{equation}
Taking $v_0 \in \b(u_0,r)$, and assuming moreover that $u_0 \in H^{s+1}(\d)$ then for all $t\in [0,T]$:
\begin{align}
\norm{(u-v)(t)}_{H^{s}(\d)} &\leq e^{C_s( \norm{\partial_x (u,v)}_{L^1([0,t],L^\infty(\d))}+ t\norm{u_0}_{H^{s+1}(\d)})}  \norm{u_0-v_0}_{H^{s}(\d)}.
\end{align}
\end{theorem}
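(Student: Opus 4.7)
The plan is to treat \eqref{glob CP DBeq_intro_th simple CP_mod on r} as a quasilinear hyperbolic equation with a skew-adjoint dispersive perturbation, so that energy estimates are obtained exactly as for the inviscid Burgers equation. The symbol $i\xi\abs{\xi}^{\alpha-1}$ is purely imaginary, hence the Fourier multiplier $\abs{D}^{\alpha-1}\partial_x$ is skew-adjoint on $H^\mu(\d)$ for every real $\mu$; so its contribution to any $L^2$-type energy identity drops out. The whole argument will therefore reduce to estimating the transport term $v\partial_x v$.

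\textbf{Step 1 (a priori $H^\mu$ estimate).} For a smooth solution $v$, apply $\D^{\mu}$ to the equation, pair with $\D^{\mu}v$ in $L^2$, and use skew-adjointness of the dispersive part. The remaining term is
\[
 \int_\d \D^{\mu}(v\partial_x v)\cdot \D^{\mu}v\,dx
 =\int_\d v\,\partial_x \D^{\mu}v\cdot \D^{\mu}v\,dx
 +\int_\d [\D^{\mu},v]\partial_x v\cdot \D^{\mu}v\,dx.
\]
The first integral equals $-\tfrac12\int(\partial_x v)(\D^{\mu}v)^2\,dx$, bounded by $\norm{\partial_x v}_{L^\infty}\norm{v}_{H^\mu}^2$. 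For the commutator I would use the Kato--Ponce type inequality
\[
 \norm{[\D^{\mu},v]\partial_x v}_{L^2}\lesssim \norm{\partial_x v}_{L^\infty}\norm{v}_{H^\mu},
\]
valid for $\mu\geq 0$ (when $\mu=0$ the commutator vanishes). Gronwall then gives
\[
 \norm{v(t)}_{H^\mu}\leq e^{C_\mu \norm{\partial_x v}_{L^1_tL^\infty_x}}\norm{v_0}_{H^\mu}.
\]

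\textbf{Step 2 (existence).} I would construct solutions by a vanishing-viscosity approximation, adding $-\varepsilon\partial_x^2 v_\varepsilon$ to the equation; the parabolic problem is globally well posed in $H^s$ by Duhamel and Picard iteration. Applying the Sobolev embedding $H^s(\d)\hookrightarrow W^{1,\infty}(\d)$ (here one uses $s>3/2$) to the estimate from Step 1 with $\mu=s$ yields the closed differential inequality
\[
 \frac{d}{dt}\norm{v_\varepsilon}_{H^s}\leq C_s\norm{v_\varepsilon}_{H^s}^2,
\]
producing a uniform-in-$\varepsilon$ lifespan $T\sim 1/(r+\norm{\partial_x u_0}_{L^\infty})$ for all $v_0\in \b(u_0,r)$. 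Compactness, together with strong convergence in any $H^{\sigma}$ with $\sigma<s$ obtained from an Aubin--Lions argument, lets me pass to the limit and recover a solution $v\in L^\infty([0,T],H^s)$ that satisfies the equation pointwise. Time continuity into $H^s$ follows from a Bona--Smith regularisation: compare $v$ with the solution $v^\delta$ issued from a mollified $v_0^\delta$; differences in $H^{s-1}$ tend to $0$ as $\delta\to 0$ while $H^s$ norms of $v^\delta$ are equicontinuous in $t$, yielding $v\in C([0,T],H^s)$.

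\textbf{Step 3 (uniqueness and Lipschitz dependence).} The difference $w:=u-v$ satisfies
\[
 \partial_t w + u\partial_x w + w\partial_x v + \abs{D}^{\alpha-1}\partial_x w = 0.
\]
An $L^2$ estimate, using only $\int u\partial_x w\cdot w=-\tfrac12\int(\partial_x u)w^2$ and skew-adjointness of the dispersion, controls $\norm{w(t)}_{L^2}$ by $e^{C\norm{\partial_x(u,v)}_{L^1_tL^\infty}}\norm{w(0)}_{L^2}$. For the $H^s$ estimate, apply $\D^s$, pair with $\D^s w$, and treat the $u\partial_x w$ term as in Step 1; the genuinely new term is $\D^s(w\partial_x v)$. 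Moser's product estimate gives
\[
 \norm{w\partial_x v}_{H^s}\lesssim \norm{w}_{H^s}\norm{\partial_x v}_{L^\infty}+\norm{w}_{L^\infty}\norm{v}_{H^{s+1}},
\]
and $\norm{w}_{L^\infty}\lesssim \norm{w}_{H^s}$. Since $u_0\in H^{s+1}$, Step 1 applied with $\mu=s+1$ to $u$ gives $\norm{u}_{H^{s+1}}\leq e^{Ct\norm{u_0}_{H^{s+1}}}\norm{u_0}_{H^{s+1}}$ on $[0,T]$ (which is how the $t\norm{u_0}_{H^{s+1}}$ factor enters the final bound), and therefore $\norm{v}_{H^{s+1}}$ stays bounded by a similar quantity along the solution $v$. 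Gronwall then produces the stated Lipschitz dependence with the expected loss of one derivative on the reference datum.

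\textbf{Expected main obstacle.} None of the steps requires fine dispersive or paradifferential machinery: the skew-adjointness of $\abs{D}^{\alpha-1}\partial_x$ is the structural input that makes this ``simple'' theory work. The only non-cosmetic point is the Kato--Ponce commutator bound and its tame companion for products, together with the routine but slightly technical Bona--Smith argument needed to upgrade weak continuity in $H^s$ to strong continuity; this is where I expect to spend the most care.
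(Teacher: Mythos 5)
The paper does not actually give a proof of this theorem; it states it as the ``simplest Cauchy theory [\dots] that only uses hyperbolic estimates'' and immediately moves on to the two paradifferential theorems. Your argument is therefore the standard quasilinear-hyperbolic proof that the paper is implicitly referencing, and Steps 1 and 2 (skew-adjointness of the dispersion, Kato--Ponce commutator, vanishing viscosity, Bona--Smith continuity) are the right tools and are correctly executed.

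There is, however, a genuine error in Step 3. You write the difference equation as
\[
\partial_t w + u\partial_x w + w\partial_x v + \abs{D}^{\alpha-1}\partial_x w = 0,
\]
and then estimate $\norm{w\partial_x v}_{H^s}\lesssim \norm{w}_{H^s}\norm{\partial_x v}_{L^\infty}+\norm{w}_{L^\infty}\norm{v}_{H^{s+1}}$, claiming that ``$\norm{v}_{H^{s+1}}$ stays bounded by a similar quantity along the solution $v$.'' This last claim is false: only $u_0$ is assumed to lie in $H^{s+1}$, while $v_0$ is merely in $\b(u_0,r)\subset H^s$, so no bound on $\norm{v}_{H^{s+1}}$ is available and the step breaks down. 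The fix is to choose the other algebraic decomposition of $u\partial_x u - v\partial_x v$, namely
\[
\partial_t w + v\partial_x w + w\partial_x u + \abs{D}^{\alpha-1}\partial_x w = 0.
\]
The transport part $v\partial_x w$ is handled by the Step~1 commutator argument at cost $\norm{\partial_x v}_{L^\infty}\norm{w}_{H^s}$, while Moser's product estimate on the genuinely new term now gives $\norm{w\partial_x u}_{H^s}\lesssim \norm{w}_{H^s}\norm{\partial_x u}_{L^\infty}+\norm{w}_{H^s}\norm{u}_{H^{s+1}}$, and $\norm{u}_{H^{s+1}}$ is controlled by Step~1 at level $\mu = s+1$ precisely because $u_0\in H^{s+1}$. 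This is exactly the source of the $t\norm{u_0}_{H^{s+1}}$ term in the exponent of the stated Lipschitz bound, and it also explains why the extra regularity is put on $u_0$ and not $v_0$ in the hypothesis. With that one swap the argument closes correctly.
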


Equation \eqref{glob CP DBeq_intro_th simple CP_mod on r} is known (see \cite{Saut13}) to be quasi-linear for $\alpha<3 $ that is the flow map is not regular $(C^1)$ and as such it cannot be solved through a fixed point scheme. This lack of regularity comes from the ``bad" low-high frequency interaction $u_{\text{low}}\partial_x u_{\text{high}}$.

Despite of this lack of regularity of the flow map, Tao used in \cite{Tao04} a generalised Cole-Hopf complex transformation to prove global well posedness of the Benjamin-Ono equation in $H^1(\r)$. This technique was extensively used to push down the well posedness threshold to $L^2(\r)$ for the Benjamin-Ono equation in \cite{Ionescu07}. This contrasts the fact that for the Burgers equation, that is $\alpha=1$, the Cauchy problem is ill-posed in $H^s,s\leq \frac{3}{2}$ as shown in \cite{Linares14}.

Thus the general idea is to understand the ``interaction" between the nonlinearity and dispersion. The following quantities are conserved by the flow associated to \eqref{glob CP DBeq_intro_BDP}:
\begin{equation}\label{glob CP DBeq_intro_mass conserv}
\norm{u(t)}_{L^2}=\norm{u_0}_{L^2}, \text{ and, }
\end{equation}

\begin{equation}\label{glob CP DBeq_intro_energy conserv}
H(u)=\int_{\r} \abs{D^{\frac{\alpha-1}{2}}u}^2(t,x)dx+\frac{1}{3}\int_{\r}u^3(t,x)dx=H(u_0).
\end{equation}

By the the Sobolev embedding $H^{\frac{1}{6}}\hookrightarrow L^3$, $H(u)$ is well defined for $\alpha\geq 1+\frac{1}{3}$. Moreover \eqref{glob CP DBeq_intro_BDP} is invariant under the scaling transformation:
\[
u_{\lambda}=\lambda^{\alpha-1}u(\lambda^{\alpha}t,x),
\]
for any positive $\lambda$. We have $\norm{u_\lambda(t,\cdot)}_{\dot{H}^s}=\lambda^{\alpha+s-\frac{3}{2}}\norm{u(\lambda^\alpha t,\cdot)}_{\dot{H}^s}$, thus the critical index corresponding to \eqref{glob CP DBeq_intro_BDP} is $s_c=\frac{3}{2}-\alpha$. In particular, \eqref{glob CP DBeq_intro_BDP} is $L^2$ critical for $\alpha=\frac{3}{2}$.

In the ``low" dispersion case, that is $\alpha \leq 2$ a complete numerical study was carried out by Klein and Saut in \cite{Saut14} and conjectured among other things the following.
\begin{conjecture}\label{glob CP DBeq_intro_conjecture Saut}
\begin{enumerate}
\item For $\alpha \leq 1$ solutions blow up in finite time and do so through a wave breaking scenario, that is $\displaystyle \lim_{t \rightarrow T^*}\norm{u(t)}_{L^\infty}$ stays bounded while $\displaystyle \norm{\partial_x u(t)}_{L^\infty}\rightarrow +\infty$.
\item For $\alpha > 1$ we have global in time existence for small initial data.

\item For $1<\alpha \leq \frac{3}{2}$ large solutions blow up in finite time and do so through a ``dispersive" blow up scenario, that is $\displaystyle \norm{ u(t)}_{L^\infty_x}\rightarrow +\infty$.

\item For $\alpha > \frac{3}{2}$ solutions exist globally in time.
\end{enumerate}
\end{conjecture}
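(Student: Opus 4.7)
The conjecture bundles four genuinely different phenomena -- hyperbolic shock formation, small-data scattering, $L^\infty$ concentration, and large-data global regularity -- across disjoint ranges of $\alpha$. My plan proceeds part by part, using the new continuation criterion of this paper (control by $\norm{D^{2-\alpha}(u^2)}_{L^1_tL^\infty_x}$) as the common tool that sharpens the dichotomy between the wave-breaking and dispersive-blow-up scenarios, together with linear dispersive analysis of the group $e^{-t\partial_x|D|^{\alpha-1}}$.

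For parts (2) and (4) the idea is to close a global bound on the continuation quantity. For $\alpha>1$ the symbol $\xi\abs{\xi}^{\alpha-1}$ has non-degenerate second derivative away from the origin, so a stationary phase/$TT^*$ argument in the vein of Kenig--Ponce--Vega gives Strichartz estimates. For part (2), I would set up a bootstrap: assume $\norm{u}_{L^\infty_t H^s}\leq 2\eps$ on a maximal time interval, and use Strichartz plus a product estimate to bound
\[
\norm{D^{2-\alpha}(u^2)}_{L^1_tL^\infty_x}\lesssim \norm{D^{2-\alpha}u}_{L^2_tL^\infty_x}\norm{u}_{L^2_tL^\infty_x}\lesssim\eps^2,
\]
provided $s$ is large enough for Strichartz to absorb the $2-\alpha$ derivatives; the improved a priori estimate then prevents blow-up. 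For part (4), the conserved energy \eqref{glob CP DBeq_intro_energy conserv} is coercive precisely when $\alpha>\tfrac{3}{2}$: a Gagliardo--Nirenberg computation gives $\norm{u}_{L^3}^3\lesssim \norm{u}_{L^2}^{3-1/(\alpha-1)}\norm{D^{(\alpha-1)/2}u}_{L^2}^{1/(\alpha-1)}$, and the exponent $1/(\alpha-1)$ is strictly less than $2$ iff $\alpha>\tfrac{3}{2}$, matching the conjectural threshold. This gives a uniform $\dot H^{(\alpha-1)/2}$ bound, which combined with Strichartz yields a global bound on $\norm{D^{2-\alpha}(u^2)}_{L^1_tL^\infty_x}$ and closes.

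For parts (1) and (3) the mechanisms are opposite. In part (1), $\alpha\leq 1$, I would differentiate \eqref{glob CP DBeq_intro_BDP} in $x$ and follow $w=\partial_x u$ along the characteristics $\dot x=u$, obtaining schematically $\dot w=-w^2-\partial_x^2\abs{D}^{\alpha-1}u$. For $\alpha\leq 1$ the dispersive term is of order at most $2-\alpha+1\leq 2$ applied to $u$ but is linear in $u$, while the Riccati term $w^2$ is quadratic in the possibly unbounded $w$; a Lax-type Riccati argument with a quantitative Kato smoothing to absorb the dispersive correction then forces $w\to-\infty$ in finite time while $\norm{u}_{L^\infty}$ stays bounded, which is the wave-breaking scenario. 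In part (3), $1<\alpha\leq \tfrac{3}{2}$, the new continuation criterion becomes decisive: any blow-up must make $\norm{D^{2-\alpha}(u^2)}_{L^1_tL^\infty_x}=+\infty$, and because $2-\alpha<1$ the lost derivative is strictly weaker than one full $\partial_x$, so a commutator/paraproduct decomposition should allow us to trade a bounded $\norm{u(t)}_{L^\infty}$ against a weaker norm of $\partial_x u$, contradicting the continuation failure unless $\norm{u(t)}_{L^\infty}\to+\infty$. The scaling $u_\lambda=\lambda^{\alpha-1}u(\lambda^\alpha t,x)$ is mass-critical at $\alpha=\tfrac{3}{2}$ and mass-supercritical below, so a concentration-compactness/profile decomposition à la $L^2$-critical NLS should then exhibit an explicit blowing-up profile.

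The main obstacle is part (3). Even with the paper's improved continuation criterion, converting the exclusion of wave breaking into a positive proof of $L^\infty$ blow-up requires a variational characterization of a would-be blow-up profile -- the analogue of the NLS ground state -- and no such object is known for general $\alpha\in(1,2)$ outside the integrable cases $\alpha=2,3$, so the Birkhoff-coordinate approach of G\'erard--Kappeler--Topalov is unavailable. A realistic intermediate target, short of the full conjecture, would be to show that under the additional a priori assumption that $u$ remains in a small neighbourhood in $\dot H^{(\alpha-1)/2}$ of some suitable profile, the blow-up must be of $L^\infty$-concentration type; the paper's paradifferential gauge transform is exactly the tool that makes such a modulation analysis conceivable, but executing it is the crux of the open problem.
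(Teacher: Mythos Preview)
The statement you are attempting to prove is labelled \emph{Conjecture} in the paper; the paper does not prove it, and indeed presents it as the open problem motivating the work. There is therefore no proof in the paper to compare your proposal against. Your proposal is a heuristic outline rather than a proof, and you yourself flag part (3) as the crux of an open problem. Below I point out the concrete gaps.

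First, the continuation criterion you repeatedly invoke, namely control of $H^s$ by $\norm{D^{2-\alpha}(u^2)}_{L^1_tL^\infty_x}$, is proved in this paper \emph{only for the paralinearised equation} $\partial_t u+T_u\partial_x u+\partial_x|D|^{\alpha-1}u=0$, not for the full equation \eqref{glob CP DBeq_intro_BDP}. The paper is explicit about this: the dropped terms $T_{\partial_x u}u$ and $\tfrac12\partial_x R(u,u)$ are to be treated in Bourgain-type spaces in forthcoming work, and the remark following Theorem~\ref{glob CP DBeq_intro_Para th CP imprv enrgy est} says the a~priori estimate alone is not enough even to improve local well-posedness. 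So every step in parts (2), (3), (4) that feeds the continuation criterion back into \eqref{glob CP DBeq_intro_BDP} is, at present, unjustified.

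Second, even accepting the criterion, your inferences from it are too quick. In part~(3) you argue that if $\norm{u}_{L^\infty}$ stayed bounded one could ``trade'' it against a weaker norm of $\partial_x u$ to keep $\norm{D^{2-\alpha}(u^2)}_{L^1_tL^\infty_x}$ finite; but $D^{2-\alpha}(u^2)$ is genuinely a fractional derivative of a product and is \emph{not} controlled by $\norm{u}_{L^\infty}$ alone (a bounded $u$ developing a cusp already makes it diverge), so the exclusion of wave breaking does not follow. In part~(4), the Gagliardo--Nirenberg step giving a uniform $\dot H^{(\alpha-1)/2}$ bound is fine, but passing from that very low regularity level to a global-in-time bound on $\norm{D^{2-\alpha}(u^2)}_{L^1_tL^\infty_x}$ via Strichartz is precisely the hard step: the best known result in this direction (Molinet--Pilod--Vento) only reaches $\alpha>1+\tfrac{6}{7}$, not $\alpha>\tfrac32$. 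In part~(1), the Riccati-along-characteristics sketch ignores that $\partial_x^2|D|^{\alpha-1}u$ is nonlocal and of the same order as $(\partial_x u)^2$ when measured in $L^\infty$; controlling it by ``Kato smoothing'' is exactly what is not known for $\tfrac23\le\alpha\le 1$ (the cited results of Hur and Hur--Tao stop at $\alpha<\tfrac23$). In short, each part of your outline names the right mechanism but replaces the actual obstruction by a verb.
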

In \cite{Castro10} and \cite{Hur12} blow up is proven for $\alpha< 1$ and in \cite{Hur17,Hur18} it is shown that for $\alpha <\frac{2}{3}$ the only possible blow up scenario is a wave breaking one, a simpler proof can be found in \cite{Saut20'}. 

In \cite{Linares14} using Strichartz estimate well posedness is proved for $s>\frac{3}{2}-\frac{3(\alpha-1)}{8}$ and $\alpha>1$, proving that even for very low dispersion the threshold of well posedness can be improved which again contrasts with the Burgers equation ($\alpha=1$) where the equation is shown is shown  to be ill-posed for $s=\frac{3}{2}$ in \cite{Linares14}.

This was improved upon in \cite{Molinet18} using an adapted version of the ``I-method" and refined Strichartz estimates in co-normal Bourgain type spaces. They proved well posedness for $s>\frac{3}{2}-\frac{5(\alpha-1)}{4}$ and $\alpha>1$, thus proving by the conservation of $H(u)$ and scaling, global well posedness for $\alpha>1+\frac{6}{7}$. This is the first and to the authors knowledge only result proving global existing results for $\alpha<2$.

Finally for $\alpha \geq 2$ the Cauchy problem is much better understood. For the Benjamin-Ono equation on $\r$, to the authors knowledge the best known result is $L^2$  global well-posedness derived in \cite{Ionescu07}. Recently Patrick G\'erard, Thomas Kappeler and Peter Topalov proved in \cite{Gerard20} global well-posedness for the periodic Benjamin-Ono equation all the way down to $s>-\frac{1}{2}$ and ill-posedness for $s< -\frac{1}{2}=s_c$, the critical Sobolev exponent. For $\alpha\in ]2,3[$ on the real line, the best known local well posedness result is for $s\geq \frac{3}{4}(2-\alpha)$ under a low frequency condition given in \cite{Herr07} and in $L^2$ without the low frequency condition in \cite{Herr10}. For the KdV equation, for both the periodic and real line cases the Cauchy problem is globally well posed on $H^{-1}(\r)$ as shown in \cite{Kappler06,Killip19}, which is the best possible well posedness result, that is the KdV equation is ill-posed for $s<-1$ as shown in \cite{Molinet12}.

The remarkable well-posedness results for $\alpha=\set{2,3}$ uses the integrability of the Benjamin-Ono equation and the KdV equation and the construction of  Birkhoff coordinates and thus cannot be extended to the case $\alpha\neq \set{2,3}$.

\begin{remark}
It's interesting to compare \eqref{glob CP DBeq_intro_BDP} to the ``fractal" Burgers equation, that is the Burgers equation with a dissipative term:
\begin{equation}\label{glob CP DBeq_intro_Bdis eq}
\partial_t u+u\partial_x u+(-\Delta)^{\frac{\alpha}{2}}u=0, \ \alpha \geq 0.
\end{equation}
For $\alpha =2$, \eqref{glob CP DBeq_intro_Bdis eq} is the usual Hopf equation. The local and global Cauchy problem associated to \eqref{glob CP DBeq_intro_Bdis eq} is very well understood, we refer to \cite{Kieslev08} for a complete solution to the problem. For $\alpha<1$, large solution of \eqref{glob CP DBeq_intro_Bdis eq} blow up in finite time and that through a wave breaking mechanism. For $\alpha \geq 1$ solutions exist globally in time. 

This contrasts with \eqref{glob CP DBeq_intro_BDP} in several directions:
\begin{itemize}
\item the change of local/global well posedness is conjectured to happen at $\alpha= \frac{3}{2}$ and not $1$,
\item the conjectured existence of a new nonlinear blow up regime for $\alpha \in ]1,\frac{3}{2}]$,
\item the drastic difference in the behavior of the global Cauchy problem for \eqref{glob CP DBeq_intro_BDP} and  \eqref{glob CP DBeq_intro_Bdis eq} for $\alpha=1$.
\end{itemize} 
\end{remark}

In this paper we will look more closely to the equation:
\begin{equation*}
\partial_t u+T_u\partial_x u+\partial_x \abs{D}^{\alpha-1}u=0,\ u_0\in H^s.
\end{equation*}
The term $T_u \cdot $ is the called the paraproduct with $u$. A rigorous review of paraproducts, paradifferential operators and paradifferential calculus is given in Appendix \ref{paracomposition_Notions of microlocal analysis_Paradifferential Calculus}. As the 2 main theorems in this paper use extensively this language we give an intuitive interpretation of those concepts in the following paragraph so that the reader unfamiliar with this language can get a good grasp of the statements without having to go through Appendix \ref{paracomposition_Notions of microlocal analysis_Paradifferential Calculus} first.
 
\subsubsection*{\textbf{$\bullet$ Paraproducts and Paradifferential operators}} 
For the sake of this discussion let us pretend that $\partial_x$ is left-invertible with a choice of $\partial_x^{-1}$ that acts continuously from $H^s$ to $H^{s+1}$. We follow here analogous ideas to the ones presented by Shnirelman in \cite{Shnirelman05}. One way to define the paraproduct of two functions $f,g\in H^s$ with $s$ sufficiently large is: we differentiate $fg$ $k$ times, using the Leibniz formula, and then restore the function $fg$ by the $k$-th power of $\partial_x^{-1}$:
 \begin{align*}
 fg&=\partial_x^{-k}\partial_x^{k}(fg)\\
 	&=\partial_x^{-k}\big(g\partial_x^k f+k\partial_x g\partial_x^{k-1} f+\dots+k\partial_x f\partial_x^{k-1} g+g\partial_x^k f  \big)\\
 	&=T_g f+T_fg+R,
 \end{align*}
 where,
 \[T_gf=\partial_x^{-k}\big(g\partial_x^k f\big), \ \ T_fg=\partial_x^{-k}\big(f\partial_x^k g\big),\]
 and $R$ is the sum of all remaining terms. The key observation is that if $s>\frac{1}{2}+k$, then $g \mapsto T_fg$ is a continuous operator in $H^s$ for $f  \in H^{s-k}$. The remainder $R$ is a continuous bilinear operator from $H^s$ to $H^{s+1}$. The operator $T_fg$ is called the paraproduct of $g$ and $f$ and can be interpreted as follows. The term $T_fg$ takes into play high frequencies of $g$ compared to those of $f$ and demands more regularity in $g\in H^s$ than $f \in H^{s-k}$ thus the term $T_fg$ bears the "singularities" brought on by $g$ in the product $fg$. Symmetrically $T_gf$ bears the "singularities" brought on by $f$ in the product $fg$ and the remainder $R$ is a smoother function ($H^{s+1}$) and does not contribute to the main singularities of the product. Notice that this definition uses a "general" heuristic from PDE that is the worst terms are the highest order terms (ones involving the highest order of differentiation). Now to make such a definition rigorous, we quantify this frequency comparison. The starting point is the product formula:
 \[
 fg(x)=\frac{1}{(2\pi)^2}\int_{\r}\int_{\r}e^{ix\cdot(\xi_1+\xi_2)}\fr(f)(\xi_1)\fr(g)(\xi_2)d\xi_1d\xi_2.
 \]
 Now if for some parameters $B>1,b>0$ one defines a cut-off function:
 \[
\psi^{B,b}(\eta,\xi)=0 \text{ when }
\abs{\xi}< B\abs{\eta}+b,
\text{ and }
\psi^{B,b}(\eta,\xi)=1 \text{ when } \abs{\xi}>B\abs{\eta}+b+1,
\]
then one can rigorously define the paraproduct as
\[
T^{B,b}_{g}f(x)=T_{g}f(x)=\frac{1}{(2\pi)^2}\int_{\r}\int_{\r}\psi^{B,b}(\xi_2,\xi_1)e^{ix\cdot(\xi_1+\xi_2)}\fr(f)(\xi_1)\fr(g)(\xi_2)d\xi_1d\xi_2.
\]

 To get a good intuition of a paradifferential operator $T_p$ with symbol $p\in \Gamma^\beta_\rho$, as a first gross approximation, one can think of $T_p$ as the composition of a paraproduct $T_f$ with Fourier multiplier $m(D)$, that is:
 \[
 T_p\approx T_f m(D), \text{ with } f\in W^{\rho,\infty} \text{ and }m \text{ is of order }\beta.
 \]
 Indeed following Coifman and Meyer's symbol reduction Proposition $5$ of \cite{Coifman78}, one can show that linear combinations of composition of a paraproduct with a Fourier multiplier are dense in the space of paradifferential operators.\\

Getting back to the main problem:
\begin{equation}\label{glob CP DBeq_intro_BDP para}
\partial_t u+T_u\partial_x u+\partial_x \abs{D}^{\alpha-1}u=0,\ u_0\in H^s.
\end{equation}
The modification made to pass from \eqref{glob CP DBeq_intro_BDP} to \eqref{glob CP DBeq_intro_BDP para} is that we dropped the the remainder terms:
 \[
\frac{\partial_x R(u,u)}{2} \text{ and } T_{\partial_x u}u.
\]
The motivations to study the paralinearised version of the equations are the following:
\begin{enumerate}
\item Equation \eqref{glob CP DBeq_intro_BDP para} still contains the main ``bad" term $u_{low}\partial_x u_{high}$. As remarked in \cite{Tao04} and \cite{Burq08} this is the main term obstructing straightforward estimates in $X^{0^+,\frac{1}{2}^+}$ for $\alpha=2$.

\item Indeed looking through the literature \cite{Burq08} and \cite{Ionescu07}, the neglected terms can be treated in localised Besov-Bourgain type spaces in our threshold of regularity. By contrast the results on \eqref{glob CP DBeq_intro_BDP para} are simpler to write because they can be completely described in the usual Sobolev spaces.
\end{enumerate}
Thus to keep our presentation clear and put the key ideas forward in treating the ``worst" terms, we opted to present in this paper the results on the paralinearised version of the equation \eqref{glob CP DBeq_intro_BDP para} and give the results on the ``full" equation \eqref{glob CP DBeq_intro_BDP} in Bourgain type spaces in a forthcoming work where the action of the gauge transform used here will be studied in such spaces.

Now we give the first theorem of this paper.
\begin{theorem}\label{glob CP DBeq_intro_Para th CP imprv enrgy est}
Consider two real numbers $ \alpha\in ]1,2[$, $s\in ]1+\frac{1}{2},+\infty[$. Then for all $v_0\in H^s(\d)$ and all $r>0$ there exists $C_s>0$ such that for $0<T< \frac{C_s}{r+\norm{\partial_x u_0}_{L^\infty(\d)}}$ and all $u_0$ in the ball $\b(v_0,r)\subset  H^{s}(\d)$ there exists a unique $u\in C([0,T],H^s(\d))$ solving the Cauchy problem:
\begin{equation}\label{glob CP DBeq_intro_Para th CP imprv enrgy est_ eq BDP para avec cutoff}
\partial_t u+T^{B,b}_u\partial_x u +\partial_x \abs{D}^{\alpha-1}u=0,\ u_0\in H^s.
\end{equation}

Moreover there exist $B>1$ such that for $\norm{u_0}_{H^{\prt{\frac{3}{2}-\alpha}^+}}$ sufficiently small we have the estimate:
 \begin{equation}\label{glob CP DBeq_intro_Para th CP imprv enrgy est_energy est}
\norm{u(t)}_{H^s}\leq e^{C\norm{D^{2-\alpha}\left(u^2\right)}_{L^1_tL^{\infty}_x}}\norm{u_0}_{H^s}, \text{ for } 1<\alpha<2.
\end{equation}
\end{theorem}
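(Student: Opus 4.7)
The plan is to reduce the estimate to an energy inequality on a paradifferential gauge transform of $u$. For the local existence and uniqueness part one uses the standard paradifferential quasi-linear scheme: solve the regularised equation
\[
\partial_t u_\eps+T^{B,b}_{J_\eps u_\eps}\partial_x u_\eps+\partial_x \abs{D}^{\alpha-1}u_\eps=0,
\]
where $J_\eps$ is a Friedrichs mollifier, as an ODE in $H^s$, derive uniform $H^s$ bounds on a common time interval (exploiting $s>3/2$ to put $\partial_x u_\eps$ in $L^\infty$), and pass to the limit by compactness; uniqueness and continuity of the flow follow from an $L^2$-estimate on the difference together with a Bona--Smith type argument.

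The heart of the theorem is the improved a priori inequality \eqref{glob CP DBeq_intro_Para th CP imprv enrgy est_energy est}. Following \cite{Ayman20}, I would introduce the paradifferential complex Cole--Hopf type gauge
\[
W := T_{e^{iq(u)}}\,u,
\]
where $q(t,x,\xi)$ is a real-valued symbol, of order $2-\alpha$ in $\xi$ and linear in $u$, designed to cancel the worst interaction $T_u\partial_x u$. Writing out $\partial_t W+\partial_x\abs{D}^{\alpha-1}W$ and invoking the paradifferential symbolic calculus, the leading contribution from the commutator $[\partial_x\abs{D}^{\alpha-1},T_{e^{iq}}]$ is a paradifferential operator with principal symbol $i\alpha\abs{\xi}^{\alpha-1}(\partial_x q)\,e^{iq}$; choosing $q$ so that this cancels $-T_{e^{iq}}T_u\partial_x$ at the principal level forces $\partial_x q=-\xi\abs{\xi}^{1-\alpha}u/\alpha$ modulo subprincipal corrections, which is consistent with $q$ being of order $2-\alpha$ in $\xi$.

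With this choice, $W$ satisfies a semilinear paradifferential equation
\[
\partial_t W + \partial_x\abs{D}^{\alpha-1} W = T_{R(u)} W + \mathcal{R},
\]
where $R(u)$ is a symbol of order $2-\alpha$, quadratic in $u$, and $\mathcal{R}$ is a smoother remainder bounded in $H^s$. The quadratic character of $R(u)$ arises from the second order term in the Taylor expansion of $e^{iq}$, and its $L^\infty$-norm in $\xi$ is controlled precisely by $\norm{D^{2-\alpha}(u^2)}_{L^\infty_x}$. Since $\partial_x\abs{D}^{\alpha-1}$ is skew-adjoint and $T_{R(u)}$ is bounded on $H^s$ with operator norm $\lesssim \norm{D^{2-\alpha}(u^2)}_{L^\infty_x}$, a direct $H^s$ energy estimate and Gronwall yield
\[
\norm{W(t)}_{H^s}\leq \exp\!\left(C\norm{D^{2-\alpha}(u^2)}_{L^1_tL^{\infty}_x}\right)\norm{W(0)}_{H^s}.
\]

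It remains to invert the gauge. This is where the smallness hypothesis on $\norm{u_0}_{H^{(3/2-\alpha)^+}}$ enters: this is precisely the scaling-critical Sobolev norm controlling the $\xi$-uniform symbolic seminorms of $q(u)$, so that $e^{iq(u)}$ is a small perturbation of $1$ and $T_{e^{iq(u)}}$ admits a paradifferential parametrix of the form $T_{e^{-iq(u)}}$ modulo smoothing operators; this gives $\norm{u}_{H^s}\simeq\norm{W}_{H^s}$ uniformly in $t$ and closes the argument. The main technical obstacle is the symbolic bookkeeping: every subprincipal remainder produced by the commutator and composition expansions must be reabsorbed into $\norm{D^{2-\alpha}(u^2)}_{L^\infty_x}$ rather than the hyperbolic $\norm{\partial_x u}_{L^\infty_x}$, which requires exploiting the quadratic structure of the phase and a careful tuning of the cutoff parameter $B$ so that high-high frequency interactions stay under control.
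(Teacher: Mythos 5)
There is a genuine gap, and it is exactly the one the paper flags as the heart of the matter. You reach for the explicit Cole--Hopf symbol $\partial_x q=-\xi\abs{\xi}^{1-\alpha}u/\alpha$ and assert that after the gauge the remainder is an operator of order $2-\alpha$ whose size is controlled by $\norm{D^{2-\alpha}(u^2)}_{L^\infty_x}$. The paper explicitly observes that this naive choice of $q$ \emph{does not} work at the regularity $\norm{u}_{W^{2-\alpha,\infty}}$: the second-order term in the Baker--Campbell--Hausdorff expansion of the conjugated dispersion, namely $\int_0^1 e^{-irT_q}\crch{T_q,T_{u\xi}+T_{u\xi}^*}e^{irT_q}\,dr$, has the wrong order. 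By symbolic calculus it sits in $\Gamma^{3-\alpha}_0$, so it is \emph{not} smaller than the term you started with; the ``exceptional'' algebraic cancellation that kills it for $\alpha=2$ (because $\partial_\xi q=0$ there, the Benjamin--Ono case, see \cite{Herr10}) has no analogue for fractional $\alpha$. Claiming, as you do, that the quadratic-in-$u$ remainder ``arises from the second-order term in the Taylor expansion of $e^{iq}$'' overlooks precisely this obstruction: that second-order term is \emph{not} perturbative in the $\norm{D^{2-\alpha}(u^2)}_{L^\infty}$ topology.

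The paper overcomes this with two ingredients, both absent from your sketch. First, a \emph{normal form transform} (following Ifrim--Tataru) is performed \emph{before} the gauge: setting $v=\D^s u$ and $w=v+\Pi_{\chi_1}(u,D^{1-\alpha}v)$ with a Coifman--Meyer multiplier built from the resonance function $\Omega_\alpha$, the commutator and antisymmetric terms on the right-hand side are absorbed and replaced by a genuinely quadratic source $\Pi_{\chi_2}(T_u\partial_x u, D^{1-\alpha}v)$ which is bounded by $\norm{\partial_x D^{1-\alpha}(u^2)}_{L^\infty}\norm{v}_{L^2}$. This, not the Taylor expansion of the phase, is where the quadratic structure in $u$ enters. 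Second, and crucially, $p$ is constructed \emph{implicitly} via Theorem~\ref{glob CP DBeq_sec impl constr symb_th constr comm} to solve $[T_{ip},T_{i\xi\abs{\xi}^{\alpha-1}}]=-iT_{\sigma_{u\xi}+(\sigma_{u\xi})^*}$ exactly, and the uniqueness statement (Corollary~\ref{glob CP DBeq_sec impl constr symb_cor lem est comm constr}) forces $T_p$ to be $L^2$ self-adjoint because both sides of the defining equation are skew-adjoint. Self-adjointness of $T_p$ is then precisely what makes the otherwise fatal BCH remainder $\int_0^1 e^{-irT_p}\crch{T_p,T_{\sigma_{u\xi}+(\sigma_{u\xi})^*}}e^{irT_p}\,dr$ an $L^2$ skew-adjoint operator, which therefore contributes nothing to $\frac{d}{dt}\norm{w}_{L^2}^2$ regardless of its order. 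The smallness of $\norm{u_0}_{H^{(3/2-\alpha)^+}}$ is finally used to invert the normal form transform ($\norm{w}_{L^2}\simeq\norm{v}_{L^2}$), not the gauge. Your outline, if carried out as written, would produce a remainder of order $3-\alpha$ that cannot be reabsorbed, so the estimate would degenerate back to the hyperbolic $\norm{\partial_x u}_{L^1_tL^\infty_x}$ control.
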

\vspace{0,5cm}
Some comments on the previous theorem are in order:
\begin{itemize}
\item the apriori estimate \eqref{glob CP DBeq_intro_Para th CP imprv enrgy est_energy est} is not enough to improve upon the local well-posedness theory, indeed we need an extra estimate on the difference of two solutions. A straightforward computation shows that taking the difference of two solutions $u-v$ we get:
\begin{multline*}
\partial_t(u-v)+T_{i\xi \abs{\xi}^{\alpha-1}}(u-v)+\underbrace{\partial_x [T_u (u-v)]-T_{\frac{\partial_x u}{2}}(u-v)}_{(1)}\\+\underbrace{\partial_x [T_{u-v} v]-T_{\frac{\partial_x {u-v}}{2}}v}_{(2)}=0.
\end{multline*}
Term $(1)$ can be treated using the gauge transform but term $(2)$ is not a paradifferential operator in the variable $u-v$ and can not be treated in our current restricted paradifferential-Sobolev space setting. Indeed term $(2)$ has the same structure as the residual terms we dropped to get equation \eqref{glob CP DBeq_intro_BDP para} and has to be treated in the Besov-Bourgain-type spaces which is not done here.

\item The analogue of estimate \eqref{glob CP DBeq_intro_Para th CP imprv enrgy est_energy est} is still valid for $\alpha\geq2$ but some care is needed as the ``negative" H\"older spaces should be replaced by Zygmund spaces. In the real line case the standard Strichartz estimates give for $\alpha=2$, that is the Benjamin-Ono equation, the analogue of the well-posedness result of Burq and Planchon \cite{Burq08} and for $\alpha \in [2,3]$ the analogue of \cite{Herr07}.
\end{itemize}

We turn to the conjugation theorem for $\alpha\in [2,3]$.
\begin{theorem}\label{glob CP DBeq_intro_Para th conjg }
Consider two real numbers $ \alpha\in ]2,3[$, $s\in ]\frac{1}{2}+2-\alpha,+\infty[$. Then there exist $T>0$, $B>1$ and $r>0$ such that for all $u_0$ in the ball $\b(0,r)\subset  H^{s}(\d)$ there exists a unique $u\in C([0,T],H^s(\d))$ solving the Cauchy problem:
\begin{equation}\label{glob CP DBeq_intro_Para th conjg_eq1}
\partial_t u+T^{B,b}_{u}\partial_x u +\partial_x \abs{D}^{\alpha-1}u=0,\ u_0\in H^s.
\end{equation}
The flow map $v_0 \mapsto v$ is continuous from $\b(0,r)$ to $C([0,T],H^s(\d))$.

Moreover there exists a paradifferential operator $T_{p(u)}$ of order $0$ with \[p\in C\prt{[0,T],C^{2} \Gamma_1^{0}(\d)} \text{ and } \partial^{1+j}_t p \in C\prt{[0,T],C^{2} \Gamma_0^{(j+1)\alpha-1}(\d)}, \] where $C^{2}\Gamma_0^1$ are the symbol classes with limited  regularity in the frequency variable  defined in \ref{glob CP DBeq_sec impl constr symb_th constr comm_proof_def Banach scale symb} such that
 \begin{equation}\label{glob CP DBeq_intro_Para th conjg_eq2}
\partial_t \left[T^{B,b}_{e^{iT^{B,b}_{p(u)}}}u\right]+ \partial_x \abs{D}^{\alpha-1}\left[T^{B,b}_{e^{iT_{p(u)}}} u\right]=T^{\frac{B^2}{2B+1},b}_{R(u)}u,
\end{equation}
with $R\in C\prt{[0,T],C^{1} \Gamma_0^{0}(\d)}$.
\end{theorem}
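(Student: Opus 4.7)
The plan is to construct the symbol $p(u)$ by imposing that the paradifferential gauge $\Phi(u) := T^{B,b}_{e^{iT^{B,b}_{p(u)}}}$ cancels, up to operators of order zero, the bad low-high transport term $T^{B,b}_u \partial_x u$ in \eqref{glob CP DBeq_intro_Para th conjg_eq1}. Granted such a transform, the existence and continuity of the flow map at the low regularity $s > \tfrac{1}{2} + 2-\alpha$ will follow from an $L^2$-type energy estimate on the semi-linear equation \eqref{glob CP DBeq_intro_Para th conjg_eq2} (whose right-hand side is only of order zero in $u$), combined with the paradifferential invertibility of $\Phi(u)$ and a Bona-Smith regularisation, exactly as in the scheme developed in \cite{Ayman20}. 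The genuinely new content is therefore the explicit construction of $p(u)$ and the identification of the remainder $R(u)$.

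\textbf{Choice of the symbol.} I would first apply $\Phi(u)$ to \eqref{glob CP DBeq_intro_Para th conjg_eq1} and expand
\begin{equation*}
(\partial_t + \partial_x\abs{D}^{\alpha-1})\,\Phi(u)u \;=\; (\partial_t\Phi(u))u \;+\; [\partial_x\abs{D}^{\alpha-1},\,\Phi(u)]\,u \;-\; \Phi(u)\,T^{B,b}_u \partial_x u.
\end{equation*}
Using the symbolic calculus recalled in Appendix \ref{paracomposition_Notions of microlocal analysis_Paradifferential Calculus}, the principal symbols of the two worst contributions are
\begin{equation*}
\sigma\bigl([\partial_x\abs{D}^{\alpha-1},\,T^{B,b}_{e^{iT_{p}}}]\bigr) \;\equiv\; i\alpha|\xi|^{\alpha-1}\,e^{ip}\,\partial_x p,\qquad \sigma\bigl(\Phi(u)\,T^{B,b}_u \partial_x\bigr) \;\equiv\; i\xi\,u\,e^{ip},
\end{equation*}
each modulo symbols of strictly lower order. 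Matching them forces the algebraic relation $\alpha|\xi|^{\alpha-1}\partial_x p(u) = \xi\,u$, which after inverting $\partial_x$ in $x$ (possible on $\t$ on mean-zero components and on $\r$ after a standard low-frequency truncation in $\xi$) gives the explicit choice
\begin{equation*}
p(u)(x,\xi) \;=\; \frac{\operatorname{sgn}(\xi)\,|\xi|^{2-\alpha}}{\alpha}\,\partial_x^{-1}u(x),
\end{equation*}
smoothly truncated near $\xi=0$.

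\textbf{Symbol classes and remainder.} For $\alpha \in\,]2,3[$ the factor $|\xi|^{2-\alpha}$ is of negative order and has two bounded derivatives in $\xi$ for $|\xi|\geq 1$, so $p(u) \in C^2\Gamma^0_1$ as soon as $\partial_x^{-1}u$ is Lipschitz, a condition implied by $u \in H^s \hookrightarrow L^\infty$ at the claimed regularity. Substituting the equation recursively into $\partial_t p(u)$ and using $\partial_t u = -T^{B,b}_u\partial_x u - \partial_x\abs{D}^{\alpha-1}u$ yields by induction the announced estimate $\partial^{1+j}_t p(u) \in C^2 \Gamma_0^{(j+1)\alpha-1}$, since each time derivative morally costs one application of $\partial_x\abs{D}^{\alpha-1}$. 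Plugging $p(u)$ back into the conjugated equation and collecting the sub-principal terms produced by the calculus --- the order-$(\alpha-1)$ contribution from the commutator expansion, the order-$0$ contribution from the paraproduct composition, the term $(\partial_t \Phi(u))u$, and the frequency tails coming from $\psi^{B,b}$ --- identifies the right-hand side as $T^{B^2/(2B+1),b}_{R(u)}u$ with $R(u) \in C^1\Gamma^0_0$.

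\textbf{Main obstacles.} The principal difficulty is twofold. First, $e^{iT_{p(u)}}$ has to be interpreted as the operator exponential of the paradifferential operator $iT_{p(u)}$, whose symbol coincides with $e^{ip(u)}$ only modulo sub-principal errors; the required identification will be handled by a Duhamel expansion that exploits the fact that $p(u)$ is itself of negative order $2-\alpha$, so iterated commutators contribute at strictly lower orders. Second, the cutoff constants $B,b$ must be tuned with care: composing $T^{B,b}_{e^{iT^{B,b}_{p(u)}}}$ with $T^{B,b}_u\partial_x$ produces frequency windows that no longer sit inside the original $\psi^{B,b}$ cone, which is exactly the reason for the enlarged constant $\frac{B^2}{2B+1}$ appearing in the statement. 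Tracking how these constants propagate through the paradifferential calculus, and ensuring that the time-derivative orders close self-consistently via the equation, is the main technical content of the proof.
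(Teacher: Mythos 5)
Your proposal stumbles at the single most delicate point of the theorem, and it is precisely the point the paper spends Section \ref{glob CP DBeq_sec impl constr symb} addressing.

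You choose $p(u)(x,\xi)=\frac{\operatorname{sgn}\xi\,|\xi|^{2-\alpha}}{\alpha}\partial_x^{-1}u$, i.e.\ the explicit Cole--Hopf leading-order parametrix, and then in the ``Symbol classes and remainder'' paragraph you list $(\partial_t\Phi(u))u$ as one of several ``sub-principal terms'' that together form $T_{R(u)}u$ with $R(u)\in C^1\Gamma^0_0$. This is not correct at the claimed regularity. Differentiating your $p$ in time gives $\partial_t p = \frac{\operatorname{sgn}\xi\,|\xi|^{2-\alpha}}{\alpha}\partial_x^{-1}\partial_t u$, and substituting the equation introduces $\partial_x^{-1}\partial_x|D|^{\alpha-1}u=|D|^{\alpha-1}u$. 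At the threshold $u\in C_*^{(2-\alpha)^+}$, $|D|^{\alpha-1}u$ lies only in $C_*^{(3-2\alpha)^+}$, which is not $L^\infty$; equivalently $\partial_t e^{iT_{p}}$ is still an operator of order $1$, not $0$, so the term $(\partial_t\Phi(u))u$ does \emph{not} get absorbed into $R(u)$. The paper says this explicitly (``At our threshold of regularity $\partial_t e^{iT_p}$ is still of order $1$ thus we don't have a gain on the order of the operator''), and it is precisely to circumvent this obstruction that the symbol $p$ is \emph{not} taken to be the explicit Cole--Hopf formula. Instead it is constructed implicitly by Theorem \ref{glob CP DBeq_sec impl constr symb_thm constr gauge trsf with time deriv}, so that $p$ simultaneously solves
\[
-\partial_t\sigma^{B,b}_{e^{ip}_\otimes}+\sigma^{B,b}_{e^{ip}_\otimes}\otimes i\xi|\xi|^{\alpha-1}-i\xi|\xi|^{\alpha-1}\otimes\sigma^{B,b}_{e^{ip}_\otimes}=-\sigma^{B,b}_{\sigma^{B,b}_{e^{ip}_\otimes}\otimes iu\xi},
\]
an equation that includes the $\partial_t$ term to be cancelled. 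Producing such a $p$ requires the linear time-dependent inversion of Theorem \ref{glob CP DBeq_sec impl constr symb_th constr comm and time deriv} (the geometric series over iterated time derivatives, controlled via the tameness hypothesis $M_0^{\beta+j\alpha}(\partial^j_t\partial_\xi^ka;0)\leq CK^j$), and a local implicit function theorem in the Banach spaces $C^k\Gamma^m_\w$ together with a bootstrap in $\xi$-regularity. This is the genuine content of Theorems \ref{glob CP DBeq_sec impl constr symb_th constr comm and time deriv}, \ref{glob CP DBeq_sec impl constr symb_thm constr gauge trsf with time deriv} and Corollary \ref{glob CP DBeq_sec impl constr symb_thm constr gauge trsf with time deriv_corollary linearilised pbm}, none of which can be replaced by the bare first-order symbol matching $\alpha|\xi|^{\alpha-1}\partial_x p = \xi u$.

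Two further remarks. First, your principal-symbol computation alone produces $p$ with \emph{one} gain of order (matching the leading order), but the theorem needs a remainder of order exactly $0$; this requires resumming the entire Baker--Campbell--Hausdorff expansion of the conjugation, which the paper carries out through the machinery of Proposition \ref{BCH formula_def para hyperbolic flow_prop com-conjg} rather than through an ad hoc Duhamel argument. Second, the stated regularity $\partial_t^{1+j}p\in C^2\Gamma_0^{(j+1)\alpha-1}$ is not a consequence of ``substituting the equation recursively'' into the explicit formula; it reflects that the implicitly defined $p$ absorbs the growing time-derivative orders into the frequency order, and proving it requires the iterative estimate $M_0^{(j+1)\alpha-1}(\partial^j_t\sigma^{B,b}_{iu\xi};0)\leq C\|u\|_{C_*^{(2-\alpha)^+}}(2^j+1)$ that feeds the Nash--Moser series. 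So the gap is not cosmetic: without the implicit construction, the transform you propose does not semi-linearise the equation at the claimed regularity.
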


The method used here can be pushed to prove that $p$ is in $C^k\Gamma_1^0(\d)_{k\in \n}$ for all $k \in \n$, with a smaller radius $r_k>0$ in Theorem \ref{glob CP DBeq_intro_Para th conjg } but without a lower bound on $r_k$. We chose to present the  computation showing $p\in C^{2}\Gamma_1^0(\d)$, which is the minimal regularity required for the definition of $e^{iT_{p(u)}}$ and $R(u)$.

For the ``paralinearised" KdV equation this gives local well-posedness in $H^{-\frac{1}{2}}(\d)$ which is the analogue of the result proven in \cite{Colliander03} for the full KdV equation.

\subsection{Strategy of the proof}
\paragraph*{\bf{The case $\alpha<2$}:} The starting point to prove the key apriori estimate \eqref{glob CP DBeq_intro_Para th CP imprv enrgy est_energy est} is  commuting $\D^s$ with the equation:
\[
\partial_t \D^su+i\frac{T_{ u\xi}+\left(T_{ u\xi}\right)^*}{2} \D^s u+\partial_x \abs{D}^{\alpha-1}\D^su=[T_u\partial_x,\D^s]u-i\frac{T_{ u\xi}-\left(T_{ u\xi}\right)^*}{2}\D^s u,
\]
where for an operator $A$, we write $A^*$ for it's $L^2$ adjoint. Thus defining $v=\D^su$ we get:
\[
\partial_t v+i\frac{T_{ u\xi}+\left(T_{ u\xi}\right)^*}{2} v+\partial_x \abs{D}^{\alpha-1}v=[T_u\partial_x,\D^s]\D^{-s}v-i\frac{T_{ u\xi}-\left(T_{ u\xi}\right)^*}{2} v.
\]
All of the terms in the right hand side are bounded in $L^2$ under the control of $\norm{\partial_x u}_{L^\infty}$. To improve upon this, following \cite{Ifrim17} we make a normal form transform: 
\[
w=v+B(v,v),
\]
where $B$ is a bi-linear operator of the form:
$$B(u,v)\sim D^{1-\alpha}R(u,v),$$
where $R(u,v)$ is the residual term generated in the paraproduct decomposition.
Thus we are brought to study an $L^2$ estimate on:
\[
\partial_t v+i\frac{T_{ u\xi}+\left(T_{ u\xi}\right)^*}{2} v+\partial_x \abs{D}^{\alpha-1}v=D^{2-\alpha}T(u,u,v),
\]
where $T$ is a bounded tri-linear operator. This is the exact analogue of the equation obtained in \cite{Ifrim17} after the normal form step.  Now we are looking to gauge transform the term $i\frac{T_{ u\xi}+\left(T_{ u\xi}\right)^*}{2}$ out of the equation. More precisely we are looking for an operator $A$ such that:
\begin{multline*}
\partial_t v+i\frac{T_{ u\xi}+\left(T_{ u\xi}\right)^*}{2} v+\partial_x \abs{D}^{\alpha-1}v=D^{2-\alpha}T(u,u,v)\\
\Rightarrow \partial_t v+\partial_x A^{-1}\abs{D}^{\alpha-1}Av=AD^{2-\alpha}T(u,u,v)+R_{2-\alpha},
\end{multline*}
where $A$ is a unitary operator modulo, at least, an $\alpha$-regularizing operator, $R_{2-\alpha}$ is of order $2-\alpha$ with $\RE(R_{2-\alpha})$ of order 0.

To find $A$ we follow our construction in \cite{Ayman20} and define $A=e^{iT_p}$, where $e^{i\tau T_p}$ is the flow of a hyperbolic paradifferential equation of the form:
\[
\partial_\tau e^{i\tau T_p} h_0-e^{i\tau T_p} h_0=0, \ e^{i 0 T_p}=Id.
\]
Using our result on the Baker-Campbell-Hausdorff formula for this type flow proved in \cite{Ayman20}, we are looking for $p$ such that:
\[
[e^{iT_p},\partial_x \abs{D}^{\alpha-1}]=e^{i T_p}i\frac{T_{ u\xi}+\left(T_{ u\xi}\right)^*}{2}+R.
\]

It's here where the two case $\alpha<2$ and $\alpha\geq 2$ have to be treated differently. Indeed, on one hand by Proposition \ref{BCH formula_def para hyperbolic flow_rem on prop com-conjg}
\[
[e^{iT_p},\partial_x \abs{D}^{\alpha-1}]=e^{i T_p}\ {}^c [\partial_x \abs{D}^{\alpha-1}]_1^p,
\]
where ${}^c [\partial_x \abs{D}^{\alpha-1}]_1^p$ belongs to a symbol class of the form $ L^{\infty}_*S_{\min(\alpha-1,1),2-\alpha}$. And on the other hand $$i\frac{T_{ u\xi}+\left(T_{ u\xi}\right)^*}{2}\in \Gamma^1_0=L^{\infty}_*S_{1,0},$$ thus for $\alpha<2$ there is no hope to solve $$ {}^c [\partial_x \abs{D}^{\alpha-1}]_1^p=i\frac{T_{ u\xi}+\left(T_{ u\xi}\right)^*}{2},$$
and for $\alpha\geq 2$ it is possible through a local implicit function theorem, which we will show in the next paragraph.\\

In \cite{Ayman20}, choosing $p=\frac{\xi\abs{\xi}^{1-\alpha}}{\alpha}U$, where $U$ is a primitive of $u$, was enough to have $R$ as an $\alpha-1$ regularizing operator when $s>\frac{3}{2}$ which gave us the desired result on the flow map regularity at this threshold. At our threshold of regularity where we only control the regularity of the solutions in $W^{2-\alpha}_x$ this would give $R$ as an operator of order $1$, that is there is no apparent gain that comes from this transformation.\\
To remedy this the idea is to construct $p$ implicitly. Indeed using the stability of paradifferential operators by commutation with $e^{iT_p}$ proved in \cite{Ayman20}, we solve the problem approximately using the ellipticity of $\xi\abs{\xi}^{\alpha-1}$, we show that  we can fully solve the first term in  the Baker-Campbell-Hausdorff expansion of ${}^c [\partial_x \abs{D}^{\alpha-1}]_1^p$, that is we solve:
 $$[iT_{p},T_{i\xi\abs{\xi}^{\alpha-1}}]=i\frac{T_{ u\xi}+\left(T_{ u\xi}\right)^*}{2}.$$

This amounts to right inverting a linear operator in the Fr\'echet space of  paradifferential operators. The problem is first reduced to a standard linear inversion in the scale of Banach spaces defining the Fr\'echet space of  paradifferential operators. Then using an explicit approximate parametrix, given by the usual Cole-Hopf choice of gauge transformation, a careful choice of cut-off functions studied in \cite{Ayman18} and symbolic calculus we show that a Neumann series can be carried out to correct the right parametrix into a right inverse in one Banach space in the scale. We then use a  bootstrap argument to propagate the regularity to the hole scale of Banach spaces and thus the Fr\'echet space of  paradifferential operators.

Getting back to the equation, we use the fact that $i\frac{T_{ u\xi}+\left(T_{ u\xi}\right)^*}{2}$ and $T_{i\xi\abs{\xi}^{\alpha-1}}$ are $L^2$ skew-adjoint to ensure that $p$ can be chosen $L^2$ self-adjoint. In doing so we construct an $L^2$ unitary operator $A=e^{iT_p}$ such that:
\[
[e^{iT_p},T_{i\xi\abs{\xi}^{\alpha-1}}]=e^{iT_p}\big(T_{i\xi}T_{ u}-T_{\frac{\partial_x u}{2}}\big)e^{-iT_p}+\underbrace{\int^1_0e^{i(1-r)T_p} [T_{ip},T_{u\partial_x+\frac{\partial_x u}{2}}]e^{i(r-1)T_p}dr}_{(1)}.
\]
Crudely at our threshold of regularity using symbolic calculus $(1)$ seems to be of order $1+2-\alpha$ which seems worse than what we had before, except in the case of the Benjamin-Ono equation, that is $\alpha=2$, indeed in that case:
\[
p(x,\xi)=\frac{\op(\frac{1}{D})P_{\geq b}(D)u}{2} \text{ for } \xi \geq 0,
\]
and, 
\[
\bigg[T^{B',b}_{ip},iT^{B',b}_{\sigma^{B,b}_{u\xi}+(\sigma^{B,b}_{u\xi})^*}\bigg]=T_{iu^2} \text{ for } \xi \geq 0,
\]
this ``exceptional" algebraic cancellation in the commutator is due to the $\partial_\xi p=0$ which does not occur for fractional $\alpha \neq 2$. This difficulty was noted in \cite{Herr10} and the proposed solution was to use a gauge transform with indeed $\partial_\xi p=0$ to eliminate only the lowest frequency terms in $u\partial_x u$, that is $P_0(D)u \partial_x u$, and treat the remainder terms in carefully chosen function spaces with frequency dependent time localisation. Inspired by this idea one can show that with the paradifferential setting developed here, the problem can be indeed reduced to a choice of $p$ such that $\partial_\xi p=0$, which will amount to a simple approximation of the symbol $p$ by step functions in the frequency variable $\xi$. But the careful use of the normal form transform combined with the implicit construction of $p$ ensuring self adjointness gives the following key cancellation which permits us to avoid this extra technical approximation step. Indeed using the identity:
\[
[B,C]^*=[C^*,B^*],
\]
 we see that $(1)$ is actually $L^2$ skew-adjoint, which gives the desired $L^2$ estimate after suitable control of the residual terms. 
 
\paragraph*{\bf{The case $\alpha\geq 2$}:}
The starting point of the problem is to try to completely conjugate the $T_{ u}\partial_x$ term away, that is we are looking for an operator $A$ such that
\[
\partial_t u+T_{ u}\partial_x u+T_{i\xi \abs{\xi}^{\alpha-1}}u=0\\
\Rightarrow \partial_t u+ A^{-1}T_{i\xi \abs{\xi}^{\alpha-1}}Av=0.
\]
A quick frequency localisation analysis shows that this can not be done completely using paradifferential operators, a residual term always appears, which essentially encodes that the all of Sobolev norms can not be exactly conserved or that the system is not exactly integrable.

Using the ellipticity of $\xi\abs{\xi}^{\alpha-1}$ and the paradifferential setting constructed we show that: $$p \mapsto T_{{}^c [i\xi\abs{\xi}^{\alpha-1}]_1^p}$$ is indeed locally surjective around $0$, which the key technical result we prove in Theorem \ref{glob CP DBeq_sec impl constr symb_thm constr gauge trsf}. This is a non trivial problem, equivalent to solving a nonlinear ODE in the Fr\'echet space of paradifferential symbols. Such an ODE is not generally well posed and to solve such a problem one usually has to look at a Nash-Moser type scheme\footnote{Such  a scheme can indeed be carried out here thanks to the tame estimates in Remark \ref{BCH formula_def para hyperbolic flow_rem on tame seminorm est} but we show that this can be avoided here.}. In our case the choice of paradifferential setting, inspired by by H\"ormander's \cite{Hormander90}, is shown to be stable by the gauge transformation in Proposition \ref{BCH formula_def para hyperbolic flow_prop com-conjg}. Thus We show that the problem can be reduced to a standard implicit function theorem combined with bootstrap argument in order to insure propagation of regularity. The bootstrap trick is the analogue of the one used in the Picard fixed point theorem depending on a parameter.

Thus we get,
\[T_{e^{iT_p}}\partial_tu+T_{i\xi\abs{\xi}^{\alpha-1}}T_{e^{iT_p}}u=T_{e^{iT_p}}\partial_x Res^1(u).
\]

Thus compared to first gauge transform we have to treat the term $\partial_t e^{iT_p}$ and $Res(u)$. At our threshold of regularity $\partial_t e^{iT_p}$ is still of order $1$ thus we don't have a gain on the order of the operator. One idea is to iterate the gauge transform to eliminate those time derivatives, that is construct $e^{iT_{p_n}}$ that eliminate the terms $(\partial_t e^{iT_{p_{n-1}}})$ at each step. While this schemes certainly works, we show that the problem can be solved in a ``cleaner" fashion, using a a simple application of a linear Nash-Moser scheme and again profiting from the para-linear setting to show that the map
$$p' \mapsto -T_{e^{-iT_{p'}}}T_{\partial_t e^{iT_{p'}}}+ T_{{}^c [i\xi\abs{\xi}^{\alpha-1}]_1^{p'}}$$ is also locally surjective around $0$. Thus we get,
\[\partial_t T_{e^{iT_p'}}u+T_{i\xi\abs{\xi}^{\alpha-1}}T_{e^{iT_p'}}u=T_{R(u)}u,
\]
where $R(u)$ is a residual para-differential term and is of order $0$.

\begin{remark}
The continuity of operators of type $T_{e^{i\tau p}}$ on Zygmund space with loss of derivatives were studied by E. Stein \cite{Stein93} and by G. Bourdaud in \cite{Bourdaud88}. In this paper we need explicit estimates taking into play the exact symbol semi-norms. For this we give a complete study of the continuity of paradifferential operators defined by symbols in these type of ``exotic" symbol classes in Appendix \ref{Cont of lim reg exotic sym}. Our proofs follow the same lines and methods presented in \cite{Stein93,Taylor91,Metivier08}. 
\end{remark}
\subsection{Acknowledgement}
I would like to express my sincere gratitude to my thesis advisor Thomas Alazard. I would also like to thank J-C. Saut for introducing me to this very interesting problem and the time and numerous discussions we had that helped me understand the problem.  
\section{Baker-Campbell-Hausdorff formula: composition and commutator estimates}\label{BCH formula}
We will start by giving the propositions defining the operators used in the gauge transforms and the symbolic calculus associated to them. All the main theorems are proved in \cite{Ayman20} and we will follow the same presentation, though we make a couple of more precise estimates on the semi-norms used, when we do so a proof is written. 
\begin{notation}
We will essentially compute the conjugation and commutation of operators with a flow map which naturally bring into play Lie derivatives that is commutators, thus we introduce the following notation for commutation between operators:
\[
\mathfrak{L}^0_a b=b, \ \mathfrak{L}_a b=[a,b]=a\circ b -b \circ a,\
\mathfrak{L}^2_a b=[a,[a,b]] , \ \mathfrak{L}^k_a b=\underbrace{[a,[\cdots,[a,}_{\text{k times}}b]]\cdots].
\] 

In the following propositions the variable $t\in [0,T]$ is the generic time variable that appear all through the paper and a new variable $\tau \in \r$ will be used and they should not be confused. 

We also need to recall the definition of the adjoint of a paradifferential symbol $p$ that we write $p^*$ and is given in \eqref{paracomposition_Notions of microlocal analysis_Paradifferential Calculus_definition adjoint para}.
\end{notation}
We start with the proposition defining the flow map and its standard properties. 
\begin{proposition}\label{BCH formula_def para hyperbolic flow}
Consider two real numbers $\delta \leq 1$, $s\in \r$ and a symbol $p\in \Gamma^{\delta}_0(\d)$ such that:
\[
\IM(p)=\frac{p-p^*}{2i}\in \Gamma^{\tilde{\delta}}_0(\d), \text{ with } \tilde{\delta}\leq 0.
\]
The following linear hyperbolic equation is well posed on $\r$:
\begin{equation}\label{BCH formula_def para hyperbolic flow_eq CP}
\begin{cases} 
\partial_\tau h-iT_{p} h=0, \\
h(0,\cdot)=h_0(\cdot)\in H^s.
\end{cases}
\end{equation}
For $\tau \in \r$, define $e^{i\tau T_p}$ as the flow map associated to \eqref{BCH formula_def para hyperbolic flow_eq CP} that is, 
\begin{align}\label{BCH formula_def para hyperbolic flow_eq def flow}
e^{i\tau T_p}:&H^s(\d) \rightarrow H^s(\d)\nonumber\\
&h_0 \mapsto h(\tau,\cdot).
\end{align}
Then for $\tau \in \r$ we have, 
\begin{enumerate}
\item $e^{i\tau T_p}\in \mathscr{L}(H^s(\d))$ and,
$$ \norm{e^{i\tau T_p}}_{H^s \rightarrow H^s}\leq e^{C\abs{\tau}  M_0^{0}(\IM(p))}. $$
\item $$iT_{p}\circ e^{i\tau T_p}=e^{i\tau T_p} \circ i T_{p}, \ e^{i(\tau+\tau') T_p}= e^{i\tau T_p}e^{i\tau' T_p}.$$
\item $e^{i\tau T_p}$ is invertible and,
 $$(e^{i\tau T_p})^{-1}=e^{-i\tau T_p}.$$ 
 Moreover the $L^2$ adjoint of $e^{i\tau T_p}$ verifies:
  $$(e^{i\tau T_p})^*=e^{-i\tau (T_p)^*}=e^{-i\tau T_p}+R,$$ 
  where $R$ is a $\tilde{\delta}$ regularizing operator and $e^{i\tau (T_p)^*}$ is the flow generated by the Cauchy problem:
  \begin{equation}\label{BCH formula_def para hyperbolic flow_CP def flow conjugate}
\begin{cases} 
\partial_\tau h-i(T_p)^* h=0, \\
h(0,\cdot)=h_0(\cdot)\in H^s(\d).
\end{cases}
\end{equation}
\item Taking a different symbol $\tilde{p}$ verifying the same hypothesis as $p$  we have:
\begin{equation}\label{BCH formula_def para hyperbolic flow_est diff flow}
   \norm{[e^{i\tau T_p}-e^{i\tau\tilde{p}}] h_0}_{H^s }\leq C\abs{\tau} e^{C\abs{\tau}  M_0^{0}(\IM(p),\IM(\tilde{p})) }M^\delta_0(p-\tilde{p})\norm{h_0}_{H^{s+\delta}}.
   \end{equation}
\end{enumerate}
\end{proposition}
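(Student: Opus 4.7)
The plan is to construct the flow by an energy method and then deduce (2)--(4) from uniqueness and Duhamel's formula. For (1), I would first regularise by cutting $p$ off in frequency so that $T_{p_\eps}$ is bounded on every $H^s$; a standard ODE argument then gives a unique solution $h^\eps$, and the task reduces to an $\eps$-uniform $H^s$ bound. The core is an $L^2$ estimate: differentiating $\norm{h}_{L^2}^2$ produces the quadratic form $i\bigl((T_p-T_p^*)h,h\bigr)$, and by the paradifferential composition formula $T_p - T_p^* = 2i\, T_{\IM(p)}$ modulo an $L^2$-bounded operator. Since the hypothesis $\IM(p)\in\Gamma_0^{\tilde\delta}$ with $\tilde\delta\leq 0$ makes $T_{\IM(p)}$ bounded on $L^2$ with norm controlled by $M_0^0(\IM(p))$, Gronwall yields the claimed exponential bound. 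For general $s$ I would conjugate by $\D^s$: the commutator $[\D^s, iT_p]\D^{-s}$ has order $\delta-1\leq 0$ and is therefore $L^2$-bounded, so $h_s:=\D^s h$ satisfies an equation of the same form as $h$ modulo a bounded perturbation, and the same energy argument gives the $H^s$ bound.

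Parts (2) and (3) are then formal consequences. That $iT_p$ commutes with $e^{i\tau T_p}$ follows from uniqueness applied to the Cauchy problem with data $iT_p h_0$; the semigroup law is analogous, and invertibility is immediate taking $\tau'=-\tau$. For the adjoint, transposing \eqref{BCH formula_def para hyperbolic flow_eq CP} gives that $(e^{i\tau T_p})^*$ solves the backward problem $\partial_\tau U = -i\, U (T_p)^*$, hence coincides with the flow $e^{-i\tau (T_p)^*}$ defined by \eqref{BCH formula_def para hyperbolic flow_CP def flow conjugate}. The identity $e^{-i\tau (T_p)^*} = e^{-i\tau T_p}+R$ with $R$ a $\tilde\delta$-regularising operator follows from the Duhamel representation
$$e^{-i\tau(T_p)^*}-e^{-i\tau T_p} = -i\int_0^\tau e^{-i(\tau-s)(T_p)^*}\bigl[(T_p)^* - T_p\bigr] e^{-isT_p}\, ds,$$
together with the observation that the bracketed factor $(T_p)^* - T_p = -2i T_{\IM(p)}$ modulo smoothing has order $\tilde\delta$, while the flanking flows are bounded on the appropriate Sobolev spaces by (1).

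For (4), the same Duhamel trick gives
$$e^{i\tau T_p} - e^{i\tau T_{\tilde p}} = i\int_0^\tau e^{i(\tau-s) T_p}\, T_{p-\tilde p}\, e^{is T_{\tilde p}}\, ds,$$
and one concludes by bounding $T_{p-\tilde p}\colon H^{s+\delta}\to H^s$ by $C\, M_0^\delta(p - \tilde p)$ and combining with the $H^s$ estimates from (1) for both flows. The point I expect to be the main obstacle is part (1) at the precise level stated, namely obtaining an exponential depending only on $M_0^0(\IM(p))$ rather than on higher semi-norms of $p$: one must verify that commuting with $\D^s$ does not spoil the ``symmetric part has non-positive order'' structure to an extent that would force the exponent to involve $M_0^\delta(p)$. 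This is where one must rely on the sharp paradifferential composition and adjoint formulas, and where the preservation of the skew-adjoint structure in the top-order part of $iT_p$ is essential.
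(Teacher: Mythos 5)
Your plan—energy estimate for part (1), uniqueness of the Cauchy problem for the semigroup identities in (2)–(3), and Duhamel for the adjoint remainder in (3) and the difference estimate in (4)—is exactly the route the paper (and the earlier reference \cite{Ayman20} it defers to) takes. The paper's own ``proof'' of this proposition is a single remark: the energy estimate was carried out in \cite{Ayman20} for $p\in\Gamma^\delta_1$, and inspection shows it only uses $M^0_0(\RE(ip))=M^0_0(\IM(p))$. Your Duhamel identities for parts (3) and (4) are correct and are what fill in the details.

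The obstacle you flag at the end is genuine and is not resolved by the paper either, so it is worth spelling out. Your steps ``$T_p-T_p^*=2iT_{\IM p}$ modulo an $L^2$-bounded operator'' and ``$[\D^s,iT_p]\D^{-s}$ has order $\delta-1$'' both rely on the paradifferential symbolic calculus gaining one order, which requires $W^{1,\infty}$ regularity of $p$ in $x$, i.e.\ $p\in\Gamma^\delta_1$. Under the hypothesis $p\in\Gamma^\delta_0$ as literally stated, $T_p^*-T_{\bar p}$ and $[\D^s,T_p]$ are in general of order $\delta$, not $\delta-1$ (for instance for $p=u(x)\xi$ with $u\in L^\infty$ real, $[D,T_u]$ is of order $1$, so $iT_p+(iT_p)^*$ is not $L^2$-bounded, and the flow is not a contraction on $L^2$ as the stated bound would force when $\IM p=0$). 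In other words, the bound $\norm{e^{i\tau T_p}}\leq e^{C|\tau|M^0_0(\IM p)}$ with a constant $C$ independent of higher seminorms of $p$ is only justified with $\Gamma^\delta_1$. In the paper's applications the gauge symbols do lie in $\Gamma^\delta_1$ (Theorem 3.1 produces them there), so this does not affect the downstream results, but the proposition should be read with the Lipschitz hypothesis tacitly in force, or the constant allowed to depend on $M^\delta_1(p)$ as well. Your proposal identifies exactly this and is otherwise the same argument as the paper's.
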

\begin{proof}
In \cite{Ayman20} we worked with $p\in \Gamma_1^\delta$ and the continuity of $e^{i\tau T_p}$, that is point $(1)$, was proved through an energy estimate. Looking closely to the energy estimate we see that that we only need to control the semi norm $M_0^0(\RE(ip))$ and with the hypothesis on $p$ we get the more precise result.
\end{proof}

Under the hypothesis $p\in \Gamma^{\delta}_1(\d)$ and $p$ real valued we have $\tilde{\delta}\leq\delta-1$ by symbolic calculus and the inequality:
\[
M_0^{\delta-1}(\IM(p))\leq C M_0^{\delta-1}(\partial_\xi \partial_x p), \ \tilde{\delta}\leq 0,
\]
which automatically verified the desired conditions in \cite{Ayman20}.

Later on we will need to study the continuity of $e^{i\tau T_p}$ on H\"older/Zygmund spaces. This a non trivial result, indeed hyperbolic flows are not in general continuous on $L^p$ spaces for $p\neq 2$, as it is for example the case for the Schr\"odinger equation, or equations of the form $\partial_t h+i \abs{D}^{\alpha}h=0, \alpha \neq 1$ that are not continuous on Zygmund spaces as shown in the Appendix of \cite{Alazard16}. 
To study the continuity of $e^{i\tau T_p}$ on H\"older/Zygmund we start by studying its symbol. First we recall the following Lemma we proved in \cite{Ayman20} adapting the classic result by Beals on pseudodifferential operators in \cite{Beals77} to the limited regularity setting.

\begin{lemma}\label{BCH formula_def para hyperbolic flow_lem Beals seminorm estimates adapted}
Consider an operator $A$ continuous from $\sr(\d)$ to $\sr'(\d)$ and let $a\in \sr'(\d\times \hat{\d})$ be the unique symbol associated to A (cf. \cite{Bony 13} for the uniqueness), that is, let $K$ be the kernel associated to $A$ then:
\[
u,v \in \sr(\d), (Au,v)=K(u\otimes v),\ \ a(x,\xi)=\fr_{y\rightarrow \xi}K(x,x-y).
\]
\begin{itemize}
\item If $A$ is continuous from $H^m$ to $L^2$, with $m\in  \r$, and $[\frac{1}{i}\frac{d}{dx},A]$ is continuous from $H^{m+\delta}$ to $L^2$ with $\delta<1$, then $(1+\abs{\xi})^{-m}a(x,\xi)\in L^\infty_{x,\xi}(\d \times \hat{\d})$ and we have the estimate:
\begin{equation}\label{BCH formula_def para hyperbolic flow_lem Beals seminorm estimates adapted_est in x}
\norm{(1+\abs{\xi})^{-m}a}_{L^\infty_{x,\xi}}\leq C_m \bigg[\norm{A}_{H^m \rightarrow L^2}+\norm{\bigg[\frac{1}{i}\frac{d}{dx},A\bigg]}_{H^{m+\delta} \rightarrow L^2}\bigg].
\end{equation}
\item If $A$ is continuous from $H^m$ to $L^2$, with $m\in  \r$, and $[ix,A]$ is continuous from from $H^{m-\rho}$ to $L^2$ with $\rho \geq 0$, then $(1+\abs{\xi})^{-m}a(x,\xi)\in L^\infty_{x,\xi}(\d \times \hat{\d})$ and we have the estimate:
\begin{equation}\label{BCH formula_def para hyperbolic flow_lem Beals seminorm estimates adapted_est in xi}
\norm{(1+\abs{\xi})^{-m}a}_{L^\infty_{x,\xi}}\leq C_m [\norm{A}_{H^m \rightarrow L^2}+\norm{[ix,A]}_{H^{m-\rho} \rightarrow L^2}].
\end{equation}
\end{itemize}
\end{lemma}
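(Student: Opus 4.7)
The plan is to adapt the classical Beals testing argument to the limited-regularity setting: test $A$ against suitable wave packets and extract point values of the symbol by combining the 1D sharp Sobolev--Gagliardo--Nirenberg inequality with the commutator hypothesis. The first and second estimates use commutators that control $\partial_x a$ and $\partial_\xi a$ respectively, and the wave packets are chosen accordingly. I describe the first estimate in detail; the second follows by the analogous scheme.

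First I fix $(x_0,\xi_0) \in \d\times\hat\d$ and introduce the wave packet
\[
\phi(x) := \lambda^{1/2}\rho(\lambda(x-x_0))e^{ix\xi_0},\qquad \rho \in \sr(\r),\ \rho(0)=1,
\]
with a scale parameter $\lambda > 0$ to be chosen. A direct Fourier computation yields the averaging identity
\[
A\phi(x_0) = \frac{\lambda^{1/2}e^{ix_0\xi_0}}{2\pi}\int a(x_0,\xi_0+\lambda\eta)\hat\rho(\eta)\,d\eta,
\]
together with the wave-packet bound $\|\phi\|_{H^s} \leq C_s(1+|\xi_0|)^s$, valid for $\lambda \leq 1$ and any $s\in\r$. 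The next step is to control $|A\phi(x_0)|$ by working with $v(x) := e^{-ix\xi_0}A\phi(x)$, for which a short computation gives
\[
\partial_x v = e^{-ix\xi_0}\bigl([\partial_x,A]\phi + A\phi^{(1)}\bigr),\qquad \phi^{(1)}(x) := \lambda^{3/2}\rho'(\lambda(x-x_0))e^{ix\xi_0}.
\]
The subtraction of the leading $i\xi_0$-oscillation removes the dangerous factor $(1+|\xi_0|)$ that would arise from $A\partial_x\phi$, leaving only $\|\phi^{(1)}\|_{H^s} \leq C\lambda(1+|\xi_0|)^s$. Applying the two operator hypotheses one gets
\[
\|v\|_{L^2} \leq C(1+|\xi_0|)^m\|A\|_{H^m\to L^2},
\]
\[
\|\partial_x v\|_{L^2} \leq C\lambda(1+|\xi_0|)^m\|A\|_{H^m\to L^2}+C(1+|\xi_0|)^{m+\delta}\|[\partial_x,A]\|_{H^{m+\delta}\to L^2},
\]
and the sharp 1D interpolation $\|v\|_{L^\infty}^2 \leq 2\|v\|_{L^2}\|\partial_x v\|_{L^2}$ combined with the optimal balance $\lambda \sim (1+|\xi_0|)^\delta$ yields
\[
\left|\int a(x_0,\xi_0+\lambda\eta)\hat\rho(\eta)\,d\eta\right| \leq C(1+|\xi_0|)^m\bigl[\|A\|_{H^m\to L^2}+\|[\partial_x,A]\|_{H^{m+\delta}\to L^2}\bigr].
\]
Varying the profile $\rho$ and applying a Lebesgue-differentiation/bump-decomposition argument then converts this family of averaged bounds into the weighted $L^\infty$ pointwise estimate on $a$.

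The second estimate is proved by the same scheme, with the commutator $[ix,A]$ (whose symbol is $\partial_\xi a$) playing the role of $[\partial_x,A]$, and the wave packet narrowly localized in $x$ rather than $\xi$. The main obstacles are twofold: the sharp book-keeping in the derivative estimate, where the wave-packet modulation trick (subtracting $i\xi_0\phi$ before applying $A$) and the hypothesis $\delta<1$ are both essential to prevent a loss of one full derivative; and the passage from scale-averaged bounds to pointwise essential-supremum control, which requires a careful combination of multiple scales of averaging with Lebesgue differentiation or atomic/wavelet-type bump decompositions.
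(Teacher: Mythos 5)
Your wave-packet testing scheme is the right flavor of argument (this is indeed how Beals-type results are approached), and the averaging identity, the modulation trick $v=e^{-ix\xi_0}A\phi$, and the Gagliardo--Nirenberg interpolation are all correctly set up. However, there are two genuine gaps.

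\textbf{The averaged-to-pointwise passage cannot be closed as described.} After the balancing step you obtain a bound on the $\xi$-average of $a(x_0,\cdot)$ over a window of width $\lambda \sim (1+|\xi_0|)^\delta$, and you propose to ``Lebesgue-differentiate'' this to a pointwise bound. But the balanced choice of $\lambda$ is forced by the competition between the $\lambda^{-1/2}$ prefactor and the two terms in $\|\partial_x v\|_{L^2}$: shrinking $\lambda$ below $(1+|\xi_0|)^\delta$ makes the prefactor dominate and the estimate blows up (with a factor $\lambda^{-1/2}$) rather than converging. So the averaging scale is bounded below and never tends to zero for a fixed $\xi_0$. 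Since the first bullet makes \emph{no} assumption controlling $\partial_\xi a$, the symbol $a(x_0,\cdot)$ is a priori free to oscillate in $\xi$ at a scale much finer than $(1+|\xi_0|)^\delta$, and such oscillations are invisible to averages at scale $\lambda$. Replacing a single Lebesgue-differentiation step by a multi-scale bump decomposition does not help: the window width cannot be taken below $(1+|\xi_0|)^\delta$ without destroying the estimate. In particular, even on the torus, where one can simply test against $e^{ix\xi}$, $\xi\in\mathbb{Z}$, and read off $a(\cdot,\xi)=e^{-ix\xi}A(e^{i\cdot\xi})$ directly (no averaging), the sharp Gagliardo--Nirenberg step gives $\|a(\cdot,\xi)\|_{L^\infty}\lesssim (1+|\xi|)^{m}\|A\|^{1/2}\bigl(\|A\|+(1+|\xi|)^\delta\|[D,A]\|\bigr)^{1/2}$, with an apparent loss of $(1+|\xi|)^{\delta/2}$ when $\delta>0$. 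You need to say where and how that loss is recovered, or acknowledge it as the central technical point; leaving it as a ``Lebesgue differentiation'' black box is the actual crux, not a finishing touch.

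\textbf{The constraint on $\lambda$ and the role of $\delta<1$.} You write ``valid for $\lambda\leq 1$,'' but this contradicts the balancing choice $\lambda\sim(1+|\xi_0|)^\delta$ whenever $\delta>0$. The correct constraint for the wave-packet norm estimates $\|\phi\|_{H^s}\lesssim(1+|\xi_0|)^s$ is $\lambda\lesssim 1+|\xi_0|$, and it is \emph{here} that the hypothesis $\delta<1$ enters: the balanced $\lambda=(1+|\xi_0|)^\delta$ stays below the carrier frequency only when $\delta<1$. This should be made explicit; as written, your derivation is inconsistent for $\delta\in(0,1)$.

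\textbf{The second bullet is not ``the same scheme.''} In the first bullet, the Gagliardo--Nirenberg interpolation $\|v\|_{L^\infty}^2\lesssim\|v\|_{L^2}\|\partial_x v\|_{L^2}$ is available precisely because the hypothesis controls $[D,A]$, hence $\partial_x v$. In the second bullet only $[ix,A]$ is controlled, which gives $L^2$ bounds on $(x-x_0)A\phi$, not on $\partial_x(e^{-ix\xi_0}A\phi)$; from $\|A\phi\|_{L^2}$ and $\|(x-x_0)A\phi\|_{L^2}$ alone, no pointwise bound at $x_0$ follows. One must extract the $L^\infty_x$ information by a different route (e.g.\ Fourier inversion of the kernel in the $\xi$ direction, exploiting that $[ix,A]$ controls the kernel's decay off the diagonal). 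Dismissing this as ``by the analogous scheme'' hides a second genuine gap.
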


We can now define the limited-regularity symbol classes to which $e^{i\tau T_p}$ belongs.
\begin{definition}\label{BCH formula_def para hyperbolic flow_def nonreg symb}
Consider $s\in \r_+$, for $0\leq \delta, \rho <1$, we say:
\begin{equation}\label{BCH formula_def para hyperbolic flow_def nonreg symb_eq1}
p\in W^{s,\infty} S^m_{\rho,\delta}(\d) \iff 
\begin{cases}
 \norm{D^k_\xi p(\cdot,\xi)}_{W^{s,\infty}} \leq C_{k,0} \langle \xi \rangle^{m-\rho k}  \vspace*{0.2cm}\\
  \abs{D^{\lfloor s \rfloor+n}_x D^k_\xi p(x,\xi)} \leq C_{k,n} \langle \xi \rangle^{m-\rho k+(n+\lfloor s \rfloor-s)\delta}
 \end{cases}, (x,\xi)\in \d \times \tilde{\d},
\end{equation}
for $k\geq 0$ and $n\geq 1$. The best constants $C_{k,n}$ in \eqref{BCH formula_def para hyperbolic flow_def nonreg symb_eq1} define a family of seminorms denoted by ${}^{\rho,\delta}M^m_{n,s}(\cdot;k),(k,n)\in \n^2$ where $k$ is the number of derivatives we make on the frequency variable $\xi$ and $n$ is that in the $x$ variable. We also define the seminorm ${}^{\rho,\delta}M^m_s(\cdot)={}^{\rho,\delta}M^m_{0,s}(\cdot;1)$ and define  analogously $W^{s,\infty} S^m_{\rho,\delta}(\d^*\times  \tilde{\d})$. 

Motivated by Lemma \ref{BCH formula_def para hyperbolic flow_lem Beals seminorm estimates adapted} we introduce the following family of seminorms:
\begin{multline*}
{}^{\rho,\delta}H^m_{0,s}(p;k)=\sum_{l=0}^{\lfloor s \rfloor}\sum^k_{j=0}\norm{\mathfrak{L}^j_{ix}\mathfrak{L}^l_{\frac{1}{i}\frac{d}{dx}}\op(p)}_{H^m \rightarrow H^{j\rho}}\\+\sup_{n\in \n}2^{n(s-\lfloor s \rfloor)}\sum^k_{j=0}\norm{\mathfrak{L}^j_{ix}\mathfrak{L}^{\lfloor s \rfloor}_{\frac{1}{i}\frac{d}{dx}}[P_{n}(D)\op(p)] }_{H^m \rightarrow H^{j\rho}},
\end{multline*}
where $P_n(D)$ is applied to $p$ in the $x$ variable. And for $n\geq 1$ 
\[
{}^{\rho,\delta}H^m_{n,s}(p;k)={}^{\rho,\delta}H^m_{0,s}(p;k)+\sum_{l=1}^{n}\sum^k_{j=0}\norm{\mathfrak{L}^j_{ix}\mathfrak{L}^{\lfloor s \rfloor+l}_{\frac{1}{i}\frac{d}{dx}}\op(p)}_{H^m \rightarrow H^{j\rho-(l+\lfloor s \rfloor-s)\delta}}.
\]
Then ${}^{\rho,\delta}H^m_{n,s}(\cdot;k)_{(n,k)\in \n^2}$ induces an equivalent Fr\'echet topology to ${}^{\rho,\delta}M^m_{n,s}(\cdot;k)_{(n,k)\in \n^2}$ on $W^{s,\infty} S^m_{\rho,\delta}$.
\end{definition}

In Stein's \cite{Stein93}, such symbols are called ``exotic", their continuity on different $L^p$ spaces is completely studied but only in the regular case and without explicit estimates depending on the semi-norms. To make such estimates explicit we have given a full proof of such continuity theorems in Appendix \ref{Cont of lim reg exotic sym}.

\begin{proposition}\label{BCH formula_lem est const lie derv_prop Atau symb}
Consider two real numbers $\delta < 1$, $\rho \geq 0$ and a symbol $p\in \Gamma^{\delta}_\rho(\d)$ such that:
\[
\IM(p)=\frac{p-p^*}{2i}\in \Gamma^{\tilde{\delta}}_0(\d), \text{ with } \tilde{\delta}\leq 0.
\]
 Let $e^{i\tau T_p},\tau \in \r$ be the flow map defined by Proposition \ref{BCH formula_def para hyperbolic flow}, then there exists  a symbol $e_\otimes^{i\tau p} \in W^{\rho,\infty}S^0_{1-\delta,\delta}(\d^*\times  \tilde{\d})$ such that:
\begin{equation}\label{BCH formula_lem est const lie derv_Atau symb eq}
e^{i\tau T_p}=\op(e_\otimes^{i\tau p}).
\end{equation}
Moreover we have the identity: 
\begin{equation}\label{BCH formula_lem est const lie derv_Atau symb ident 2}
e^{i\tau T_p}=T_{e^{i\tau p}}+\int_0^\tau e^{i(\tau-s)p}\big(T_{ip}T_{e^{is p}}-T_{ipe^{is p}} \big)ds.
\end{equation}
\end{proposition}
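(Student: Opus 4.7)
The plan is to first establish the pseudodifferential structure of $e^{i\tau T_p}$ via the Beals-type characterization of Lemma \ref{BCH formula_def para hyperbolic flow_lem Beals seminorm estimates adapted}, and then derive the Duhamel identity \eqref{BCH formula_lem est const lie derv_Atau symb ident 2} by comparing the true flow with the naive approximation $T_{e^{i\tau p}}$.

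For the symbol class, I proceed by induction on the number of iterated commutators with $ix$ and $\frac{1}{i}\frac{d}{dx}$. The base case is the $L^2\to L^2$ continuity of $e^{i\tau T_p}$, which is exactly point $(1)$ of Proposition \ref{BCH formula_def para hyperbolic flow} applied with $s=0$ and our hypothesis $\IM(p) \in \Gamma^0_0$. For a commutator with $ix$ I differentiate in $\tau$ to obtain the driven linear equation
\begin{equation*}
\partial_\tau [ix, e^{i\tau T_p}] - iT_p [ix, e^{i\tau T_p}] = [ix, iT_p]\,e^{i\tau T_p},
\end{equation*}
and solve it by Duhamel:
\begin{equation*}
[ix, e^{i\tau T_p}] = \int_0^\tau e^{i(\tau-s)T_p}\,[ix, iT_p]\,e^{isT_p}\,ds.
\end{equation*}
Since $[ix, iT_p]$ is, modulo a smoother remainder, $-\op(\partial_\xi p)$ with $\partial_\xi p \in \Gamma^{\delta-1}_\rho$, the right-hand side is continuous from $H^{\delta-1}$ to $L^2$; the second estimate of Lemma \ref{BCH formula_def para hyperbolic flow_lem Beals seminorm estimates adapted} then yields the pointwise bound $\langle\xi\rangle^{-(1-\delta)}$ on $\partial_\xi$ of the symbol of $e^{i\tau T_p}$. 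Iterating provides the $\langle\xi\rangle^{-(1-\delta)k}$ bounds on $\partial_\xi^k$. The same scheme applied with $\frac{1}{i}\frac{d}{dx}$ in place of $ix$ yields the $x$-regularity estimates, since each $\mathfrak{L}^{l}_{\frac{1}{i}\frac{d}{dx}}T_p = T_{\partial_x^l p}$ (modulo smoothing) has order $\delta$ for $l \leq \lfloor\rho\rfloor$, and the fractional part $\rho-\lfloor\rho\rfloor$ is absorbed precisely by the Littlewood--Paley-localised piece of the semi-norms ${}^{\rho,\delta}H^m_{n,s}$ of Definition \ref{BCH formula_def para hyperbolic flow_def nonreg symb}. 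Combining both families of commutators gives membership of the symbol in $W^{\rho,\infty}S^0_{1-\delta,\delta}(\d^*\times\tilde{\d})$.

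For the identity \eqref{BCH formula_lem est const lie derv_Atau symb ident 2}, I set $V(\tau) = T_{e^{i\tau p}}$ and use linearity of the paraproduct together with $iT_p = T_{ip}$ to compute
\begin{equation*}
(\partial_\tau - iT_p)\,V(\tau) = T_{ip\,e^{i\tau p}} - T_{ip}\,T_{e^{i\tau p}} = -\bigl(T_{ip}\,T_{e^{i\tau p}} - T_{ip\,e^{i\tau p}}\bigr).
\end{equation*}
Since $e^{i\tau T_p}$ is the fundamental solution of $\partial_\tau - iT_p$, Duhamel's formula produces
\begin{equation*}
V(\tau) = e^{i\tau T_p}\,V(0) - \int_0^\tau e^{i(\tau-s)T_p}\bigl(T_{ip}\,T_{e^{isp}} - T_{ip\,e^{isp}}\bigr)\,ds,
\end{equation*}
and rearranging, with $V(0)=T_1$ identified with the identity modulo the smoothing operator $\mathrm{Id}-T_1$ encoded in the paraproduct convention, recovers \eqref{BCH formula_lem est const lie derv_Atau symb ident 2}.

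The main obstacle is the bookkeeping for the symbol-class estimate: checking that the Duhamel iteration preserves the correct orders at every step and that the mixed commutator estimates add up to the precise semi-norms ${}^{1-\delta,\delta}H^0_{n,\rho}$ required to invoke Lemma \ref{BCH formula_def para hyperbolic flow_lem Beals seminorm estimates adapted}. The non-integer part of $\rho$ forces the Littlewood--Paley-localised version of these semi-norms, which is the technical refinement beyond the integer-regularity case of the standard exotic symbol calculus.
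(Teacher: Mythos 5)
The paper does not carry its own proof of this proposition in this text — it is imported from \cite{Ayman20} — but the auxiliary material it does retain (the Beals-type Lemma \ref{BCH formula_def para hyperbolic flow_lem Beals seminorm estimates adapted} and the $H^m$ seminorms of Definition \ref{BCH formula_def para hyperbolic flow_def nonreg symb}) is visibly tailored for exactly the scheme you use: propagate commutator estimates along the flow by Duhamel, then read off the symbol bounds from the Beals characterization. That part of your argument is sound, and Remark \ref{BCH formula_remark self contained} also points out a shortcut for the case $\delta=0$ actually used in this paper, namely the Bourdaud--H\"ormander stability of the $S^0_{1,1}$-with-adjoint algebra under composition, which avoids the Beals machinery entirely.

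The genuine issue is in your derivation of \eqref{BCH formula_lem est const lie derv_Atau symb ident 2}. Setting $V(\tau)=T_{e^{i\tau p}}$ your Duhamel formula gives
\[
T_{e^{i\tau p}} \;=\; e^{i\tau T_p}\,T_1 \;-\; \int_0^\tau e^{i(\tau-s)T_p}\bigl(T_{ip}T_{e^{isp}}-T_{ip\,e^{isp}}\bigr)\,ds,
\]
so after rearranging the left-hand side is $e^{i\tau T_p}\,T_1$, not $e^{i\tau T_p}$. With the paper's cutoff convention (Definition-Proposition \ref{paracomposition_Notions of microlocal analysis_Paradifferential Calculus_def para op}, $\psi^{B,b}(0,\xi)=0$ for $\abs{\xi}<b$), $T_1$ is a Fourier multiplier killing low frequencies, so $T_1\neq \mathrm{Id}$ and the discrepancy is the extra term $e^{i\tau T_p}(\mathrm{Id}-T_1)$. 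You notice this (``$V(0)=T_1$ identified with the identity modulo the smoothing operator $\mathrm{Id}-T_1$'') but then discard it; the proposition states an equality of operators, not one modulo smoothing, so either you should track that term explicitly, or you should argue why it is absent under the convention actually in force (e.g.\ if one works with $b=0$ or restricts both sides to $P_{\geq b}(D)$-projected data). As written, your proof establishes \eqref{BCH formula_lem est const lie derv_Atau symb ident 2} only up to a low-frequency smoothing error, which is a weaker statement than the claimed identity.

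A secondary, more minor point: your wording ``the second estimate of Lemma \ref{BCH formula_def para hyperbolic flow_lem Beals seminorm estimates adapted} then yields the pointwise bound on $\partial_\xi$ of the symbol'' is not literally what the lemma gives. Both bullets of that lemma bound the symbol $a$ itself, not $\partial_\xi a$; to bound $\partial_\xi a$ you must apply the lemma to the operator $[ix,e^{i\tau T_p}]$ (whose symbol is $-\partial_\xi a$), which in turn requires controlling the next commutator $\mathfrak L^2_{ix}$ or $\mathfrak L_{\frac{1}{i}\frac{d}{dx}}\mathfrak L_{ix}$, again by Duhamel. This is only a matter of presentation since your induction supplies exactly these iterated commutator bounds, but the chain of logic should be stated in that order.
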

\begin{remark}\label{BCH formula_remark self contained}
In this paper we will only use the result for $\delta=0$ and thus the previous statement is simply a corollary of stability under composition of the class operators $a \in S^0_{1,1}$ such that it's $a^*\in S^0_{1,1}$ where $a^*$ is given by
$$
a^*(x,\xi)=\frac{1}{2\pi}\int_{\d\times \hat{\d}}e^{-iy.\eta} \bar{a}(x-y,\xi-\eta)dyd\eta,
$$ 
This a classic result first given by Bourdaud in \cite{Bourdaud88} and by H\"ormander in \cite{Hormander89}. We opted to keep the statements of the more general theorems of \cite{Ayman20} as we think they are important in understanding the larger picture and difficulties when preforming the gauge transformation and how to get over them.
\end{remark}
Combining the previous Proposition with Theorem \ref{Cont of lim reg exotic sym_thm for para symb} we get the following:
\begin{corollary}\label{BCH formula_Atau Zyg cont}
Consider two real numbers $\delta < 1$, $\rho \geq 0$ and a symbol $p\in \Gamma^{\delta}_\rho(\d)$ such that:
\[
\IM(p)=\frac{p-p^*}{2i}\in \Gamma^{\tilde{\delta}}_0(\d), \text{ with } \tilde{\delta}\leq 0.
\]

Then $e^{i\tau T_p}$ is continuous from $C_*^s$ to $C_*^{s-\frac{\delta}{2}}$ and from $W^{s+(\frac{1}{2}-\frac{1}{p})\delta,p}$ to $ W^{s,p}$ for $s>0$. Moreover we have the estimate:
\[\norm{e_\otimes^{i\tau p}}_{W^{s+(\frac{1}{2}-\frac{1}{p})\delta,p}\rightarrow W^{s,p}}\leq K \ {}^{1-\delta,\delta} M^{0}_{0}(e_\otimes^{i\tau p};1),\text{ and,}\]
\[\norm{e_\otimes^{i\tau p}}_{C^{s+\frac{1}{2}\delta}_*\rightarrow C^{s}_*}\leq K \  {}^{1-\delta,\delta}M^{0}_{0}(e_\otimes^{i\tau p};1).  \]
\end{corollary}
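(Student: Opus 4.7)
The plan is to chain the two ingredients already cited in the line preceding the corollary. First, I would invoke Proposition \ref{BCH formula_lem est const lie derv_prop Atau symb} to obtain the symbolic representation
$$
e^{i\tau T_p} = \op\prt{e_\otimes^{i\tau p}}, \qquad e_\otimes^{i\tau p} \in W^{\rho,\infty}S^0_{1-\delta,\delta}(\d^*\times \tilde{\d}),
$$
whose hypotheses ($p \in \Gamma^{\delta}_\rho$ and $\IM(p) \in \Gamma^{\tilde{\delta}}_0$ with $\tilde{\delta}\leq 0$) are exactly the ones assumed in the corollary. This step reduces the mapping problem to the continuity of a pseudodifferential operator whose symbol lies in an \emph{exotic} class with only limited regularity in the spatial variable.

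Second, I would feed $e_\otimes^{i\tau p}$ into Theorem \ref{Cont of lim reg exotic sym_thm for para symb} from the Appendix with exotic-symbol parameters $(1-\delta,\delta)$. That theorem is the limited-regularity analogue of the classical Calder\'on--Vaillancourt / Fefferman--Stein continuity statement for operators of type $(1-\delta,\delta)$ and order $0$, and it delivers directly the two estimates of the corollary with constants controlled by the semi-norm ${}^{1-\delta,\delta}M^{0}_{0}(e_\otimes^{i\tau p};1)$. The losses $\frac{\delta}{2}$ on Zygmund spaces and $(\frac{1}{2}-\frac{1}{p})\delta$ on $W^{s,p}$ are the expected ones: no loss at the $L^2$ endpoint (classical Calder\'on--Vaillancourt), a $\delta$-loss at the $H^1/\mathrm{BMO}$ endpoints, and interpolation by Littlewood--Paley or Riesz--Thorin to fill in the scale.

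Since the combination itself is essentially one line, the real content does not lie in this corollary but in Theorem \ref{Cont of lim reg exotic sym_thm for para symb} and the representation formula \eqref{BCH formula_lem est const lie derv_Atau symb eq}. The expected main obstacle, addressed in the Appendix, is adapting the classical continuity theory for $S^0_{\rho',\delta'}$ operators (as in \cite{Stein93,Bourdaud88,Taylor91,Metivier08}) to symbols of Zygmund-type regularity in $x$: the natural strategy is to dyadically decompose $e_\otimes^{i\tau p}$ in the $x$-variable, apply the smooth continuity theorem to each frequency-localized piece, and sum while explicitly tracking the Fr\'echet semi-norms of Definition \ref{BCH formula_def para hyperbolic flow_def nonreg symb}. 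The delicate point is to ensure that the final bound depends only on ${}^{1-\delta,\delta}M^{0}_{0}(\cdot;1)$ rather than on semi-norms involving higher derivatives in $x$, which is exactly the form required for the bootstrap and implicit-function arguments carried out later on the gauge transform.
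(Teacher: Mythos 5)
Your proposal matches the paper's proof exactly: the corollary is stated in the text as the immediate consequence of combining Proposition \ref{BCH formula_lem est const lie derv_prop Atau symb} (which places $e_\otimes^{i\tau p}$ in $W^{\rho,\infty}S^0_{1-\delta,\delta}$) with Theorem \ref{Cont of lim reg exotic sym_thm for para symb} applied with order $m=0$ and exotic parameters $(1-\delta,\delta)$, giving exactly the losses $(\frac{1}{2}-\frac{1}{p})\delta$ and $\frac{\delta}{2}$. Your tangential description of the internals of the Appendix theorem is slightly imprecise (the Littlewood--Paley decomposition there is carried out in the frequency variable $\xi$ rather than in $x$, and the Zygmund-endpoint loss is $\delta/2$ rather than $\delta$ thanks to the $TT^*$ iteration), but neither affects the one-line combination that constitutes the actual proof of the corollary.
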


The key commutation and conjugation result is given by the following proposition, again we give the general result given in \cite{Ayman20} but Remark \ref{BCH formula_remark self contained} applies here too.

\begin{proposition}\label{BCH formula_def para hyperbolic flow_prop com-conjg}
Consider two real numbers $\delta < 1$, $\rho \geq 0$ and a symbol $p\in \Gamma^{\delta}_\rho(\d)$ such that:
\[
\IM(p)=\frac{p-p^*}{2i}\in \Gamma^{\tilde{\delta}}_0(\d), \text{ with } \tilde{\delta}\leq 0.
\]
Let $e^{i\tau T_p},\tau \in \r$ be the flow map defined by Proposition \ref{BCH formula_def para hyperbolic flow} and take a symbol $b\in \Gamma^{\beta}_{\rho}(\d),\beta \in \r$ then we have:
\begin{enumerate}
\setcounter{enumi}{4}
\item For $\rho \geq 1$, there exists $b_\tau^p \in  W^{\rho,\infty} S^\beta_{1-\delta,\delta}(\d)$ such that:
\begin{align}\label{BCH formula_def para hyperbolic flow_prop com-conjg_eq def cong}
e^{i\tau T_p} \circ T_b \circ e^{-i\tau T_p}&=\op(b_\tau^p).
\end{align}
Moreover we have the estimates:
\begin{equation}\label{BCH formula_def para hyperbolic flow_prop com-conjg_est symbolic calc conjg}
\norm{\op(b_\tau^p)-\sum_{k=0}^{\lceil \rho-1 \rceil}\frac{\tau^k}{k!}\mathfrak{L}^k_{iT_p}T_b}_{H^s\rightarrow H^{s-\beta-\lceil \rho \rceil \delta+\rho}}\leq C_\rho M_\rho^\beta(b) M_\rho^\delta(p)^{\lceil \rho \rceil},
\end{equation}
\begin{equation}\label{BCH formula_def para hyperbolic flow_rem on tame seminorm est_eq1}
{}^{1-\delta,\delta}H^\beta_\rho(b_\tau^p;k)\leq C_{\rho,k} e^{\tau CM_0^{0}(\IM(p))} [H^\beta_\rho(b;k)+H^\beta_\rho(b;k)H^{\delta}_\rho(p;k)], \ k\in \n,
\end{equation}
where $C_{\rho,k}$ is a constant depending only on $\rho$ and $k$.
\item There exists ${}^c b_\tau^p \in  W^{\rho-1,\infty} S^{\beta+\delta-1}_{1-\delta,\delta}(\d)$ such that:
\begin{align}\label{BCH formula_def para hyperbolic flow_prop com-conjg_eq def com}
[e^{i\tau T_p}, T_b]&=e^{i\tau T_p} \op({}^c b_\tau^p) \iff \op({}^c b_\tau^p)=T_b-\op(b_{-\tau}^p).
\end{align}
Moreover we have the estimates:
\begin{equation}\label{BCH formula_def para hyperbolic flow_prop com-conjg_est symbolic calc com}
\norm{ \op({}^c b_\tau^p)-\sum_{k=1}^{\lceil \rho-1 \rceil}(-1)^{k-1}\frac{\tau^k}{k!}\mathfrak{L}^k_{iT_p}T_b}_{H^s\rightarrow H^{s-\beta-\lceil \rho \rceil \delta+\rho}}\leq C_\rho M_\rho^\beta(b) M_\rho^\delta(p)^{\lceil \rho \rceil},
\end{equation}
\begin{equation}\label{BCH formula_def para hyperbolic flow_prop com-conjg_est seminorm com}
{}^{1-\delta,\delta}H^{\beta+\delta-1}_{\rho-1}({}^c b_\tau^p;k)\leq C_{\rho,k} e^{\tau CM_0^{0}(\IM(p))} [H^\beta_\rho(b;k)+H^\beta_\rho(b;k)H^{\delta}_\rho(p;k)], k\in \n,
\end{equation} 
where $C_{\rho,k}$ is a constant depending only on $\rho$ and $k$.
\end{enumerate}
The link between $b_\tau^p$ and ${}^c b_\tau^p$ is given by the following:
\[
\op({}^c b_\tau^p)=\int_0^\tau e^{irT_p} T_{\mathfrak{L}_{ip}b}  e^{-irT_p}dr=\int_0^\tau \op(\mathfrak{L}_{iT_p}T_b)_r^pdr,
\]
where $\mathfrak{L}_{ip}b$ is the paradifferential symbol associated to $\mathfrak{L}_{iT_p}T_b$ by Theorem \ref{paracomposition_Notions of microlocal analysis_Paradifferential Calculus_symbolic calculus para precised}.

\end{proposition}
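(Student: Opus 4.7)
The plan is to define $A(\tau) := e^{i\tau T_p}\, T_b\, e^{-i\tau T_p}$, derive its $\tau$-ODE, use iterated Duhamel to produce the Lie series, and then read off the symbol class and seminorm estimates from the resulting expression. First, $A(\tau)$ is well-defined as a bounded operator from $H^s$ to $H^{s-\beta}$ by composing the Sobolev-continuity of $e^{\pm i\tau T_p}$ from Proposition \ref{BCH formula_def para hyperbolic flow} with the paradifferential continuity of $T_b$. Differentiating in $\tau$ yields the Heisenberg-type equation $\partial_\tau A(\tau) = [iT_p, A(\tau)]$ with $A(0) = T_b$, whose iterated Duhamel solution is
\begin{equation*}
A(\tau) = \sum_{k=0}^{N} \frac{\tau^k}{k!}\,\mathfrak{L}^k_{iT_p} T_b + R_N(\tau),
\end{equation*}
where $R_N(\tau)$ is an $(N{+}1)$-fold simplex integral of $e^{isT_p}\,\mathfrak{L}^{N+1}_{iT_p}T_b\,e^{-isT_p}$. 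Choosing $N = \lceil \rho - 1 \rceil$ and using that each $\mathfrak{L}_{iT_p}$ formally gains $1-\delta$ orders by paradifferential symbolic calculus, the remainder maps $H^s$ to $H^{s-\beta-\lceil\rho\rceil\delta+\rho}$ with norm $\leq C_\rho M^\beta_\rho(b)\,M^\delta_\rho(p)^{\lceil\rho\rceil}$, which is exactly \eqref{BCH formula_def para hyperbolic flow_prop com-conjg_est symbolic calc conjg}.

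To place $b_\tau^p$ in $W^{\rho,\infty}S^\beta_{1-\delta,\delta}$ together with the seminorm bound \eqref{BCH formula_def para hyperbolic flow_rem on tame seminorm est_eq1}, I would invoke the Beals-type characterization of Lemma \ref{BCH formula_def para hyperbolic flow_lem Beals seminorm estimates adapted}, whose seminorms ${}^{1-\delta,\delta}H^m_{n,s}$ are precisely iterated operator norms of commutators with $\tfrac{1}{i}\tfrac{d}{dx}$ and $ix$. Applying the triple-product Leibniz rule $\mathfrak{L}_X(ABC) = (\mathfrak{L}_X A)BC + A(\mathfrak{L}_X B)C + AB(\mathfrak{L}_X C)$ iteratively to $A(\tau) = e^{i\tau T_p}T_b e^{-i\tau T_p}$, every iterated commutator of $A(\tau)$ splits into a sum of compositions of three kinds of factors: (i) $e^{\pm i\tau T_p}$, (ii) $T_b$, and (iii) iterated commutators $\mathfrak{L}_X^j e^{\pm i\tau T_p}$. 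Factors of type (iii) are controlled by Proposition \ref{BCH formula_lem est const lie derv_prop Atau symb}, which places $e_\otimes^{i\tau p}$ in $W^{\rho,\infty}S^0_{1-\delta,\delta}$ and hence --- again by Lemma \ref{BCH formula_def para hyperbolic flow_lem Beals seminorm estimates adapted} --- yields the required operator norm bounds polynomially in $H^\delta_\rho(p;\cdot)$ and exponentially in $\tau M^0_0(\IM p)$, through the energy-estimate constant of Proposition \ref{BCH formula_def para hyperbolic flow}. The linear dependence on $b$ in \eqref{BCH formula_def para hyperbolic flow_rem on tame seminorm est_eq1} follows automatically from $T_b$ appearing linearly in the three-factor composition.

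The commutator claim (6) is then read off algebraically: the identity $e^{i\tau T_p}\op({}^c b_\tau^p)=[e^{i\tau T_p},T_b]$ is equivalent to $\op({}^c b_\tau^p) = T_b - \op(b_{-\tau}^p)$, and differentiating in $\tau$ while plugging in the ODE for $A(\tau)$ from the first step produces the integral representation
\begin{equation*}
\op({}^c b_\tau^p) = \int_0^\tau \op(\mathfrak{L}_{iT_p}T_b)_r^p\,dr.
\end{equation*}
Since $\mathfrak{L}_{iT_p}T_b$ is paradifferential of order $\beta+\delta-1$ (one $x$-derivative consumed by the commutator), applying part (5) to the integrand and integrating in $r$ yields both the symbol class $W^{\rho-1,\infty}S^{\beta+\delta-1}_{1-\delta,\delta}$ and the estimates \eqref{BCH formula_def para hyperbolic flow_prop com-conjg_est symbolic calc com}--\eqref{BCH formula_def para hyperbolic flow_prop com-conjg_est seminorm com}.

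The principal technical obstacle is the combinatorial bookkeeping of the $3^k$ summands generated after $k$ iterations of the triple-product Leibniz rule: each summand must be re-recognized as an operator between the correct Sobolev spaces with the right product of $p$- and $b$-seminorms. Avoiding a naive Gronwall loop --- which would produce bounds polynomial in the $p$-seminorms and therefore fail the tame requirement of \eqref{BCH formula_def para hyperbolic flow_rem on tame seminorm est_eq1} crucial for the Nash--Moser-type applications flagged in the strategy section --- is precisely what forces the expansion to be carried out explicitly via the symbol calculus for $e_\otimes^{i\tau p}$ already established in Proposition \ref{BCH formula_lem est const lie derv_prop Atau symb}, rather than by a soft continuous-dependence argument.
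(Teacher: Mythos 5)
The paper does not prove this proposition; it imports it from [Ayman20] (``again we give the general result given in \cite{Ayman20}''). The only hint the paper supplies is the one-line argument in Remark \ref{BCH formula_def para hyperbolic flow_rem on tame seminorm est}, which uses exactly the triple-product Leibniz computation $[ix, e^{i\tau T_p}T_be^{-i\tau T_p}] = [ix,e^{i\tau T_p}]T_be^{-i\tau T_p}+e^{i\tau T_p}[ix,T_b]e^{-i\tau T_p}+e^{i\tau T_p}T_b[ix,e^{-i\tau T_p}]$ you propose, combined with the Beals characterization. So your high-level route — Heisenberg ODE $\partial_\tau A = [iT_p,A]$, iterated Duhamel to produce the truncated Lie series, Beals seminorms plus derivation-Leibniz for symbol class membership, and the reduction of (6) to (5) via $\op({}^cb_\tau^p)=T_b-\op(b^p_{-\tau})$ — is the intended one.

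There are two places where your sketch as written does not actually deliver the stated bounds. First, the remainder exponent: you justify the target space $H^{s-\beta-\lceil\rho\rceil\delta+\rho}$ by saying ``each $\mathfrak{L}_{iT_p}$ formally gains $1-\delta$ orders.'' If that were uniformly true you would gain $\lceil\rho\rceil(1-\delta)$, which is strictly more than the claimed $\rho-\lceil\rho\rceil\delta$ whenever $\rho\notin\n$. The point the heuristic misses is that each commutator also consumes one $x$-derivative of regularity of the symbol, and the symbolic-calculus remainder for symbols of regularity $\rho'$ is only of order $m+m'-\rho'$, not $m+m'-1$. After $\lceil\rho\rceil$ commutators the worst contribution is the order-$(\beta+\delta-\rho)$ remainder of the very first bracket propagated through $\lceil\rho\rceil-1$ further compositions with $T_p$, giving order $\beta+\lceil\rho\rceil\delta-\rho$; this regularity budgeting needs to be spelled out rather than asserted. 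Second, the tame seminorm estimate \eqref{BCH formula_def para hyperbolic flow_rem on tame seminorm est_eq1}: your triple-Leibniz expansion produces, at level $k$, sums of products $\|\mathfrak{L}^{j_1}_X e^{i\tau T_p}\|\cdot\|\mathfrak{L}^{j_2}_X T_b\|\cdot\|\mathfrak{L}^{j_3}_X e^{-i\tau T_p}\|$ with $j_1+j_2+j_3=k$. To land on the form $e^{\tau CM_0^0(\IM p)}H^\beta_\rho(b;k)(1+H^\delta_\rho(p;k))$ — linear, not polynomial, in the $p$-seminorms at level $k$ — you need a tame estimate for the flow itself, namely that ${}^{1-\delta,\delta}H^0(e_\otimes^{i\tau p};j)\lesssim e^{\tau CM_0^0(\IM p)}(1+H^\delta_\rho(p;j))$; Proposition \ref{BCH formula_lem est const lie derv_prop Atau symb} only asserts membership in $W^{\rho,\infty}S^0_{1-\delta,\delta}$ and does not supply that quantitative bound, so you cannot simply ``invoke'' it. You do flag the danger of the naive Gronwall loop in your last paragraph, but the proposed remedy (``carry out the expansion explicitly via the symbol calculus for $e_\otimes^{i\tau p}$'') is not a substitute for actually establishing the tame flow bound; this is the genuinely missing ingredient.

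Two further minor remarks. Your derivation of the integral representation from the Heisenberg ODE actually gives $\op({}^cb_\tau^p)=\int_0^\tau e^{-irT_p}\,\mathfrak{L}_{iT_p}T_b\,e^{irT_p}\,dr$, i.e.\ $\int_0^\tau\op\bigl((\mathfrak{L}_{iT_p}T_b)_{-r}^p\bigr)dr$; the sign of $r$ in the conjugation is opposite to what the proposition displays, so it is worth re-deriving rather than transcribing. And in reducing part (6) to part (5) you must apply part (5) with $b$ replaced by $\mathfrak{L}_{ip}b$, which has regularity $\rho-1$ and order $\beta+\delta-1$; for $\rho$ close to $1$ the hypothesis $\rho\geq 1$ of part (5) is then at the edge, and the additional error $\op(\mathfrak{L}_{iT_p}T_b)-T_{\mathfrak{L}_{ip}b}$ from Theorem \ref{paracomposition_Notions of microlocal analysis_Paradifferential Calculus_symbolic calculus para precised} still needs to be absorbed.
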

\begin{remark}\label{BCH formula_def para hyperbolic flow_rem on prop com-conjg}
\begin{itemize}
\item It is important to notice that the main result of this proposition is the factorisation of the $e^{i\tau T_p}$ terms in \eqref{BCH formula_def para hyperbolic flow_prop com-conjg_eq def cong} and \eqref{BCH formula_def para hyperbolic flow_prop com-conjg_eq def com} where the right hand sides contain symbols in the usual classes. This was not apriori the case of the left hand sides containing $e^{i\tau T_p}$. In other words we study the stability of $\Gamma^m_\rho$ under the conjugation by $e^{i\tau T_p}$.

\item In the language of pseudodifferential operators, $T_{b_\tau^p}$ is the asymptotic sum of the series $(\frac{\tau^k}{k!}\mathfrak{L}^k_{iT_p}T_b)$ that is the Baker-Campbell-Hausdorff formal series. Though $T_{b_\tau^p}$ is not necessarily equal to this sum, for this sum need not converge.
\end{itemize}
\end{remark}

\begin{remark}\label{BCH formula_def para hyperbolic flow_rem on tame seminorm est}
We would like to note that in the special case $\delta=0$ we have the refined tame estimates for $k\in \n$:
\begin{equation}\label{BCH formula_def para hyperbolic flow_rem on tame seminorm est_eq1}
M^\beta_0(\partial^k_\xi b_\tau^p;0)\leq \sum^k_{j=0}\sum^j_{l=0} \binom{k}{j}\binom{j}{l}M^0_0(\partial^{k-j}_\xi e^{i\tau p}_\otimes;0)M^\beta_0(\partial^{j-l}_\xi b_\tau^p;0)M^0_0(\partial^l_\xi  e^{-i\tau p}_\otimes;0).
\end{equation}
We won't explicitly use the tameness in our proof as we avoid using a Nash-Moser type scheme but it is worth noting that implicitly it is this condition that ensures that the constructions in Section \ref{glob CP DBeq_sec impl constr symb} converge, for more details on the necessity of this condition we refer to the following complete and instructive article by Hamilton \cite{Hamilton82}. 
\begin{proof}
This is the consequences of the Leibniz formula combined with the  computation of $[ix,b^p_\tau]$:
\[
[ix,e^{i\tau T_p} T_{b}e^{-i\tau T_p}]= [ix,e^{i\tau T_p}]T_b e^{-i\tau T_p}+e^{i\tau T_p} [ix,T_b]e^{-i\tau T_p}+e^{i\tau T_p} T_b[ix,e^{-i\tau T_p}].
\]
\end{proof}
\end{remark}

The different Gateaux derivatives of the operators defined above are given by the following propositions.
\begin{proposition}\label{BCH formula_prop diff Atau param}
Consider two real numbers $\delta < 1$, $\rho \geq 0$, two symbols $p,p'\in \Gamma^{\delta}_{\rho}(\d)$ such that,
\[
\IM(p)=\frac{p-p^*}{2i}\in \Gamma^{\tilde{\delta}}_0(\d),\IM(p')=\frac{p'-p'^*}{2i}\in \Gamma^{\tilde{\delta}}_0(\d), \ \tilde{\delta}\leq 0.
\]
 Let $e^{i\tau T_p},e^{i\tau p'},\tau \in \r$ be the flow maps defined by Proposition \ref{BCH formula_def para hyperbolic flow}, then for $\tau \in \r$ we have:
\begin{equation}\label{BCH formula_prop diff Atau param_eq1}
 e^{i\tau T_p}-e^{i\tau T_{p'}}=\int_0^\tau e^{i(\tau-r) T_p}T_{ip'- p}e^{ir T_{p'}}dr.
\end{equation}
Another way to express this is with the Gateaux derivative of $p \mapsto e^{i\tau T_p}$ on the Fr\'echet space $\Gamma^{\delta}_{\rho}(\d)$ is given by:
\begin{equation}\label{BCH formula_prop diff Atau param_eq2}
D_p e^{i\tau T_p}(h)=\int_0^\tau e^{i(\tau-r) T_p}T_{ih}e^{ir T_p}dr.
\end{equation}

Moreover consider an open interval $I\subset \r$, and a symbols $p\in C^1(I,\Gamma^{\delta}_{\rho}(\d))$ such that for all $z\in I$:
\[
\IM(p(z))=\frac{p-p^*}{2i}\in \Gamma^{\tilde{\delta}}_0(\d),
\]
 Let $e^{i\tau T_p},\tau \in \r$ be the flow map defined by Proposition \ref{BCH formula_def para hyperbolic flow} then for $\tau \in \r, z\in I$ we have:
\begin{equation}\label{BCH formula_prop diff Atau param_eq3}
\partial_z e^{i\tau T_p}=\int_0^\tau e^{i(\tau-r) T_p}T_{i\partial_z p}e^{ir T_p}dr.
\end{equation}
\end{proposition}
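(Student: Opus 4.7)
The three identities are instances of the standard Duhamel interpolation between two hyperbolic flows, adapted to the paradifferential setting of Proposition \ref{BCH formula_def para hyperbolic flow}. The plan is to prove \eqref{BCH formula_prop diff Atau param_eq1} first as the master identity, then to read off \eqref{BCH formula_prop diff Atau param_eq2} and \eqref{BCH formula_prop diff Atau param_eq3} as its Gateaux-style corollaries by finite-difference arguments, using the continuity estimate \eqref{BCH formula_def para hyperbolic flow_est diff flow} to justify the passage to the limit.

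For the master identity, I would consider, for fixed $\tau \in \r$ and $h_0\in H^{s+\delta}$, the interpolating curve
\begin{equation*}
F(r)=e^{i(\tau-r)T_p}\,e^{irT_{p'}}h_0, \qquad r\in[0,\tau],
\end{equation*}
whose endpoints are $F(0)=e^{i\tau T_p}h_0$ and $F(\tau)=e^{i\tau T_{p'}}h_0$. Using point (2) of Proposition \ref{BCH formula_def para hyperbolic flow} (i.e. that $iT_p$ commutes with $e^{i(\tau-r)T_p}$), together with the definition \eqref{BCH formula_def para hyperbolic flow_eq CP} applied to each factor, one computes
\begin{equation*}
F'(r)=e^{i(\tau-r)T_p}\,\bigl(iT_{p'}-iT_p\bigr)\,e^{irT_{p'}}h_0 =e^{i(\tau-r)T_p}\,T_{i(p'-p)}\,e^{irT_{p'}}h_0.
\end{equation*}
Integrating from $0$ to $\tau$ and applying the fundamental theorem of calculus yields \eqref{BCH formula_prop diff Atau param_eq1} (up to the sign/notation convention of the paper). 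The continuity of the integrand in $r$ as a map into $H^s$ follows from point (1) of Proposition \ref{BCH formula_def para hyperbolic flow}, the boundedness of $T_{i(p'-p)}\colon H^{s+\delta}\to H^s$ from paradifferential calculus, and the continuity of the two flows in $r$, so the integral is well defined.

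For \eqref{BCH formula_prop diff Atau param_eq2}, I would apply \eqref{BCH formula_prop diff Atau param_eq1} with $p' = p+\varepsilon h$, divide by $\varepsilon$, and pass to the limit $\varepsilon\to 0$. Inside the integral the symbol reduces to $T_{ih}$ directly; the flow $e^{irT_{p+\varepsilon h}}$ converges to $e^{irT_p}$ in the strong operator topology $\mathscr{L}(H^{s+\delta},H^s)$ thanks to estimate \eqref{BCH formula_def para hyperbolic flow_est diff flow} (which gives a loss of $\delta$ derivatives and a bound uniform in $\varepsilon$ small), and the outer flow $e^{i(\tau-r)T_p}$ is independent of $\varepsilon$. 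Dominated convergence in $r\in[0,\tau]$ then produces the limit. The analogous argument, with $p(z+\varepsilon)$ in place of $p+\varepsilon h$, gives \eqref{BCH formula_prop diff Atau param_eq3}: the finite difference $\varepsilon^{-1}(p(z+\varepsilon)-p(z))$ converges to $\partial_z p$ in $\Gamma^\delta_\rho$ by the $C^1$ hypothesis on $z\mapsto p(z)$, and the paradifferential operation $q\mapsto T_q$ is continuous from $\Gamma^\delta_0$ into $\mathscr{L}(H^{s+\delta},H^s)$.

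The main technical point—and really the only subtle one—is the bookkeeping of Sobolev indices when passing to the limit: since $T_{i(p'-p)}$, $T_{ih}$, and $T_{i\partial_z p}$ are of order $\delta$, the identities have to be interpreted as equalities between bounded operators $H^{s+\delta}\to H^s$, and the convergence of the flows in \eqref{BCH formula_def para hyperbolic flow_est diff flow} is precisely with the corresponding loss, so the estimates close. Once this is laid out carefully, the rest is a routine application of Proposition \ref{BCH formula_def para hyperbolic flow} and dominated convergence; no additional structural input is required.
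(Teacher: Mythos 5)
Your proof is correct and uses the standard Duhamel interpolation argument (differentiate the interpolating curve $F(r)=e^{i(\tau-r)T_p}e^{irT_{p'}}h_0$ and integrate), which is precisely the technique the paper deploys for its sibling result, Theorem \ref{BCH formula_prop Atau comp}, and is the approach from \cite{Ayman20} that this section summarizes. The only point to flag is that your computation actually yields $e^{i\tau T_{p'}}-e^{i\tau T_p}=\int_0^\tau e^{i(\tau-r)T_p}T_{i(p'-p)}e^{irT_{p'}}\,dr$, so the paper's displayed statement \eqref{BCH formula_prop diff Atau param_eq1} carries a sign/labeling typo in its subscript (it should read $T_{i(p-p')}$ to be consistent both with its own left-hand side and with the downstream identity \eqref{BCH formula_prop diff Atau param_eq2}); you noted this appropriately, and your derivations of \eqref{BCH formula_prop diff Atau param_eq2}--\eqref{BCH formula_prop diff Atau param_eq3} by finite differences with estimate \eqref{BCH formula_def para hyperbolic flow_est diff flow} and dominated convergence, tracking the $\delta$-derivative loss, are exactly the right closing argument.
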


\begin{proposition}\label{BCH formula_prop diff Atau conjg}
Consider two real numbers $\delta < 1$, $\rho >1$, $\rho \notin \n$, and two symbols $p,p'\in \Gamma^{\delta}_{\rho}(\d)$ verifying:
\[
\IM(p)=\frac{p-p^*}{2i}\in \Gamma^{\tilde{\delta}}_0(\d),\IM(p')=\frac{p'-p'^*}{2i}\in \Gamma^{\tilde{\delta}}_0(\d), \ \tilde{\delta}\leq 0.
\]
Let $e^{i\tau T_p},e^{i\tau T_{p'}},\tau \in \r$ be the flow maps defined by Proposition \ref{BCH formula_def para hyperbolic flow} and take a symbol $b\in \Gamma^{\beta}_{\rho}(\d)$ then for $\tau \in \r$ we have:
\begin{align}
\op(b^p_\tau)-\op(b^{p'}_\tau)&=\int_0^\tau e^{i(\tau-r) T_p} \mathscr{L}_{iT_{p-p'}}\op(b^{p'}_r) e^{i(r-\tau)T_p}dr \label{BCH formula_prop diff Atau conjg_eq1}\\
&=i\int_0^\tau \mathscr{L}_{T_{p}-\op(p'^{p}_{\tau-r})}\op((b^{p'}_r)^p_{\tau-r})dr.\label{BCH formula_prop diff Atau conjg_eq2}
\end{align}
Another way to express this is with the Gateaux derivative of $p \mapsto \op(b^p_\tau)$ on the Fr\'echet space $\Gamma^{\delta}_{\rho}(\d)$ is given by:
\begin{equation}\label{BCH formula_prop diff Atau conjg_eq3}
D_p\op(b^p_\tau)(h)=\int_0^\tau\mathscr{L}_{i \op(h^{p}_{\tau-r})}\op(b^{p}_{\tau})dr=\mathscr{L}_{i\int_0^\tau \op(h^{p}_{\tau-r})dr}\op(b^{p}_{\tau}).
\end{equation}
Writing,
$\op({}^c b^{p}_{\tau})=T_b-\op(b^{p}_{-\tau})$, and, $\op({}^c b^{p'}_{\tau})=T_b-\op(b^{p'}_{-\tau})$ we get:
\begin{align}
\op({}^c b^{p}_{\tau})-\op({}^c b^{p'}_{\tau})&=-\int_0^{-\tau} e^{-i(\tau+r)T_p} \mathscr{L}_{iT_{p-p'}}\op(b^{p'}_r) e^{i(\tau+r)T_p}dr \label{BCH formula_prop diff Atau conjg_eq4}\\
&=-\int_0^{-\tau} \mathscr{L}_{iT_{p}-\op(p'^p_{-\tau-r})}\op((b^{p'}_r)_{-\tau-r}^p )dr.\label{BCH formula_prop diff Atau conjg_eq5}
\end{align}
\begin{equation}\label{BCH formula_prop diff Atau conjg_eq6}
D_p\op({}^c b^{p}_{\tau})(h)=-\int_0^{-\tau}\mathscr{L}_{i \op(h^p_{-\tau-r})}\op(v^p_{-\tau})dr=-\mathscr{L}_{i\int_0^{-\tau} \op(h^p_{-\tau-r})dr}\op(v^p_{-\tau}).
\end{equation}
\end{proposition}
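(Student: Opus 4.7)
The plan is to build all six identities from a single telescoping argument based on an auxiliary operator-valued path, and then to obtain the Gateaux derivative by specialising this path and passing to the limit.

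\textbf{Step 1: the interpolating family.} For the first identity \eqref{BCH formula_prop diff Atau conjg_eq1}, I would introduce
\[
G(r)=e^{i(\tau-r)T_p}\,\op(b^{p'}_r)\,e^{-i(\tau-r)T_p},\qquad r\in[0,\tau],
\]
and note immediately that $G(0)=e^{i\tau T_p}T_b e^{-i\tau T_p}=\op(b^p_\tau)$ and $G(\tau)=\op(b^{p'}_\tau)$. Differentiating under the composition, using the defining ODE of Proposition \ref{BCH formula_def para hyperbolic flow} (so that $\partial_r e^{i(\tau-r)T_p}=-iT_p\,e^{i(\tau-r)T_p}$ and $\partial_r e^{-i(\tau-r)T_p}=iT_p\,e^{-i(\tau-r)T_p}$) and the identity $\partial_r\op(b^{p'}_r)=\mathscr{L}_{iT_{p'}}\op(b^{p'}_r)$ (differentiation of the conjugation), the three contributions combine into the commutator
\[
G'(r)=e^{i(\tau-r)T_p}\,\mathscr{L}_{iT_{p'-p}}\op(b^{p'}_r)\,e^{-i(\tau-r)T_p}.
\]
Integrating from $0$ to $\tau$ and using $G(\tau)-G(0)=\op(b^{p'}_\tau)-\op(b^p_\tau)$ yields \eqref{BCH formula_prop diff Atau conjg_eq1}.

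\textbf{Step 2: from flows to symbols.} For \eqref{BCH formula_prop diff Atau conjg_eq2}, I would push the flows through the commutator using point~(5) of Proposition \ref{BCH formula_def para hyperbolic flow_prop com-conjg}. Since $T_p$ commutes with $e^{i(\tau-r)T_p}$, one has $e^{i(\tau-r)T_p}T_{p-p'}e^{-i(\tau-r)T_p}=T_p-\op(p'^{p}_{\tau-r})$; and by the same proposition $e^{i(\tau-r)T_p}\op(b^{p'}_r)e^{-i(\tau-r)T_p}=\op((b^{p'}_r)^{p}_{\tau-r})$. Substituting into the integrand of \eqref{BCH formula_prop diff Atau conjg_eq1} and using that conjugation is a Lie algebra morphism gives \eqref{BCH formula_prop diff Atau conjg_eq2}. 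The continuity estimates provided by Proposition \ref{BCH formula_def para hyperbolic flow_prop com-conjg} justify these symbolic manipulations on the relevant Sobolev scale.

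\textbf{Step 3: Gateaux derivative.} For \eqref{BCH formula_prop diff Atau conjg_eq3}, I would specialise by choosing $p'=p-\varepsilon h$ in \eqref{BCH formula_prop diff Atau conjg_eq1}, divide by $\varepsilon$, and let $\varepsilon\to 0$. The limit of $\op(b^{p-\varepsilon h}_r)$ is $\op(b^p_r)$ by the continuity of $p\mapsto \op(b^p_r)$ (again Proposition \ref{BCH formula_def para hyperbolic flow_prop com-conjg}). This gives
\[
D_p\op(b^p_\tau)(h)=\int_0^\tau e^{i(\tau-r)T_p}\,\mathscr{L}_{iT_h}\op(b^p_r)\,e^{-i(\tau-r)T_p}\,dr,
\]
and pushing the outer conjugations inside the commutator as in Step~2 (now using $e^{i(\tau-r)T_p}T_h e^{-i(\tau-r)T_p}=\op(h^p_{\tau-r})$ and $e^{i(\tau-r)T_p}\op(b^p_r)e^{-i(\tau-r)T_p}=\op(b^p_\tau)$) yields the stated formula. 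Since $\op(b^p_\tau)$ is independent of $r$, the integral may be moved inside the commutator, giving the second equality.

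\textbf{Step 4: commutator versions.} For \eqref{BCH formula_prop diff Atau conjg_eq4}--\eqref{BCH formula_prop diff Atau conjg_eq6}, the relation $\op({}^c b^{p}_{\tau})=T_b-\op(b^{p}_{-\tau})$ stated in \eqref{BCH formula_def para hyperbolic flow_prop com-conjg_eq def com} reduces each identity to its conjugation counterpart with $\tau$ replaced by $-\tau$ (and an overall sign). The only real work is the change of variable in the integrals, after which \eqref{BCH formula_prop diff Atau conjg_eq4}--\eqref{BCH formula_prop diff Atau conjg_eq6} follow directly from Steps 1--3. The main obstacle, if any, is not algebraic but analytic: one must verify that the derivative of $r\mapsto\op(b^{p'}_r)$ and the $\varepsilon\to 0$ limit can be taken in the Fr\'echet topology of $\Gamma^\beta_\rho$ under the hypothesis $\rho>1$, $\rho\notin\mathbb{N}$; this regularity assumption is precisely what allows us to apply the symbolic calculus estimates \eqref{BCH formula_def para hyperbolic flow_prop com-conjg_est symbolic calc conjg} and \eqref{BCH formula_def para hyperbolic flow_prop com-conjg_est symbolic calc com} of Proposition \ref{BCH formula_def para hyperbolic flow_prop com-conjg} so that all the integrands define bounded operators on the relevant Sobolev scale.
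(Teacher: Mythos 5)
Your proposal is correct. The interpolating family $G(r)=e^{i(\tau-r)T_p}\op(b^{p'}_r)e^{-i(\tau-r)T_p}$ with $G(0)=\op(b^p_\tau)$, $G(\tau)=\op(b^{p'}_\tau)$, and the Duhamel/telescoping identity $G(\tau)-G(0)=\int_0^\tau G'(r)\,dr$ is exactly the right device: the three contributions to $G'$ recombine into $e^{i(\tau-r)T_p}\mathscr{L}_{iT_{p'-p}}\op(b^{p'}_r)e^{-i(\tau-r)T_p}$ because $iT_p$ commutes with its own flow and $\partial_r\op(b^{p'}_r)=\mathscr{L}_{iT_{p'}}\op(b^{p'}_r)$. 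Pushing the outer conjugation through the Lie bracket, using $e^{i(\tau-r)T_p}T_{p-p'}e^{-i(\tau-r)T_p}=T_p-\op(p'^p_{\tau-r})$ and the group law $e^{i(\tau-r)T_p}\op(b^{p'}_r)e^{-i(\tau-r)T_p}=\op((b^{p'}_r)^p_{\tau-r})$ gives \eqref{BCH formula_prop diff Atau conjg_eq2}; specialising $p'=p+\varepsilon h$ and letting $\varepsilon\to 0$ collapses the integrand further because $e^{i(\tau-r)T_p}\op(b^p_r)e^{-i(\tau-r)T_p}=\op(b^p_\tau)$, which no longer depends on $r$ and lets the integral be moved inside the bracket; and the ${}^c b$ statements follow mechanically from $\op({}^c b^p_\tau)=T_b-\op(b^p_{-\tau})$ with $\tau\mapsto-\tau$ and a sign. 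The paper itself defers the proof to the companion reference \cite{Ayman20}, so there is no in-text proof to compare against; but this interpolation-of-paths argument is the canonical one for such difference formulas and almost certainly coincides with the intended proof. Two minor remarks: (i) you correctly flag that the justification of differentiating under the integral and the $\varepsilon\to 0$ limit rests on the continuity estimates of Proposition \ref{BCH formula_def para hyperbolic flow_prop com-conjg}, which is precisely where $\rho>1$ enters; (ii) the symbol $v$ appearing in \eqref{BCH formula_prop diff Atau conjg_eq6} is evidently a typo for $b$ in the paper, and your derivation yields $\op(b^p_{-\tau})$ there, which is the correct object.
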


We now study the composition of two different flows.
\begin{theorem}\label{BCH formula_prop Atau comp}
Consider two real numbers $\delta < 1$, $\rho \geq 0$, two symbols $p,p'\in \Gamma^{\delta}_{\rho}(\d)$ such that,
\[
\IM(p)=\frac{p-p^*}{2i}\in \Gamma^{\tilde{\delta}}_0(\d),\IM(p')=\frac{p'-p'^*}{2i}\in \Gamma^{\tilde{\delta}}_0(\d), \ \tilde{\delta}\leq 0.
\]
 Then for $\tau \in \r$ we have:
\[
e^{i\tau T_p}e^{i\tau T_{p'}}=e^{i\tau T_p+i\int_0^r\op(p')^p_rdr}. 
\]
\end{theorem}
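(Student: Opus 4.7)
The plan is to recognise both sides of the identity as solutions of the same linear Cauchy problem with the same initial datum, and to conclude by uniqueness. I would first set $U(\tau):=e^{i\tau T_p}e^{i\tau T_{p'}}$. Using the product rule and the defining equations of the two flows from Proposition \ref{BCH formula_def para hyperbolic flow}, one obtains
$$\partial_\tau U(\tau) = iT_p\,U(\tau) + e^{i\tau T_p}\,iT_{p'}\,e^{-i\tau T_p}\,U(\tau),\qquad U(0)=\mathrm{Id}.$$
The conjugation formula of Proposition \ref{BCH formula_def para hyperbolic flow_prop com-conjg} identifies the middle factor as a paradifferential operator, $e^{i\tau T_p}T_{p'}e^{-i\tau T_p}=\op(p'^p_\tau)$, so that $U$ satisfies the non-autonomous linear equation
$$\partial_\tau U(\tau) = i\bigl[T_p + \op(p'^p_\tau)\bigr]U(\tau),\qquad U(0)=\mathrm{Id}.$$

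On the other side, the notation $V(\tau):=e^{i\tau T_p+i\int_0^\tau \op(p'^p_r)\,dr}$ is read, by definition (the upper limit being $\tau$ in place of the printed $r$), as the flow of the Cauchy problem whose generator at time $\tau$ is the $\tau$-derivative of the exponent,
$$\partial_\tau V(\tau) = i\bigl[T_p + \op(p'^p_\tau)\bigr]V(\tau),\qquad V(0)=\mathrm{Id}.$$
Hence $U$ and $V$ solve the same Cauchy problem and the identity will follow from uniqueness.

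To justify this last step, one has to verify the well-posedness of the non-autonomous hyperbolic equation with generator $iT_p+i\op(p'^p_\tau)$. The total symbol is of order $\delta<1$, and the imaginary-part condition required by the energy estimate of Proposition \ref{BCH formula_def para hyperbolic flow} is preserved under conjugation by $e^{i\tau T_p}$: using $(e^{i\tau T_p})^*=e^{-i\tau T_p}+R$ with $R$ of order $\tilde\delta$ from part $(3)$ of that proposition, one gets $\op(p'^p_\tau)^{*}=e^{i\tau T_p}T_{p'}^{*}e^{-i\tau T_p}$ modulo a $\tilde\delta$-regularising remainder, hence $\IM\op(p'^p_\tau)\in\Gamma^{\tilde\delta}_0$ uniformly on compact $\tau$-intervals.

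The main subtlety is precisely the last point: the conjugated operator $\op(p'^p_\tau)$ lies in the exotic class $W^{\rho,\infty}S^{\delta}_{1-\delta,\delta}$ rather than a standard Bony class, so the energy estimate underlying Proposition \ref{BCH formula_def para hyperbolic flow} must be checked in the time-dependent setting for this slightly larger class. Once the $L^2$ continuity from Appendix \ref{Cont of lim reg exotic sym} and the preserved $\IM$-control are combined, a standard Gronwall argument applied to $\|U(\tau)-V(\tau)\|_{H^s}$ yields $U=V$, which is the claimed identity.
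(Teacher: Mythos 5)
Your proof is correct and takes essentially the same approach as the paper: differentiate $U(\tau)=e^{i\tau T_p}e^{i\tau T_{p'}}$, use the conjugation identity $e^{i\tau T_p}T_{p'}e^{-i\tau T_p}=\op(p'^p_\tau)$ from Proposition~\ref{BCH formula_def para hyperbolic flow_prop com-conjg}, and conclude by uniqueness of the resulting Cauchy problem. The paper's proof is more terse (it fixes $h_0$, computes $\partial_\tau[Uh_0]$, and appeals directly to the initial condition), and it relegates the well-posedness of the resulting $\tau$-dependent generator to the remark immediately following the theorem statement; you usefully make that verification explicit, in particular the preservation of the $\IM\in\Gamma^{\tilde\delta}_0$ condition under conjugation, which is indeed what makes the Gronwall/uniqueness argument go through. (Note also that the paper's displayed computation carries a sign typo, $-iT_p$ in place of $+iT_p$; your signs are the correct ones given the convention $\partial_\tau h - iT_p h = 0$.)
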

Strictly speaking we only presented flows that were generated by operators independent of the $\tau$ variable which is not the case of $e^{i\tau T_p+i\int_0^r\op(p')^p_rdr}$. We did so to avoid burdening the presentation, one can see all the results of this section can in verbatim be generalised to operators with Lipschitz dependence on $\tau$ by the usual Cauchy-Lipschitz theorem. 

\begin{proof}
Fix $h_0\in H^s,s\in \r$ and compute:
\[
\partial_{\tau}[e^{i\tau T_p}e^{i\tau T{p'}}h_0]=-iT_p[e^{i\tau T_p}e^{i\tau T_{p'}}h_0]-i[e^{i\tau T_{p}} T_{p'}e^{i\tau T_{p'}}h_0],
\]
thus by Proposition \ref{BCH formula_def para hyperbolic flow_prop com-conjg},
\[
\partial_{\tau}[e^{i\tau T_{p}}e^{i\tau T_{p'}}h_0]=-i(T_p+\op(p')^p_\tau)[e^{i\tau T_p}e^{i\tau T_{p'}}h_0],
\]
and $e^{i\tau T_p}e^{i\tau T_{p'}}h_0(0,\cdot)=h_0(\cdot)$ which gives the desired result.
\end{proof}

\section{Implicit construction of symbols}\label{glob CP DBeq_sec impl constr symb}
In this section with the different theorems permitting the construction of the gauge transforms used in the subsequent sections.
\begin{theorem}\label{glob CP DBeq_sec impl constr symb_th constr comm}
Consider two real numbers $\alpha\geq 1$, $\beta\in \r$ and a symbol $a\in \Gamma_0^\beta(\d)$. Then there exist $B>1$ and a symbol $p\in\Gamma_1^{\beta+1-\alpha}(\d)$ such that, 
\begin{equation}\label{glob CP DBeq_sec impl constr symb_th constr comm_para eq}
\sigma^{B,b}_{p}\otimes i\xi \abs{\xi}^{\alpha-1}-i\xi \abs{\xi}^{\alpha-1}\otimes\sigma^{B,b}_{p} =\sigma^{B,b}_{a},
\end{equation}
where $\otimes$ is the symbol product defined formally by:
\[
\op(p)\circ \op(q)=\op(p\otimes q),  \text{ where }\]
\[
p\otimes q(x,\xi)=\frac{1}{2\pi}\int_{\d \times \hat{\d}}e^{i(x-y).(\xi-\eta)} p(x,\eta)q(y,\xi)dyd\eta,
\]
and $\sigma^{B,b}_{\cdot}$ is a cutoff defining paradifferential operators (cf Definition \ref{paracomposition_Notions of microlocal analysis_Paradifferential Calculus_def para op}).

Moreover we have the estimates:
\begin{equation}\label{glob CP DBeq_sec impl constr symb_th constr comm_est costr comm}
M^{\beta+1-\alpha}_0(\partial_x \sigma^{B,b}_{p};0)\leq \frac{M^{\beta}_0(\sigma^{B,b}_{a};0)}{B[1-(1-\frac{1}{B})^\alpha]},
\end{equation}
\begin{align}\label{glob CP DBeq_sec impl constr symb_th constr comm_est costr comm derv xi}
M^{\beta-\alpha}_0(\partial_\xi\partial_x \sigma^{B,b}_{p};0)&\leq \frac{M^{\beta-1}_0(\partial_\xi\sigma^{B,b}_{a};0)}{B[1-(1-\frac{1}{B})^\alpha]}\\
&+\alpha\frac{(1+\frac{1}{B})^{\alpha-1}-1}{1-(1-\frac{1}{B})^\alpha}M^{\beta+1-\alpha}_0(\partial_x \sigma^{B,b}_{p};0).\nonumber
\end{align}
\end{theorem}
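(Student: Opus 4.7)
The plan is to solve the equation \eqref{glob CP DBeq_sec impl constr symb_th constr comm_para eq} explicitly on the Fourier side. Taking partial Fourier transform in $x$ with dual variable $\zeta$, a direct computation from the definition of $\otimes$ gives, for any symbol $q(x, \xi)$ and the Fourier multiplier $m(\xi) = i\xi|\xi|^{\alpha-1}$,
\[
\widetilde{q \otimes m - m \otimes q}(\zeta, \xi) = \bigl[m(\xi) - m(\xi-\zeta)\bigr]\, \tilde{q}(\zeta, \xi).
\]
Applying this with $q = \sigma^{B,b}_p$, and using $\widetilde{\sigma^{B,b}_p}(\zeta, \xi) = \psi^{B,b}(\zeta, \xi)\, \tilde{p}(\zeta, \xi)$, the commutator equation reduces to
\[
\bigl[m(\xi) - m(\xi-\zeta)\bigr]\psi^{B,b}(\zeta, \xi)\, \tilde{p}(\zeta, \xi) = \psi^{B,b}(\zeta, \xi)\, \tilde{a}(\zeta, \xi).
\]

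On $\mathrm{supp}\,\psi^{B,b}$ we have $|\zeta| \leq (|\xi|-b)/B$, so $\xi$ and $\xi - \zeta$ have the same sign as soon as $B > 1$, and $m(\xi) - m(\xi-\zeta)$ is non-vanishing. I would therefore define
\[
\tilde{p}(\zeta, \xi) := \frac{\tilde{a}(\zeta, \xi)}{m(\xi) - m(\xi - \zeta)}
\]
on $\mathrm{supp}\,\psi^{B,b}$, extended by zero elsewhere. Observe that, to leading order in $s = \zeta/\xi$, $m(\xi) - m(\xi-\zeta) \sim i\alpha \zeta |\xi|^{\alpha - 1}$, so $\partial_x p$ recovers to leading order the standard Cole--Hopf gauge choice $-a/(\alpha|\xi|^{\alpha-1})$; the closed-form expression above thus amounts to summing the associated Neumann correction explicitly, which circumvents the bootstrap/Nash--Moser mechanism alluded to in the introduction.

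For the seminorm estimate \eqref{glob CP DBeq_sec impl constr symb_th constr comm_est costr comm}, the key computation is (for $\xi > 0$, the other case being symmetric)
\[
\frac{i\zeta}{m(\xi) - m(\xi - \zeta)} = \frac{\phi(s)}{\xi^{\alpha-1}}, \qquad \phi(s) := \frac{s}{1 - (1-s)^\alpha}, \quad s := \zeta/\xi.
\]
A short calculation shows $\phi'(s) \geq 0$ for $s \in (-1, 1)$ and $\alpha \geq 1$, so that $|\phi|$ attains its maximum on $|s| \leq 1/B$ at $s = 1/B$, giving $|\phi(s)| \leq \phi(1/B) = 1/(B[1-(1-1/B)^\alpha])$ — which is exactly the constant appearing in \eqref{glob CP DBeq_sec impl constr symb_th constr comm_est costr comm}. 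Combined with the uniform $L^1$-kernel bound for the cutoff $\psi^{B,b}$ developed in \cite{Ayman18}, this yields \eqref{glob CP DBeq_sec impl constr symb_th constr comm_est costr comm}. Estimate \eqref{glob CP DBeq_sec impl constr symb_th constr comm_est costr comm derv xi} follows analogously by differentiating the Fourier representation of $\partial_x \sigma^{B,b}_p$ in $\xi$: differentiating $\tilde{a}$ produces the first summand, while differentiating the multiplier $\phi(s)/\xi^{\alpha-1}$ produces the second, where the factor $\alpha[(1+1/B)^{\alpha-1}-1]/[1-(1-1/B)^\alpha]$ arises as the worst-case ratio $\sup_{|s| \leq 1/B}|\xi \partial_\xi[\phi(s)/\xi^{\alpha-1}]|$ normalized by $\phi(1/B)/\xi^{\alpha-1}$, then re-expressed in terms of $\partial_x \sigma^{B,b}_p$ via the first estimate.

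The main technical obstacle is converting the pointwise bound on the Fourier-side multiplier $\phi(\zeta/\xi)/|\xi|^{\alpha-1}$ into a pointwise $L^\infty_x$ bound on the symbol itself; this is exactly what the careful choice of cutoff $\psi^{B,b}$ from \cite{Ayman18} is designed to provide, via a uniform-in-$\xi$ $L^1$ control of the associated convolution kernel. The regularity statement $p \in \Gamma^{\beta+1-\alpha}_1$ then follows from a bootstrap argument on the explicit formula, handling each additional $x$- and $\xi$-derivative by the same mechanism while tracking the growth of seminorms.
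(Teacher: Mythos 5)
Your proposal is correct as far as it goes, but it takes the explicit, closed-form route that the paper considers and then deliberately sets aside. After writing down essentially your formula \eqref{glob CP DBeq_sec impl constr symb_th constr comm_proof_explicit formula symb} and noting that together with the resonance Lemma \ref{glob CP DBeq_sec impl constr symb_resonnance function lem} it yields the desired symbol-class membership, the paper states: ``While this approach definitely works it is limited to this linear in $p$ case,'' and then switches to a different proof. The paper's argument constructs a right parametrix $E_{\mathrm{approx}}$ (the leading-order Cole--Hopf choice $\partial_x p \approx a/(\alpha\abs{\xi}^{\alpha-1})$), shows $L E_{\mathrm{approx}} = \sigma^{B,b}_{\cdot}(\mathrm{Id}-r)$ with $M_0^{\beta+1-\alpha}(r(a);0) \leq (C/B) M_0^{\beta+1-\alpha}(\sigma^{B,b}_a;0)$, and sums the Neumann series $E=\sum E_{\mathrm{approx}} r^k$; the estimates \eqref{glob CP DBeq_sec impl constr symb_th constr comm_est costr comm} and \eqref{glob CP DBeq_sec impl constr symb_th constr comm_est costr comm derv xi} then come not from direct computation on the explicit symbol but from Lemma \ref{glob CP DBeq_sec impl constr symb_th constr comm_proof_lem est comm constr}, which views the defining relation as applying an elliptic Fourier multiplier in $x$ to $\sigma^{B,b}_{\partial_x p}$ and bounds it below by $\frac{B}{\alpha}\bigl[1-(1-\tfrac{1}{B})^\alpha\bigr]\abs{\xi}^{\alpha-1}$; the $\xi$-regularity is then bootstrapped by commuting with $ix$, i.e.\ \eqref{glob CP DBeq_sec impl constr symb_th constr comm_proof_eq1}. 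You are right that your closed-form resolvent is exactly the sum of that Neumann series (by the uniqueness Corollary \ref{glob CP DBeq_sec impl constr symb_cor lem est comm constr}), and your multiplier function $\phi(s)=s/[1-(1-s)^\alpha]$ with $\phi(1/B)$ reproducing the constant is the same ellipticity bound in thinner disguise. The essential trade-off is the one the paper flags: the closed-form inversion buys nothing extra here and does not survive the passage to the nonlinear problems that follow — Corollary \ref{glob CP DBeq_sec impl constr symb_th constr comm+conjugation}, Theorem \ref{glob CP DBeq_sec impl constr symb_thm constr gauge trsf} where $p$ appears inside $e^{ip}_\otimes$, and the time-derivative version Theorem \ref{glob CP DBeq_sec impl constr symb_thm constr gauge trsf with time deriv} — where no explicit Fourier-side formula exists and only the parametrix--Neumann--bootstrap machinery carries over. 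One small caution: both in your write-up and in Lemma \ref{glob CP DBeq_sec impl constr symb_th constr comm_proof_lem est comm constr}, passing from a pointwise bound on the $\eta$-side multiplier to the claimed $L^\infty_x$ seminorm bound implicitly uses a uniform $L^1$ control on the associated kernel restricted to the frequency cone cut out by $\psi^{B,b}$; you attribute this to \cite{Ayman18}, which is consistent with the paper's own use of it, but it should be stated rather than implied, and it is exactly this point that makes the sharp constant $\phi(1/B)$ (as opposed to $\phi(1/B)$ times an absolute constant) nontrivial.
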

The choice of the same cut-off parameters in the right hand side and left hand side of \eqref{glob CP DBeq_sec impl constr symb_th constr comm_para eq} is not immediate, indeed by the general rule  of composition of paradifferential operators given  in Proposition \ref{paracomposition_Notions of microlocal analysis_Paradifferential Calculus_symbolic calculus para precised}, the cut-off on the left hand side is given by $B\star B=\frac{B^2}{2B+1}>B$. But in the specific case where one of the operators is a Fourier multiplier we have this refined property where the cut-off on the left hand side is indeed given by $B$.
\begin{proof}
First the case $\alpha=1$ has the immediate solution with the choice of $p$ as the  primitive of $\sigma^{B,b}_{a}$ in the $x$ variable. Henceforth we suppose $\alpha>1$.

Formally $p$ should be given explicitly given by
\begin{equation}\label{glob CP DBeq_sec impl constr symb_th constr comm_proof_explicit formula symb}
\fr_{x\rightarrow\eta }(p)(\eta,\xi)=\frac{\psi^{B,b}(\eta,\xi)\fr_{x\rightarrow\eta }(a)(\eta,\xi)}{i\prt{\abs{\xi}^{\alpha-1}\xi-(\eta+\xi)\abs{\xi+\xi}^{\alpha-1}}},
\end{equation}
where $\psi^{B,b}$ is the cut-off defining the regularisation $\sigma^{B,b}_{\cdot}$. To better understand the denominator we recall Lemma $2.1$ from \cite{Molinet15} on the resonance function.
\begin{lemma}[Lemma $2.1$ from \cite{Molinet15}]\label{glob CP DBeq_sec impl constr symb_resonnance function lem}
Let us define the resonance function of order $2$ associated with the weakly dispersive Burgers equation:
\begin{equation}\label{glob CP DBeq_sec impl constr symb_resonnance function def}
\Omega_{\alpha}(\xi_1,\xi_2)=(\xi_1+\xi_2)\abs{\xi_1+\xi_2}^{\alpha-1}-\xi_1\abs{\xi_1}^{\alpha-1}-\xi_2\abs{\xi_2}^{\alpha-1}, \text{ for } \xi_1,\xi_2 \in \r.
\end{equation}
For $\xi_1,\xi_2,\xi_3 \in \r$ define the quantities $\abs{\xi_{max}}\geq \abs{\xi_{med}}\geq \abs{\xi_{min}}$  to be the maximum, median, minimum of $\abs{\xi_1},\abs{\xi_2}$ and $\abs{\xi_3}$ respectively.

Then for $\alpha>1$ and $\xi_3=-(\xi_1+\xi_2)$,
\begin{equation}
\abs{\Omega(\xi_1,\xi_2)}\sim \abs{\xi_{min}}\abs{\xi_{max}}^{\alpha-1}.
\end{equation}
\end{lemma}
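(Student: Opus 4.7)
The plan is to reduce the asymptotic $\abs{\Omega_\alpha} \sim \abs{\xi_{min}}\abs{\xi_{max}}^{\alpha-1}$ to a one-dimensional convexity estimate on $t \mapsto t^\alpha$. Setting $f(\xi) = \xi\abs{\xi}^{\alpha-1}$, which is odd and satisfies $\abs{f(\xi)} = \abs{\xi}^\alpha$, and using $\xi_3 = -(\xi_1+\xi_2)$, I rewrite
$$\Omega_\alpha(\xi_1,\xi_2) = -\bigl(f(\xi_1) + f(\xi_2) + f(\xi_3)\bigr).$$
The constraint $\xi_1+\xi_2+\xi_3=0$ forbids the three frequencies from sharing a common sign (apart from the trivial case where all vanish), so exactly one has sign opposite to the other two. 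The identity $\abs{\xi_{max}} = \abs{\xi_{med}} + \abs{\xi_{min}}$ that follows from this sign distribution forces the ``lone-sign'' frequency to be precisely the one of maximal modulus. Both $\Omega_\alpha = -\sum f(\xi_i)$ and the claimed right-hand side are symmetric in $(\xi_1,\xi_2,\xi_3)$ and invariant under the global flip $\xi_i \mapsto -\xi_i$, so I may assume $\xi_{med}, \xi_{min} > 0$ and $\xi_{max} < 0$.

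After this reduction, with $a = \xi_{med} \geq b = \xi_{min} > 0$, direct substitution gives
$$\abs{\Omega_\alpha(\xi_1,\xi_2)} = (a+b)^\alpha - a^\alpha - b^\alpha,$$
and the lemma is equivalent to the one-variable estimate
$$(a+b)^\alpha - a^\alpha - b^\alpha \sim b(a+b)^{\alpha-1}, \qquad 0 < b \leq a.$$
For the upper bound I would apply the mean value theorem to $t\mapsto t^\alpha$ on $[a, a+b]$, which gives $(a+b)^\alpha - a^\alpha \leq \alpha(a+b)^{\alpha-1} b$; dropping the nonnegative term $b^\alpha$ only strengthens the inequality. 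For the lower bound the same mean value theorem yields $(a+b)^\alpha - a^\alpha \geq \alpha a^{\alpha-1} b$, and combining with $b^\alpha \leq a^{\alpha-1} b$ (valid since $b \leq a$) gives
$$(a+b)^\alpha - a^\alpha - b^\alpha \geq (\alpha-1) a^{\alpha-1} b \geq (\alpha-1)\, 2^{1-\alpha} (a+b)^{\alpha-1} b,$$
where the last step uses $a+b \leq 2a$. The hypothesis $\alpha > 1$ enters precisely here through the factor $\alpha - 1$; without it the lower constant collapses, consistent with the fact that $\Omega_1 \equiv 0$.

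The only real subtlety is the initial sign-and-ordering bookkeeping: one must verify that the WLOG reduction above exhausts all configurations of $(\xi_1,\xi_2,\xi_3)$ on the plane $\xi_1+\xi_2+\xi_3 = 0$. This is handled by the symmetry and parity remarks above. All remaining steps are straightforward calculus, so there is no serious analytic obstacle; the content of the lemma is genuinely the convexity inequality for $t^\alpha$, which quantifies by exactly how much $(a+b)^\alpha$ exceeds $a^\alpha + b^\alpha$ when $\alpha > 1$.
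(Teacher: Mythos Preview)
Your proof is correct. The paper does not supply its own proof of this lemma: it is quoted verbatim as Lemma~2.1 of \cite{Molinet15} and used as a black box to justify the explicit formula \eqref{glob CP DBeq_sec impl constr symb_th constr comm_proof_explicit formula symb}. So there is no ``paper's proof'' to compare against, and your argument stands as a self-contained verification of the cited result.

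A couple of minor remarks on presentation. First, the sentence ``The identity $\abs{\xi_{max}} = \abs{\xi_{med}} + \abs{\xi_{min}}$ that follows from this sign distribution forces the lone-sign frequency to be precisely the one of maximal modulus'' has the logic slightly reversed: what you actually observe first is that if, say, $\xi_1,\xi_2>0$ and $\xi_3<0$, then $\abs{\xi_3}=\xi_1+\xi_2>\max(\abs{\xi_1},\abs{\xi_2})$, so the lone-sign frequency is automatically the one of largest modulus, and the additive identity is a consequence rather than a cause. Second, you should record explicitly the degenerate case $\xi_{min}=0$, in which both sides vanish; you implicitly exclude it by writing $b>0$, but a one-line remark would make the reduction airtight. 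Neither point affects the validity of the argument, which is exactly the standard convexity/superadditivity computation for $t\mapsto t^\alpha$ that one expects here.
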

Combining \eqref{glob CP DBeq_sec impl constr symb_th constr comm_proof_explicit formula symb} with Lemma \ref{glob CP DBeq_sec impl constr symb_resonnance function lem} one can make direct estimates on $p$ to show it belongs to the desired symbol classes. While this approach definitely works it is limited to this linear in $p$ case, we give another proof following a more robust fixed point scheme that will generalize nicely to the non linear in $p$ case, that is the $[e^{iT_{p}},\partial_x\abs{D}^{\alpha-1}]$ problem.

We start by defining the scale of Banach spaces that define the Fr\'echet space of Paradifferential operators.
\begin{definition}\label{glob CP DBeq_sec impl constr symb_th constr comm_proof_def Banach scale symb}
Given $m \in \r$, $ k \in \n$ and $\w\subset \sr'$ a Banach space. Define $C^k\Gamma^m_\w(\d \times \hat{\d} \setminus \b(0,R))$ as the space of locally bounded functions $a(x,\xi)$ defined on $\d \times \hat{\d} \setminus \b(0,R)$, which are $C^k$ with respect to $\xi$ and such that, for all $j \leq k$ and for all $\xi \geq R$, the function $x \mapsto \partial^\alpha_\xi a(x,\xi)$ belongs to $\w$ and there exists a constant $C_{k}$ such that:
\begin{equation}\label{glob CP DBeq_sec impl constr symb_th constr comm_proof_def Banach scale symb_eq growth xxi cond}
\text{ for } \abs{\xi}\geq R, j\leq k, \norm{\partial^j_\xi a(.,\xi)}_{\w}\leq C_{k} (1+\abs{\xi})^{m-\abs{\alpha}}. 
\end{equation}
The space $C^k\Gamma^m_\w(\d \times \hat{\d} \setminus \b(0,R))$ is equipped with it's natural Banach space topology induced by the best constant $C_{k}$. When $\w=W^{\rho,\infty}$, the best constant is the seminorm $M_\rho^\beta(\cdot;k)$.
\end{definition}
We define:
\[
\psi^{B,b}\bigg(\Gamma^m_\w(\d)\bigg)=\set{\sigma^{B,b}_p,p\in \Gamma^m_\w(\d)},
\]
\[
\psi^{B,b}\bigg(C^k\Gamma^m_\w(\d \times \hat{\d} \setminus \b(0,R))\bigg)=\set{\sigma^{B,b}_p,p\in C^k\Gamma^m_\w(\d \times \hat{\d} \setminus \b(0,R))},
\]
equipped with their natural Fr\'echet and Banach topologies induced by the continuity of the map $p\mapsto\sigma^{B,b}_p$.
Now we reinterpret Theorem \ref{glob CP DBeq_sec impl constr symb_th constr comm} by introducing the linear operator:
\begin{align*}
   L:&C^k\Gamma^{\beta+1-\alpha}_1(\d \times \hat{\d} \setminus \b(0,b)) \rightarrow  \psi^{B,b}\bigg(C^k\Gamma^\beta_0(\d \times \hat{\d} \setminus \b(0,b))\bigg) \\
  &p \mapsto \sigma^{B,b}_{p}\otimes i\xi \abs{\xi}^{\alpha-1}-i\xi \abs{\xi}^{\alpha-1}\otimes\sigma^{B,b}_{p}. 
  \end{align*}
The proof will then proceed in two steps, we first prove that $L$ has a right inverse on the Banach space $C^0\Gamma^{\beta+1-\alpha}_1(\r \setminus \b(0,b))$ and then we prove propagation of regularity in the frequency variable $\xi$ for solutions of the equation \eqref{glob CP DBeq_sec impl constr symb_th constr comm_para eq}.

To construct a right inverse for $L$ the key idea here is simply that a right hand parametrix is given by the standard Cole-Hopf gauge transform:
\begin{align*}
   E_{approx}:&C^k\Gamma^\beta_0(\d \times \hat{\d} \setminus \b(0,b))\rightarrow  \psi^{B,b}\bigg(C^k\Gamma^{\beta+1-\alpha}_0(\d \times \hat{\d} \setminus \b(0,b))\bigg) \\
  &a(x,\xi) \rightarrow \frac{\abs{\xi}^{1-\alpha}}{\alpha}\op(\frac{1}{D})[\sigma^{B,b}_{a}(\cdot,\xi)](x,\xi),
  \end{align*}
where,
 $$\fr_{x}(\op(\frac{1}{D}[\sigma^{B,b}_{a}(\cdot,\xi)])(\eta)=\frac{1}{i\eta}\fr_{x}(\sigma^{B,b}_{a}(x,\xi))(\eta),$$ which is well defined as $P_0(D)\op(\frac{1}{D})[\sigma^{B,b}_{a}(\cdot,\xi)]=0$ where $P_0(D)$ is the Littelwood-Paley projector defined in Section \ref{paracomposition_section Notations and functional analysis}. We then compute:
\[
L\circ  E_{approx}=\sigma^{B,b}_{\cdot}(Id-r), \text{ where: }
\]
\begin{align*}
   r:&C^{\beta+1-\alpha}\Gamma^m_0(\d \times \hat{\d} \setminus \b(0,b)) \rightarrow  \psi^{B,b}\bigg(C^k\Gamma^{\beta+1-\alpha}_0(\d \times \hat{\d} \setminus \b(0,b))\bigg) \\
  &a \mapsto \frac{\alpha (\alpha-1)}{4\pi}\int_0^1\int_{\d\times \hat{\d}}e^{i(x-y)\eta}\sigma^{B,b}_{\xi\abs{\xi}^{\alpha-2}}(\xi+t\eta)\psi^{B,b}(\eta,\xi)\sigma^{B,b}_{\frac{1}{\alpha}\partial_x a\abs{\xi}^{1-\alpha}}(y,\xi)dyd\eta dt.
\end{align*}
Let us remark that the remainder can also be written as:
\[
r(a)(x,\xi)=\frac{\alpha-1}{2}\underbrace{\op_x\bigg(\int^1_0\sigma^{B,b}_{\xi\abs{\xi}^{\alpha-2}}(\xi+t\eta)\psi^{B,b}(\eta,\xi)dt\bigg)}_{(*)}[\sigma^{B,b}_{\partial_x a\abs{\xi}^{1-\alpha}}](x,\xi),
\]
where $(*)$ is seen a Fourier multiplier in the $x$ variable for $\xi$ fixed. Thus estimating the semi-norms of $r(a)$ using the continuity of Fourier multipliers combined with the Bernstein inequalities we get we get by the frequency localisation of Paradifferential operators:
\[
M^{\beta+1-\alpha}_0(r(a);0)\leq \frac{C}{B} M^{\beta+1-\alpha}_0(\sigma^{B,b}_{a};0).
\]
Thus for $B$ sufficiently large $L$ has a right inverse on $\psi^{B,b}\bigg(C^0\Gamma^{\beta+1-\alpha}_1(\r \setminus \b(0,b))\bigg)$ given by the Neumann series:
\[
E=\sum^{+\infty}_{k=0}E_{approx}r^k.
\]
Thus we have constructed a $p\in C^0\Gamma^{\beta+1-\alpha}_1(\r \setminus \b(0,b))$ such that:
\begin{equation*}
\sigma^{B,b}_{p}\otimes i\xi \abs{\xi}^{\alpha-1}-i\xi \abs{\xi}^{\alpha-1}\otimes\sigma^{B,b}_{p} =\sigma^{B,b}_{a}.
\end{equation*}
Now we want to prove that $p\in C^k\Gamma^{\beta+1-\alpha}_1(\r \setminus \b(0,b))$ for all k. We  start by the following computation that comes from commuting with $ix$:
\begin{equation}\label{glob CP DBeq_sec impl constr symb_th constr comm_proof_eq1}
\sigma^{B,b}_{\partial_\xi p}\otimes i\xi \abs{\xi}^{\alpha-1}-i\xi \abs{\xi}^{\alpha-1}\otimes\sigma^{B,b}_{\partial_\xi p} =\sigma^{B,b}_{\partial_\xi a}+\alpha[i \abs{\xi}^{\alpha-1}\otimes\sigma^{B,b}_{p}-\sigma^{B,b}_{p}\otimes  i\abs{\xi}^{\alpha-1}].
\end{equation}
Now to get the desired bound we use the following lemma.
\begin{lemma}\label{glob CP DBeq_sec impl constr symb_th constr comm_proof_lem est comm constr}
let $p\in \sr'(\d\times \hat{\d})$ be a symbol such that for some cut-off parameters $B,b$ we have:
\[
\sigma^{B,b}_{p}\otimes i\xi \abs{\xi}^{\alpha-1}-i\xi \abs{\xi}^{\alpha-1}\otimes\sigma^{B,b}_{ p}=\sigma^{B,b}_{ a},
\]
for some $a\in C^0\Gamma^{\beta}_0(\r \setminus \b(0,b))$ with $\alpha>0$ and $\beta \in \r$. 

Then $\partial_x \sigma^{B,b}_{p}\in C^0\Gamma^{\beta+1-\alpha}_0(\r \setminus \b(0,b))$ and moreover we have the estimate:
\[
M^{\beta+1-\alpha}_0(\partial_x \sigma^{B,b}_{p};0)\leq \frac{M^{\beta}_0(\sigma^{B,b}_{a};0)}{B[1-(1-\frac{1}{B})^\alpha]}.
\]
\end{lemma}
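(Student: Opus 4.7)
The plan is to take the partial Fourier transform of the commutator equation in the $x$-variable, convert it to an algebraic multiplication relation, and then invert the resulting factor using the symmetrized mean value formula for $\xi\mapsto \xi\abs{\xi}^{\alpha-1}$.

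First I would apply $\mathcal{F}_{x\to\zeta}$ to the equation $\sigma^{B,b}_p\otimes i\xi\abs{\xi}^{\alpha-1}-i\xi\abs{\xi}^{\alpha-1}\otimes\sigma^{B,b}_p=\sigma^{B,b}_a$. Because $i\xi\abs{\xi}^{\alpha-1}$ is a pure Fourier multiplier, the symbol product $\otimes$ degenerates: the first term is the pointwise product $i\xi\abs{\xi}^{\alpha-1}\sigma^{B,b}_p(x,\xi)$, while the second is the operator $i(\xi-D_x)\abs{\xi-D_x}^{\alpha-1}$ applied in $x$ to $\sigma^{B,b}_p(\cdot,\xi)$. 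Transforming in $x$ yields on the support of $\psi^{B,b}$ the pointwise algebraic identity
\[i\bigl[\xi\abs{\xi}^{\alpha-1}-(\xi-\zeta)\abs{\xi-\zeta}^{\alpha-1}\bigr]\widehat{\sigma^{B,b}_p}(\zeta,\xi)=\widehat{\sigma^{B,b}_a}(\zeta,\xi).\]

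Next, the fundamental theorem of calculus applied to $t\mapsto (\xi-t\zeta)\abs{\xi-t\zeta}^{\alpha-1}$ gives the factorization
\[\xi\abs{\xi}^{\alpha-1}-(\xi-\zeta)\abs{\xi-\zeta}^{\alpha-1}=\zeta\int_0^1\alpha\abs{\xi-t\zeta}^{\alpha-1}dt=:\zeta\, R(\zeta,\xi),\]
and multiplying by $\zeta/(i\zeta R)$ turns the transformed equation into $\widehat{\partial_x\sigma^{B,b}_p}(\zeta,\xi)=\widehat{\sigma^{B,b}_a}(\zeta,\xi)/R(\zeta,\xi)$. The quantitative heart of the proof is then a lower bound on $R$ on the cutoff support. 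Since $\psi^{B,b}(\zeta,\xi)\neq 0$ forces $\abs{\zeta}\leq(\abs{\xi}-b)/B$, the numbers $\xi$ and $\xi-t\zeta$ for $t\in[0,1]$ all share the same sign, and setting $s=\zeta/\abs{\xi}\in[-1/B,1/B]$ gives $R(\zeta,\xi)=\abs{\xi}^{\alpha-1}\tilde n(s)$ with $\tilde n(s)=[1-(1-s)^\alpha]/s$ on $s>0$ and the analogous expression for $s<0$. For $\alpha\geq 1$, Bernoulli's inequality together with the concavity of $x\mapsto 1-(1-x)^\alpha$ shows that $\tilde n$ is monotone on each of $[-1/B,0)$ and $(0,1/B]$ with minimum attained at $s=1/B$, yielding
\[R(\zeta,\xi)\geq \abs{\xi}^{\alpha-1}B\bigl[1-(1-1/B)^\alpha\bigr].\]

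The last step is to convert this frequency-side bound into the $L^\infty_x$ estimate, and this is where the technical care is needed. Since $\sigma^{B,b}_a(\cdot,\xi)$ is spectrally localized in the cutoff region, I would write $\partial_x\sigma^{B,b}_p(\cdot,\xi)=M_\xi(D_x)\sigma^{B,b}_a(\cdot,\xi)$ where $M_\xi(\zeta)=\chi(\zeta/\abs{\xi})/R(\zeta,\xi)$ and $\chi$ is any fixed smooth cutoff equal to $1$ near $[-1/B,1/B]$; $\chi$ can be chosen freely outside the spectral support of $\sigma^{B,b}_a$, which is the key to matching the constant sharply. After the dilation $\zeta=\abs{\xi}s$, the multiplier $M_\xi(D_x)$ is the rescaling of the fixed symbol $\chi(s)/(\abs{\xi}^{\alpha-1}\tilde n(s))$, whose convolution kernel has dilation-invariant $L^1$-mass controlled by $1/(\abs{\xi}^{\alpha-1}B[1-(1-1/B)^\alpha])$ in view of the above lower bound on $\tilde n$. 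Young's convolution inequality then delivers the pointwise bound
\[\norm{\partial_x\sigma^{B,b}_p(\cdot,\xi)}_{L^\infty_x}\leq \frac{1}{\abs{\xi}^{\alpha-1}B[1-(1-1/B)^\alpha]}\norm{\sigma^{B,b}_a(\cdot,\xi)}_{L^\infty_x},\]
and multiplying by $(1+\abs{\xi})^{\alpha-1-\beta}$ and taking the supremum in $\xi$ gives the stated estimate. The main obstacle is controlling the $L^\infty\to L^\infty$ operator norm of the rescaled multiplier by the sharp constant $1/\min\tilde n$; this hinges crucially on the positivity and smoothness of $\tilde n$ on the cutoff range together with the preexisting spectral localization of $\sigma^{B,b}_a$.
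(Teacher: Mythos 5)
Your proof takes essentially the same route as the paper's: the paper also rewrites the commutator identity, after the mean-value (FTC) factorization of $\xi\abs{\xi}^{\alpha-1}-(\xi-\eta)\abs{\xi-\eta}^{\alpha-1}$, as an elliptic Fourier multiplier equation in $x$ for each fixed $\xi$, and bounds the multiplier from below on the cutoff support by $\abs{\xi}^{\alpha-1}\int_0^1\bigl(1-\tfrac{t}{B}\bigr)^{\alpha-1}dt=\tfrac{B}{\alpha}\bigl[1-(1-\tfrac{1}{B})^\alpha\bigr]\abs{\xi}^{\alpha-1}$, which is exactly your lower bound on $\tilde n$ after the dilation $\zeta=\abs{\xi}s$. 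The only cosmetic difference is that you phrase the identity on the Fourier side and make the $x$-rescaling explicit, whereas the paper keeps the convolution integral and spells out its kernel; this is the same decomposition and the same key estimate. I'll flag one shared caveat worth knowing: the final step, converting the pointwise lower bound on the multiplier into an $L^\infty_x\to L^\infty_x$ operator bound with the sharp constant $1/\min\tilde n$, is not automatic from Young's inequality (the $L^1$ mass of the inverse-multiplier kernel is the $L^\infty$ operator norm but is \emph{not} in general bounded by $1/\min\abs{m}$); you honestly flag this as ``the main obstacle,'' and the paper is equally terse here (``which gives the desired result''), so this is a shared looseness rather than a gap specific to your write-up, and neither presentation damages the downstream use in the Neumann series argument, which only needs $O(1/B)$ smallness.
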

\begin{proof}[Proof of  Lemma \ref{glob CP DBeq_sec impl constr symb_th constr comm_proof_lem est comm constr}]
Without loss of generality, we suppose $p\in\sr$ as the result can be deduced  by a standard density argument. We rewrite the identity verified by $p$ as follows:
\[
\frac{1}{2\pi}\int_0^1\int_{\d\times \hat{\d}}e^{i(x-y)\eta}\sigma^{B,b}_{\abs{\xi}^{\alpha-1}}(\xi+t\eta)\psi^{B,b}(\eta,\xi)\sigma^{B,b}_{\partial_x p}(y,\xi)dyd\eta dt=\frac{1}{\alpha}\sigma^{B,b}_{ a},
\]
thus,
\[
\underbrace{\op_x\bigg(\int^1_0\sigma^{B,b}_{\abs{\xi}^{\alpha-1}}(\xi+t\eta)\psi^{B,b}(\eta,\xi)dt\bigg)}_{(*)}[\sigma^{B,b}_{\partial_x p}](x,\xi)=\frac{1}{\alpha}\sigma^{B,b}_{a},
\]
where $(*)$ is an elliptic Fourier multiplier in the $x$ variable for $\xi$ fixed with the bound:
\[
\frac{B}{\alpha}[1-(1-\frac{1}{B})^\alpha]\abs{\xi}^{\alpha-1}=\int_0^1\bigg(1-\frac{t}{B}\bigg)^{\alpha-1}dt\abs{\xi}^{\alpha-1}\leq \int^1_0\sigma^{B,b}_{\abs{\xi}^{\alpha-1}}(\xi+t\eta)\psi^{B,b}(\eta,\xi)dt
\]
which gives the desired result.
\end{proof}
Getting back to the proof of Theorem \ref{glob CP DBeq_sec impl constr symb_th constr comm} and applying Lemma \ref{glob CP DBeq_sec impl constr symb_th constr comm_proof_lem est comm constr} to \eqref{glob CP DBeq_sec impl constr symb_th constr comm_proof_eq1} we get $p\in C^1\Gamma^{\beta+1-\alpha}_1(\r \setminus \b(0,b))$ and iterating \eqref{glob CP DBeq_sec impl constr symb_th constr comm_proof_eq1} we get $p\in C^k\Gamma^{\beta+1-\alpha}_1(\r \setminus \b(0,b))$ for all $k$.
\end{proof}
A corollary of Lemma \ref{glob CP DBeq_sec impl constr symb_th constr comm_proof_lem est comm constr} is the following unicity result.
\begin{corollary}\label{glob CP DBeq_sec impl constr symb_cor lem est comm constr}
let $p\in \sr'(\d\times \hat{\d})$ be a symbol such that for some cut-off parameters $B,b$ we have:
\[
\sigma^{B,b}_{p}\otimes i\xi \abs{\xi}^{\alpha-1}-i\xi \abs{\xi}^{\alpha-1}\otimes\sigma^{B,b}_{ p}=0.
\]
Then $\sigma^{B,b}_{p}$ is a Fourier multiplier, that is there exists a Fourier multiplier $m$ such that: 
\[
\sigma^{B,b}_{p}(x,\xi)=\sigma^{B,b}_{m}(x,\xi)=\psi^{B,b}(0,\xi)m(\xi).
\]
\end{corollary}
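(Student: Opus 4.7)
The plan is to apply the computation underlying Lemma \ref{glob CP DBeq_sec impl constr symb_th constr comm_proof_lem est comm constr}, which in the present setting becomes particularly transparent. Taking the $x$-Fourier transform of the hypothesis and using that, for any Fourier multiplier $m(\xi)$, one has $\fr_x(\sigma^{B,b}_p \otimes m)(\eta,\xi) = m(\xi)\,\fr_x(\sigma^{B,b}_p)(\eta,\xi)$ and $\fr_x(m \otimes \sigma^{B,b}_p)(\eta,\xi) = m(\xi+\eta)\,\fr_x(\sigma^{B,b}_p)(\eta,\xi)$, I would obtain
\[
\bigl[\,i\xi|\xi|^{\alpha-1} - i(\xi+\eta)|\xi+\eta|^{\alpha-1}\bigr]\,\fr_x(\sigma^{B,b}_p)(\eta,\xi) = 0.
\]
Since $\xi \mapsto \xi|\xi|^{\alpha-1}$ is strictly increasing on $\r$ for $\alpha \geq 1$, the bracket vanishes only at $\eta=0$, which forces $\fr_x(\sigma^{B,b}_p)(\cdot,\xi)$ to be supported in $\{\eta=0\}$ for every $\xi$. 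Consequently $\sigma^{B,b}_p(x,\xi) = f(\xi)$ for some distribution $f$ on $\hat{\d}$ depending only on the frequency variable. (Alternatively, one could invoke Lemma \ref{glob CP DBeq_sec impl constr symb_th constr comm_proof_lem est comm constr} with $a=0$ to conclude $\partial_x \sigma^{B,b}_p = 0$ directly from \eqref{glob CP DBeq_sec impl constr symb_th constr comm_est costr comm}.)

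Next I would identify $f$ with $\psi^{B,b}(0,\xi)\,m(\xi)$ for a suitable Fourier multiplier $m$. The cutoff structure of $\sigma^{B,b}_\cdot$ immediately gives $f(\xi)=0$ on $\{|\xi|<b\}$, where $\psi^{B,b}(\eta,\xi)$ vanishes identically in $\eta$. Setting $m(\xi):=f(\xi)/\psi^{B,b}(0,\xi)$ on $\{\psi^{B,b}(0,\cdot)>0\}$ and extending by zero elsewhere, the formula $\sigma^{B,b}_m(x,\xi) = \psi^{B,b}(0,\xi)\,m(\xi)$---which holds for any symbol depending only on $\xi$, since its $x$-Fourier transform is $2\pi m(\xi)\delta_0(\eta)$---then yields $\sigma^{B,b}_m = f = \sigma^{B,b}_p$, as required.

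The only mild obstacle is ensuring consistency of the above division in the transition annulus $\{b \leq |\xi| \leq b+1\}$, where $\psi^{B,b}(0,\xi)$ may take values strictly between $0$ and $1$. I would resolve this via continuity of $\psi^{B,b}$: at any $\xi_0$ with $\psi^{B,b}(0,\xi_0)=0$, continuity forces $\psi^{B,b}(\cdot,\xi_0)$ to vanish in a neighborhood of $\eta=0$, so $\fr_x(\sigma^{B,b}_p)(\eta,\xi_0)$ also vanishes there; combined with the already-established support condition at $\{\eta=0\}$, this gives $f(\xi_0)=0$ and makes the definition of $m$ unambiguous.
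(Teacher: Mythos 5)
Your main argument is correct and matches the route the paper has in mind (the paper gives no proof and simply says this follows from Lemma \ref{glob CP DBeq_sec impl constr symb_th constr comm_proof_lem est comm constr}, i.e., take $a=0$ there so that \eqref{glob CP DBeq_sec impl constr symb_th constr comm_est costr comm} forces $\partial_x \sigma^{B,b}_p = 0$). Your Fourier-side reformulation with the resonance factor $\xi|\xi|^{\alpha-1}-(\xi\pm\eta)|\xi\pm\eta|^{\alpha-1}$ is a clean, self-contained version of the same idea; the strict monotonicity of $\xi\mapsto\xi|\xi|^{\alpha-1}$ gives vanishing of $\fr_x\sigma^{B,b}_p$ off $\{\eta=0\}$, and since the resonance factor has only a \emph{simple} zero at $\eta=0$, the conclusion is actually stronger than a support condition: $\fr_x\sigma^{B,b}_p(\cdot,\xi)$ must be a multiple of $\delta_0$ with no derivative contributions, which is exactly $x$-independence. (Minor convention remark: depending on which sign convention for $\otimes$ one takes, you get $m(\xi+\eta)$ or $m(\xi-\eta)$ in the second identity; it does not affect the argument.)

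The step that is not right is your resolution of the transition-annulus issue. Continuity of $\psi^{B,b}(\cdot,\xi_0)$ at $\eta=0$ together with $\psi^{B,b}(0,\xi_0)=0$ does \emph{not} force $\psi^{B,b}(\cdot,\xi_0)$ to vanish on a neighborhood of $\eta=0$ --- a smooth function can vanish at a single point. Worse, even granting only $\psi^{B,b}(0,\xi_0)=0$, one \emph{cannot} conclude $f(\xi_0)=0$ from $\psi^{B,b}(\eta,\xi_0)\,\hat p(\eta,\xi_0)=c\,\delta_0(\eta)$: for instance if $\psi^{B,b}(\eta,\xi_0)$ behaves like $\eta^2$ near $0$, then choosing $\hat p(\cdot,\xi_0)\propto\delta_0''$ gives $\eta^2\delta_0'' = 2\delta_0\neq 0$, so a nonzero $c$ is possible even though the cutoff vanishes at $\eta=0$. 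The correct way to close this is not via continuity but via the structure of the admissible cutoff itself: for the constructions actually used, $\psi^{B,b}(0,\xi)>0$ for all $|\xi|\geq b$ (the set $\{\psi^{B,b}(0,\cdot)=0\}$ coincides with $\{|\xi|<b\}$), and on $\{|\xi|<b\}$ the support property $\psi^{B,b}(\eta,\xi)=0$ for $|\xi|<B|\eta|+b$ directly gives $\sigma^{B,b}_p(x,\xi)=0$, hence $f(\xi)=0$. Under that (mild, but genuinely additional) hypothesis your definition $m(\xi)=f(\xi)/\psi^{B,b}(0,\xi)$ on $\{|\xi|\geq b\}$, extended by zero, is unambiguous and yields $\sigma^{B,b}_m=f$ as claimed.
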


The benefit of giving a robust fixed point proof to Theorem \ref{glob CP DBeq_sec impl constr symb_th constr comm} is that it generalizes simply to the following perturbations.
\begin{corollary}\label{glob CP DBeq_sec impl constr symb_th constr comm+conjugation}
Consider two real numbers $\alpha\geq 1$, $\beta\leq \alpha-1$ and two symbol $a\in \Gamma_0^\beta(\d)$ and $p\in \Gamma_1^{\beta+\alpha-1}(\d)$. Then there exist $B>1$ and $\eps>0$ such that for 
\[
M^{\beta+\alpha-1}_0(p)\leq \eps,
\]
there exists a symbol $h\in\Gamma_1^{\beta+1-\alpha}(\d)$ such that, 
\begin{equation}\label{glob CP DBeq_sec impl constr symb_th constr comm+conjugation_para eq}
\sigma^{B,b}_{\crch{\int_0^1\sigma^{B,b}_{e^{i(1-r)p}_\otimes}\otimes \sigma^{B,b}_h\otimes \sigma^{B,b}_{e^{irp}_\otimes}dr,\xi\abs{\xi}^{\alpha-1}}} =\sigma^{B,b}_{a}.
\end{equation}
\end{corollary}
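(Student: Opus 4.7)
The plan is to recognise the equation as a small, $h$-linear perturbation of the one solved in Theorem \ref{glob CP DBeq_sec impl constr symb_th constr comm} and to close it via a Banach fixed point argument on the same scale of symbol classes used there.

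Set
\[
A_p(h) := \int_0^1 e^{i(1-r)T_p}\, T_h\, e^{irT_p}\,dr,
\]
so that at the operator level the equation to solve reads $[A_p(h), T_{\xi|\xi|^{\alpha-1}}] = T_a$, with $A_0(h) = T_h$. Thus at $p=0$ the equation reduces, up to a harmless factor of $i$ absorbed into $a$, exactly to the linear equation inverted in Theorem \ref{glob CP DBeq_sec impl constr symb_th constr comm}. Denote by $E$ the right inverse produced in that theorem, which, for $B$ large enough, sends $\psi^{B,b}(C^0\Gamma^{\beta}_0)$ continuously into $C^0\Gamma^{\beta+1-\alpha}_1$. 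The problem is then equivalent to the fixed point equation
\[
h = \mathcal{T}(h) := E\bigl(\sigma^{B,b}_a\bigr) - E\bigl(\sigma^{B,b}_{[A_p(h)-T_h,\,\xi|\xi|^{\alpha-1}]}\bigr),
\]
and since $h \mapsto A_p(h) - T_h$ is linear, showing that $\mathcal{T}$ is a contraction on $C^0\Gamma^{\beta+1-\alpha}_1$ amounts to the single perturbative bound
\[
M^{\beta+1-\alpha}_0\bigl(E(\sigma^{B,b}_{[A_p(h)-T_h,\,\xi|\xi|^{\alpha-1}]})\bigr) \le C\, M^{\beta+\alpha-1}_0(p)\, M^{\beta+1-\alpha}_0(h).
\]

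That perturbative estimate is the main technical step. Differentiating the integrand of $A_p$ in $r$ and using that $T_p$ commutes with its own flow (Proposition \ref{BCH formula_def para hyperbolic flow}, item $(2)$), I obtain
\[
\partial_r\bigl[e^{i(1-r)T_p}T_h e^{irT_p}\bigr] = i\,e^{i(1-r)T_p}[T_h,T_p]\,e^{irT_p},
\]
so that one integration expresses $A_p(h) - T_h$ as an integral of conjugates of the paradifferential commutator $[T_h,T_p]$ by the flow $e^{i\tau T_p}$. Standard symbolic calculus reveals that $[T_h, T_p]$ is one order below the naive composition and yields the required smallness factor $M^{\beta+\alpha-1}_0(p)$; Proposition \ref{BCH formula_def para hyperbolic flow_prop com-conjg} ensures that the conjugation $e^{i\tau T_p}\cdot e^{-i\tau T_p}$ keeps this operator in the correct symbol class with bounds uniform for $M^0_0(\IM(p)) \le C\eps$; finally the outer commutator with the Fourier multiplier $\xi|\xi|^{\alpha-1}$ preserves the cut-off parameter $B$, as emphasised in the remark following Theorem \ref{glob CP DBeq_sec impl constr symb_th constr comm}, so that applying $E$ brings the result back into $C^0\Gamma^{\beta+1-\alpha}_1$.

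Choosing $\eps$ small enough that the constant above is $\le 1/2$ makes $\mathcal{T}$ a contraction, and its unique fixed point provides $h \in C^0\Gamma^{\beta+1-\alpha}_1$ solving the equation at the base regularity. The upgrade to $h \in \Gamma^{\beta+1-\alpha}_1$ is obtained by the same bootstrap that concludes the proof of Theorem \ref{glob CP DBeq_sec impl constr symb_th constr comm}: commute the fixed point equation with $ix$ and iterate the one-derivative gain of Lemma \ref{glob CP DBeq_sec impl constr symb_th constr comm_proof_lem est comm constr}. The principal obstacle I anticipate is the bookkeeping of cut-offs $\sigma^{B,b}$; under generic composition, $B$ would be degraded to $B\star B$ and the contraction estimate would fail, but this is avoided here precisely because $\xi|\xi|^{\alpha-1}$ is a pure Fourier multiplier, which preserves $B$.
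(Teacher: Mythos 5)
Your approach is essentially the one the paper (very tersely) indicates: linearity in $h$, reduction to Theorem \ref{glob CP DBeq_sec impl constr symb_th constr comm} at $p=0$, and the same Banach-scale contraction/Neumann-series scheme followed by bootstrap in $\xi$. The high-level strategy is sound and the perturbative bound you isolate is indeed the crux.

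One imprecision worth flagging: the derivative identity
\[
\partial_r\bigl[e^{i(1-r)T_p}T_h e^{irT_p}\bigr] = i\,e^{i(1-r)T_p}[T_h,T_p]\,e^{irT_p}
\]
is correct, but integrating it from $0$ to $r$ and then averaging over $r\in[0,1]$ produces
\[
A_p(h) - e^{iT_p}T_h = i\int_0^1 (1-s)\,e^{i(1-s)T_p}[T_h,T_p]\,e^{isT_p}\,ds,
\]
not $A_p(h) - T_h$. You therefore also have to account for the boundary contribution $\bigl(e^{iT_p}-I\bigr)T_h$, which is not an integral of conjugated commutators. This does not break the argument: $e^{iT_p}-I=\int_0^1 iT_p\, e^{isT_p}\,ds$ carries the same factor of $M_0(p)$, and by Proposition \ref{BCH formula_lem est const lie derv_prop Atau symb} the operator $e^{iT_p}$ differs from $T_{e^{ip}_\otimes}$ by a remainder of the right class, so $[\,(e^{iT_p}-I)T_h,\ T_{\xi|\xi|^{\alpha-1}}\,]$ lands in $\Gamma^{\beta}_0$ with norm $\lesssim M_0(p)M_0(h)$, and $E$ applies to it exactly as to the commutator piece. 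Alternatively, one may avoid the detour altogether by interpolating along $t\mapsto e^{it(1-r)T_p}T_h e^{itrT_p}$ from $t=0$ to $t=1$, which directly exhibits $A_p(h)-T_h$ as an integral carrying an explicit factor of $T_p$ without isolating a commutator. Either way the contraction estimate closes; you should just not present the whole perturbation as purely a conjugated commutator. Your observations on cut-off stability (the Fourier multiplier preserves $B$) and the $ix$-bootstrap for propagation of $\xi$-regularity match the paper.
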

\begin{proof}
We notice that the problem is linear in $h$ and that for $p=0$ we are exactly in the case of the previous Theorem. The fixed point scheme proof presented above thus generalizes in a straightforward manner to this case.
\end{proof}

The following statement shows that if the considered symbols depend on time in a tame way then a time dependent version of Theorem \ref{glob CP DBeq_sec impl constr symb_th constr comm} still holds.

\begin{theorem}\label{glob CP DBeq_sec impl constr symb_th constr comm and time deriv}
Consider two real numbers $\alpha\geq 1$, $\beta\in \r$, a symbol $a\in C([0,T],\Gamma_0^\beta(\d))$ and $B$ given by Theorem \ref{glob CP DBeq_sec impl constr symb_th constr comm}. 
Suppose that for $j\in \n$ and $k\in \set{0,1,2}$ we have the following estimate:
\begin{equation}\label{glob CP DBeq_sec impl constr symb_th constr comm and time deriv_hypotehsis growthh time deriv}
M_0^{\beta+j\alpha}(\partial^j_t \partial_\xi^k a;0)\leq C K^j 
\end{equation}
for a constant $K<\alpha$.
Then there exist a constant $B'\geq B$ and symbol $p$ with, for $j\in \n$, \[p\in C\prt{[0,T],C^{2} \Gamma_1^{\beta+1-\alpha}(\d)} \text{ and } \partial^{1+j}_t p \in C\prt{[0,T],C^{2} \Gamma_0^{\beta+j\alpha}(\d)} \] such that, 
\begin{equation}\label{glob CP DBeq_sec impl constr symb_th constr comm and time deriv_para eq}
-\sigma^{B',b}_{\partial_t p}+\sigma^{B',b}_{p}\otimes i\xi \abs{\xi}^{\alpha-1}-i\xi \abs{\xi}^{\alpha-1}\otimes\sigma^{B',b}_{p} =\sigma^{B',b}_{a}.
\end{equation}
\end{theorem}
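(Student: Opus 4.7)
The plan is to view this as a $\partial_t$-perturbation of the commutator equation of Theorem \ref{glob CP DBeq_sec impl constr symb_th constr comm} and to recycle its fixed-point scheme. Let
\[
L(p) \;=\; \sigma^{B',b}_{p}\otimes i\xi \abs{\xi}^{\alpha-1}-i\xi \abs{\xi}^{\alpha-1}\otimes\sigma^{B',b}_{p},
\]
and write $E$ for the right inverse of $L$ provided by that theorem. Since $\xi \abs{\xi}^{\alpha-1}$ does not depend on $t$, both $L$ and $E$ commute with $\partial_t$. Rewriting \eqref{glob CP DBeq_sec impl constr symb_th constr comm and time deriv_para eq} as $L(p) = a + \partial_t p$ gives the fixed-point relation $p = E(a) + E(\partial_t p)$, which formally expands as the Neumann series
\[
p \;=\; \sum_{j \geq 0} E^{j+1}(\partial_t^j a) \;=\; \sum_{j \geq 0} \partial_t^j\, E^{j+1}(a).
\]

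For convergence, estimate \eqref{glob CP DBeq_sec impl constr symb_th constr comm_est costr comm} shows that $E$ contracts seminorms by a factor $1/(B'[1-(1-1/B')^\alpha])$, which tends to $1/\alpha$ as $B' \to \infty$. Combined with the tame hypothesis \eqref{glob CP DBeq_sec impl constr symb_th constr comm and time deriv_hypotehsis growthh time deriv}, the $j$-th term of the series has seminorm bounded by a multiple of $(K/\alpha'_{B'})^j$ with $\alpha'_{B'} := B'[1-(1-1/B')^\alpha]$; since $K < \alpha$ is strict, choosing $B' \geq B$ large enough makes $K/\alpha'_{B'} < 1$ and the partial sums of seminorm constants Cauchy. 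The price is that the $j$-th term naively lives in $\Gamma^{\beta + 1 - \alpha + j}$ rather than in $\Gamma^{\beta + 1 - \alpha}$: each $E$ gains $\alpha - 1$ orders while each $\partial_t$ costs $\alpha$, a net loss of one order per cycle. The resolution, and the main technical point of the proof, is to close the scheme in the limited-regularity class $C^2\Gamma_1^{\beta+1-\alpha}$ from Definition \ref{glob CP DBeq_sec impl constr symb_th constr comm_proof_def Banach scale symb}: only two derivatives in $\xi$ are required, and the strict inequality $K < \alpha$ is precisely what lets the growing-order tail be absorbed, in the spirit of Remark \ref{BCH formula_def para hyperbolic flow_rem on tame seminorm est}. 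Propagation of the two frequency derivatives comes from commuting $\partial_\xi$ with the equation and reapplying Lemma \ref{glob CP DBeq_sec impl constr symb_th constr comm_proof_lem est comm constr}, exactly mirroring the $x$-propagation step of Theorem \ref{glob CP DBeq_sec impl constr symb_th constr comm}.

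The time-derivative statements then fall out of the equation itself. Once $p \in C^2 \Gamma_1^{\beta + 1 - \alpha}$ is in hand, the identity $\partial_t p = L(p) - a$ places $\partial_t p \in C^2 \Gamma_0^\beta$, since $L$ is a commutator with an order-$\alpha$ Fourier multiplier and therefore maps $\Gamma^{\beta + 1 - \alpha}$ continuously into $\Gamma^\beta$. Differentiating this identity $j$ more times in $t$ and inserting the hypothesis $M_0^{\beta + j\alpha}(\partial_t^j a; 0) \leq CK^j$, a one-line induction on $j$ delivers $\partial_t^{1+j} p \in C^2 \Gamma_0^{\beta + j\alpha}$ for every $j \in \mathbb{N}$.
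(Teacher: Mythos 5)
Your setup matches the paper's: defining $E$ as the right inverse from Theorem \ref{glob CP DBeq_sec impl constr symb_th constr comm}, noting that $L$ commutes with $\partial_t$, and expanding $p = \sum_{j\ge 0} E^{j+1}(\partial_t^j a)$ as a Neumann series, with $C^0$ convergence coming from taking $B'$ large so that $K < B'[1-(1-1/B')^\alpha]$. The time-derivative bounds by differentiating the equation $\partial_t p = L(p) - a$ are also fine.

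However, the step where you "propagate the two frequency derivatives by commuting $\partial_\xi$ with the equation and reapplying Lemma \ref{glob CP DBeq_sec impl constr symb_th constr comm_proof_lem est comm constr}, exactly mirroring the $x$-propagation step" is where the argument breaks. The paper explicitly flags that this naive iteration of the $\partial_\xi$-estimate \eqref{glob CP DBeq_sec impl constr symb_th constr comm_est costr comm derv xi} would only close under the much stronger restriction $K < \frac{\alpha-1}{\alpha}$, because each $\xi$-derivative picks up the factor $\alpha\frac{(1+1/B)^{\alpha-1}-1}{1-(1-1/B)^\alpha}M^{\beta+1-\alpha}_0(\partial_x p;0)$ from the second term of that estimate, and compounding it geometrically over the iteration produces a worse threshold than $K < \alpha$. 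The paper's actual fix is different and is the technical heart of the proof: it writes the explicit closed form $p_j = E^{j+1}(\partial_t^j \sigma^{B',b}_a) \sim \frac{|\xi|^{(j+1)(1-\alpha)}}{\alpha^{j+1}}\partial_x^{-(j+1)}\partial_t^j a$ and observes that differentiating the power $|\xi|^{(j+1)(1-\alpha)}$ in $\xi$ produces constants of size only $(j+1)(\alpha-1)$, i.e. polynomial (not geometric) in $j$, which is then absorbed by the geometric margin $K < \alpha$; the remainder from $r$ is controlled by taking $B'$ larger. Your proposal does not perform this bookkeeping, and the phrase "only two derivatives in $\xi$ are required, and the strict inequality $K < \alpha$ is what lets the growing-order tail be absorbed" conflates two separate issues (growth of the symbol order in $j$, versus the size of $\partial_\xi$-derivative constants) without actually supplying the argument that closes either one. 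As written the proposal would at best prove the theorem under the restriction $K < \frac{\alpha-1}{\alpha}$, which is not sufficient for the application in Section 5.
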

\begin{proof}
We define iteratively,
\[
\sigma^{B',b}_{p_0}\otimes i\xi \abs{\xi}^{\alpha-1}-i\xi \abs{\xi}^{\alpha-1}\otimes\sigma^{B',b}_{p_0}=\sigma^{B',b}_{a},
\]
\[
\sigma^{B',b}_{p_{j+1}}\otimes i\xi \abs{\xi}^{\alpha-1}-i\xi \abs{\xi}^{\alpha-1}\otimes\sigma^{B',b}_{p_{j+1}}=\sigma^{B',b}_{\partial_t p_{j}},
\]
then by the estimate \eqref{glob CP DBeq_sec impl constr symb_th constr comm_est costr comm}, for $B'$ sufficiently large $K<B'\crch{1-(1-\frac{1}{B})^\alpha}$ and hypothesis \eqref{glob CP DBeq_sec impl constr symb_th constr comm and time deriv_hypotehsis growthh time deriv} we get
$$p=\sum^{+\infty}_{j=0}p_j \in C^{0}\Gamma_1^{\beta+1-\alpha}(\d) \ \text{ and } \  \partial^{1+j}_t p \in C\prt{[0,T],C^{0} \Gamma_0^{\beta+j\alpha}(\d)}.$$
Now to get
$$p=\sum^{+\infty}_{j=0}p_j \in C^1\Gamma_1^{\beta+1-\alpha}(\d) \ \text{ and } \  \partial^{1+j}_t p \in C\prt{[0,T],C^{1} \Gamma_0^{\beta+j\alpha}(\d)},$$
one is tempted to again apply \eqref{glob CP DBeq_sec impl constr symb_th constr comm_est costr comm derv xi} with hypothesis \eqref{glob CP DBeq_sec impl constr symb_th constr comm and time deriv_hypotehsis growthh time deriv}, which indeed works for $K<\frac{\alpha-1}{\alpha}$. To avoid this, we keep the notation of the proof of Theorem \ref{glob CP DBeq_sec impl constr symb_th constr comm} and write:
\[
p_{j+1}=E(p_j)=E^{j+1}(\partial^j_t \sigma^{B',b}_a)\sim \frac{\abs{\xi}^{(j+1)(1-\alpha)}}{\alpha^{j+1}}\partial_x^{-(j+1)}\partial_t^{j}a,
\]
deriving in $\xi$ we see that in the main term the constants created are of magnitude $(j+1)(\alpha-1)$ which can be compensated for by the geometric growth for $K<\alpha$ and treating the error given by the semi-norms of $r$ in the same manner as before by taking $B'$ sufficiently large. Thus we get $$p=\sum^{+\infty}_{j=0}p_j \in C^1\Gamma_1^{\beta+1-\alpha}(\d) \ \text{ and } \  \partial^{1+j}_t p \in C\prt{[0,T],C^{1} \Gamma_0^{\beta+j\alpha}(\d)},$$  
and by iteration 
$$p=\sum^{+\infty}_{j=0}p_j \in C^2\Gamma_1^{\beta+1-\alpha}(\d) \ \text{ and } \  \partial^{1+j}_t p \in C\prt{[0,T],C^{2} \Gamma_0^{\beta+j\alpha}(\d)}.$$  
\end{proof}
We note that the previous proof was a rudimentary application of a linear Nash-Moser Scheme. We also remark that by taking $B'$ larger and larger we could have taken $$p=\sum^{+\infty}_{j=0}p_j \in C^k\Gamma_1^{\beta+1-\alpha}(\d) \ \text{ and } \  \partial^{1+j}_t p \in C\prt{[0,T],C^{k} \Gamma_0^{\beta+j\alpha}(\d)},$$  
but the method of the proof above does not give a $B'$ uniformly in $k$.

Before we turn to the non linear versions of the previous theorem we give a slight generalisation of the previous theorem which will amount to inverting locally around $0$ the linear problem in Theorem \ref{glob CP DBeq_sec impl constr symb_thm constr gauge trsf with time deriv}.

\begin{corollary}\label{glob CP DBeq_sec impl constr symb_thm constr gauge trsf with time deriv_corollary linearilised pbm}
Consider three real numbers $\eps>0$, $\alpha\geq 1$, $\beta\leq \alpha -1$ and two symbol $a,b\in \Gamma_0^\beta(\d)$ and $p\in \Gamma_1^{\beta+\alpha-1}(\d)$. 
Suppose that for $j\in \n$ and $k\in \set{0,1,2}$ we have the following estimate:
\begin{equation}\label{glob CP DBeq_sec impl constr symb_th constr comm and time deriv_hypotehsis growthh time deriv}
M_0^{\beta+j\alpha}(\partial^j_t \partial_\xi^k a;0)\leq C K^j, \ M_0^{\beta+j\alpha}(\partial^j_t \partial_\xi^k b;0)\leq C \eps K^j \text{ and } M_0^{\beta+j\alpha}(\partial^j_t \partial_\xi^k p;0)\leq C \eps K^j
\end{equation}
for a constant $K<\alpha$.
Then for $\eps$ sufficiently small there exist a constant $B>1$ and symbol $h$ with, for $j\in \n$, \[h\in C\prt{[0,T],C^{2} \Gamma_1^{\beta+1-\alpha}(\d)} \text{ and } \partial^{1+j}_t h \in C\prt{[0,T],C^{2} \Gamma_0^{\beta+j\alpha}(\d)} \] such that, 
\begin{align}\label{glob CP DBeq_sec impl constr symb_th constr comm and time deriv_hypotehsis growthh time deriv_para eq}
&-\sigma^{B,b}_{\int_0^1\sigma^{B,b}_{e^{i(1-r)p}_\otimes}\otimes \sigma^{B,b}_{i\partial_th}\otimes \sigma^{B,b}_{e^{irp}_\otimes}dr}+\sigma^{B,b}_{\crch{\int_0^1\sigma^{B,b}_{e^{i(1-r)p}_\otimes}\otimes \sigma^{B,b}_h\otimes \sigma^{B,b}_{e^{irp}_\otimes}dr,\xi\abs{\xi}^{\alpha-1}}} =\sigma^{B,b}_{a}.\\
&-\sigma^{B,b}_{\int_0^1\sigma^{B,b}_{e^{i(1-r)p}_\otimes}\otimes \sigma^{B,b}_h \otimes \sigma^{B,b}_{e^{irp}_\otimes} \otimes \sigma^{B,b}_{b}}
+\sigma^{B,b}_{\int_0^1\int_0^1 \sigma^{B,b}_{e^{i(1-r)p}_\otimes}\otimes \sigma^{B,b}_{\partial_tp}\otimes \sigma^{B,b}_{e^{i(r-u)p}_\otimes}\otimes \sigma^{B,b}_{h}\otimes \sigma^{B,b}_{e^{iup}_\otimes}drdu}\nonumber \\
&+\sigma^{B,b}_{\int_0^1\int_0^r\crch{\sigma^{B,b}_{e^{i(u-r)p}_\otimes}\otimes \otimes \sigma^{B,b}_{\partial_t p} \sigma^{B,b}_{e^{i(r-u)p}_\otimes},\sigma^{B,b}_{e^{-irp}_\otimes}\otimes \otimes \sigma^{B,b}_h \sigma^{B,b}_{e^{irp}_\otimes}}dr du}\nonumber
\end{align}
\end{corollary}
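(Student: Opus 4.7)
The equation is linear in $h$, and in the case $p=0$, $b=0$ every factor $\sigma^{B,b}_{e^{irp}_\otimes}$ reduces to the identity symbol, the $r$ and $u$ integrals collapse, and the two lines that carry a factor of $b$ or $\partial_t p$ vanish identically. What remains is
$$-\sigma^{B,b}_{i\partial_t h}+\sigma^{B,b}_{[h,\xi\abs{\xi}^{\alpha-1}]}=\sigma^{B,b}_{a},$$
which is exactly the equation solved by Theorem \ref{glob CP DBeq_sec impl constr symb_th constr comm and time deriv}. The plan is to treat the general statement as a perturbation of this limiting case, using that $\eps$ is small.

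Concretely, I would split the left-hand side as $L_0(h)+R(h;p,b)$, where $L_0$ is the linear operator handled by Theorem \ref{glob CP DBeq_sec impl constr symb_th constr comm and time deriv} and $R$ collects every contribution containing at least one factor of $p$, $\partial_t p$ or $b$. Writing each $\sigma^{B,b}_{e^{irp}_\otimes}$ as a genuine paradifferential symbol thanks to Proposition \ref{BCH formula_lem est const lie derv_prop Atau symb} and applying the symbolic calculus of Proposition \ref{BCH formula_def para hyperbolic flow_prop com-conjg}, the assumptions on $p$ and $b$ deliver an estimate of the form
$$M^{\beta+j\alpha}_{0}\bigl(\partial_t^{j}R(h;p,b)\bigr)\le C\,\eps\, K^{j}\, M^{\beta+1-\alpha+j\alpha}_{0}(\partial_t^{j}h),$$
matching the time-growth pattern of \eqref{glob CP DBeq_sec impl constr symb_th constr comm and time deriv_hypotehsis growthh time deriv}. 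Denoting by $E_0$ the right inverse of $L_0$ constructed in Theorem \ref{glob CP DBeq_sec impl constr symb_th constr comm and time deriv}, the problem becomes the fixed point
$$h=E_0\bigl(\sigma^{B,b}_{a}-R(h;p,b)\bigr),$$
which a plain Neumann series solves for $\eps$ small enough. Regularity in $\xi$ up to $C^2$ is then propagated exactly as in Theorem \ref{glob CP DBeq_sec impl constr symb_th constr comm and time deriv} by commuting the equation with $ix$ and applying Lemma \ref{glob CP DBeq_sec impl constr symb_th constr comm_proof_lem est comm constr}, enlarging $B$ if necessary.

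The delicate point to watch is the interaction between the Neumann iteration in $\eps$ and the iterative construction in $\partial_t^{j}$ inherited from Theorem \ref{glob CP DBeq_sec impl constr symb_th constr comm and time deriv}: differentiating the fixed-point equation in $t$ produces extra terms involving $\partial_t^{k}p$, which naively threatens a genuine Nash--Moser loss. However, the combined smallness and geometric growth $\eps K^{j}$ imposed on $p$, $b$ and their time derivatives control exactly the same $(1-(1-1/B)^{\alpha})^{-1}$ factors already handled in the proof of Theorem \ref{glob CP DBeq_sec impl constr symb_th constr comm and time deriv} by enlarging $B$, so that the same linear bootstrap used for Corollary \ref{glob CP DBeq_sec impl constr symb_th constr comm+conjugation} suffices. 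The principal obstacle is thus a careful bookkeeping of the constants rather than any new analytic input, and the statement will follow by combining the Neumann series in $\eps$ with the linear Nash--Moser step of Theorem \ref{glob CP DBeq_sec impl constr symb_th constr comm and time deriv}.
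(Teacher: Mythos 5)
Your proposal is correct and follows essentially the same route as the paper's (very terse) proof: observe that the equation is linear in $h$, that setting $p=0$, $b=0$ recovers exactly Theorem~\ref{glob CP DBeq_sec impl constr symb_th constr comm and time deriv}, and then argue that the fixed-point/Neumann construction together with the linear Nash--Moser bootstrap in $t$ and the $\xi$-regularity propagation are all stable under the $O(\eps K^j)$-small perturbations by $p$, $\partial_t p$ and $b$. Your version simply spells out the perturbative splitting $L_0(h)+R(h;p,b)$ and the associated estimates that the paper leaves implicit.
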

\begin{proof}
We notice that the problem is linear in $h$ and that for $p=0$ and $b=0$ we are exactly in the case of the previous Theorem. The fixed point scheme and linear Nash-Moser scheme of the previous proofs are stable under the hypothesis on $p$ and $b$.
\end{proof}

We now give a non linear version of Theorem \ref{glob CP DBeq_sec impl constr symb_th constr comm} that emphasizes the role of the paradifferential setting in solving those implicit function problems in the non linear case.
\begin{theorem}\label{glob CP DBeq_sec impl constr symb_thm constr gauge trsf}
Consider two real numbers $\alpha\geq 1$, $\beta\leq \alpha-1$ and a symbol $a\in \Gamma_0^\beta(\d)$ Then there exists $\eps>0, B>1$ such that for 
\[
M_0^\beta(a;0)\leq \eps,
\]
there exists a symbol $p\in  \Gamma_1^{\beta+1-\alpha}(\d)$ such that 
\begin{equation}\label{glob CP DBeq_sec impl constr symb_thm constr gauge trsf_eq1}
\sigma^{B,b}_{e^{i p}_\otimes }\otimes i\xi \abs{\xi}^{\alpha-1}-i\xi \abs{\xi}^{\alpha-1}\otimes \sigma^{B,b}_{ e^{i p}_\otimes}=\sigma^{B,b}_{ a}.
\end{equation}
Moreover we have the estimates:
\begin{equation}\label{glob CP DBeq_sec impl constr symb_thm constr gauge trsf_est smnrm}
M_0^{\beta+1-\alpha}(\partial_x \sigma^{B,b}_{p};0) \leq \frac{1+C\eps}{\alpha} M_0^\beta(\sigma^{B,b}_{a};0),
\end{equation}
\begin{align}\label{glob CP DBeq_sec impl constr symb_thm constr gauge trsf_est smnrm drv}
M_0^{\beta-\alpha}(\partial_\xi \partial_x \sigma^{B,b}_{p};0) &\leq (1+CM_0^\beta(\partial_\xi \sigma^{B,b}_{a};0))\times \bigg[ \frac{M^{\beta-1}_0(\partial_\xi\sigma^{B,b}_{a};0)}{B[1-(1-\frac{1}{B})^\alpha]}\\
&+\alpha\frac{(1+\frac{1}{B})^{\alpha-1}-1}{1-(1-\frac{1}{B})^\alpha}M^{\beta+1-\alpha}_0(\partial_x \sigma^{B,b}_{p};0)\bigg].\nonumber
\end{align}
\end{theorem}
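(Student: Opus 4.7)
The plan is to treat this as a nonlinear perturbation of Theorem \ref{glob CP DBeq_sec impl constr symb_th constr comm} and to solve it by a standard Banach contraction argument followed by a bootstrap in the frequency variable. Introduce the nonlinear map
$$F(p) \;=\; \sigma^{B,b}_{e^{ip}_\otimes}\otimes i\xi|\xi|^{\alpha-1} - i\xi|\xi|^{\alpha-1}\otimes \sigma^{B,b}_{e^{ip}_\otimes},$$
and note that $F(0)=0$ since $e^{i\cdot 0}_\otimes = 1$. Using the Gateaux derivative formula \eqref{BCH formula_prop diff Atau param_eq2} at $\tau=1$, one has
$$D_p\, e^{ip}_\otimes(h) \;=\; \int_0^1 e^{i(1-r)p}_\otimes\otimes ih\otimes e^{irp}_\otimes\, dr,$$
so that $DF(p)(h)$ is precisely the linear operator appearing in Corollary \ref{glob CP DBeq_sec impl constr symb_th constr comm+conjugation} (and $DF(0)(h) = -h\otimes \xi|\xi|^{\alpha-1} + \xi|\xi|^{\alpha-1}\otimes h$ is, up to sign, the operator $L$ of Theorem \ref{glob CP DBeq_sec impl constr symb_th constr comm}).

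Next, working in the single Banach space $\psi^{B,b}\bigl(C^0\Gamma^{\beta+1-\alpha}_1(\d\times\hat{\d}\setminus\b(0,b))\bigr)$ from the proof of Theorem \ref{glob CP DBeq_sec impl constr symb_th constr comm}, let $E$ denote the right inverse of $DF(0)$ given by the Neumann series constructed there. Recast \eqref{glob CP DBeq_sec impl constr symb_thm constr gauge trsf_eq1} as the fixed-point equation
$$p \;=\; E\bigl(\sigma^{B,b}_a - (F(p)-DF(0)(p))\bigr)\;=:\;\Phi(p).$$
Writing
$$F(p)-DF(0)(p) \;=\; \int_0^1\!\bigl(DF(tp)-DF(0)\bigr)(p)\,dt$$
and expanding $\sigma^{B,b}_{e^{itp}_\otimes}$ via the identity \eqref{BCH formula_lem est const lie derv_Atau symb ident 2} of Proposition \ref{BCH formula_lem est const lie derv_prop Atau symb}, the symbolic calculus estimates of Proposition \ref{BCH formula_def para hyperbolic flow_prop com-conjg} (applied with $\delta=0$) show that this remainder is quadratically small in $p$ relative to $DF(0)(p)$, in the seminorm $M^{\beta}_0$. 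Therefore, for $M_0^\beta(\sigma^{B,b}_a;0)\leq\varepsilon$ small enough, $\Phi$ contracts on a small ball of radius $O(\varepsilon)$, producing a unique fixed point $p$. The quantitative estimate \eqref{glob CP DBeq_sec impl constr symb_thm constr gauge trsf_est smnrm} then follows from \eqref{glob CP DBeq_sec impl constr symb_th constr comm_est costr comm} applied to $\Phi(p)$, together with the smallness of the remainder.

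Finally, for the bootstrap in $\xi$: commuting \eqref{glob CP DBeq_sec impl constr symb_thm constr gauge trsf_eq1} with $ix$ produces, exactly as in \eqref{glob CP DBeq_sec impl constr symb_th constr comm_proof_eq1}, an identity of the form
$$\sigma^{B,b}_{\partial_\xi e^{ip}_\otimes}\otimes i\xi|\xi|^{\alpha-1} - i\xi|\xi|^{\alpha-1}\otimes \sigma^{B,b}_{\partial_\xi e^{ip}_\otimes} \;=\; \sigma^{B,b}_{\partial_\xi a}+\alpha\bigl[i|\xi|^{\alpha-1}\otimes\sigma^{B,b}_{e^{ip}_\otimes} - \sigma^{B,b}_{e^{ip}_\otimes}\otimes i|\xi|^{\alpha-1}\bigr],$$
in which the right-hand side lies in the scale one order lower. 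Applying Lemma \ref{glob CP DBeq_sec impl constr symb_th constr comm_proof_lem est comm constr} to extract $\partial_x\partial_\xi \sigma^{B,b}_{e^{ip}_\otimes}$ and iterating yields the regularity $p\in\Gamma_1^{\beta+1-\alpha}(\d)$ together with the derivative bound \eqref{glob CP DBeq_sec impl constr symb_thm constr gauge trsf_est smnrm drv}, where the extra factor $1+C\varepsilon$ records the contribution of the nonlinear correction $e^{ip}_\otimes - 1$.

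The main obstacle is conceptual rather than technical: the nonlinearity $p\mapsto e^{ip}_\otimes$ is only tame, not Lipschitz without loss, so a naive application of the implicit function theorem would seem to require a Nash--Moser scheme. The point that unlocks a standard contraction argument is the stability of the exotic paradifferential classes $W^{\rho,\infty}S^m_{1-\delta,\delta}$ under conjugation by $e^{iT_p}$ established in Proposition \ref{BCH formula_def para hyperbolic flow_prop com-conjg}; this ensures that the remainder $F(p)-DF(0)(p)$ is not only small but lives in the same Banach space as $DF(0)(p)$, so that the loss of derivatives is absent and a bootstrap handles the propagation of $\xi$-regularity a posteriori.
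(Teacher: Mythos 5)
Your proof is correct and follows essentially the same route as the paper: both define the map $F$ between the Banach spaces in the scale, note $F(0)=0$ and $DF(0)=L$ where $L$ is the linear operator of Theorem~\ref{glob CP DBeq_sec impl constr symb_th constr comm}, use its right inverse to obtain local surjectivity (you unpack this into the explicit Banach contraction that underlies the inverse/surjection theorem, whereas the paper cites the local inversion theorem directly), and then propagate $\xi$-regularity by the $ix$-commutation identity together with the expansion of $e^{ip}_\otimes$ near the identity. Your closing remark about why Nash--Moser is avoidable here coincides with the paper's key observation.
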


We note that $\beta+1-\alpha\leq 0$ thus the hypothesis on $\IM(p)$ is automatically verified. 
\begin{proof}
The proof should in spirit amount to a Nash-Moser scheme as we are looking to prove an implicit function type of result in Fr\'echet. In our case the problem is simpler due to the following key observation, for $k\geq 0$ the underlining map is well defined on the Banach spaces in the scale defining the Fr\'echet space of paradifferential operators:
\begin{align*}
   F:&C^k\Gamma^{\beta+1-\alpha}_1(\d \times \hat{\d} \setminus \b(0,b)) \rightarrow  \psi^{B,b}\bigg(C^k\Gamma^\beta_0(\d \times \hat{\d} \setminus \b(0,b))\bigg) \\
  &p\mapsto \sigma^{B,b}_{ e^{i p}_\otimes}\otimes i\xi \abs{\xi}^{\alpha-1}-i\xi \abs{\xi}^{\alpha-1}\otimes \sigma^{B,b}_{ e^{i p}_\otimes}
  \end{align*}

The proof will again proceed in two steps, we first prove that $F$ has a right inverse on the Banach space $C^0\Gamma^{\beta+1-\alpha}_1(\r \setminus \b(0,b))$ and then we prove propagation of regularity in the frequency variable $\xi$ for solutions of the equation \eqref{glob CP DBeq_sec impl constr symb_th constr comm_para eq}.

Noticing that $F(0)=0$, the goal is thus to prove the local surjectivity of $F$ around the origin. Now that we reduced the problem to Banach spaces, by the inverse function theorem it suffices to find a right inverse to the differential of $F$ at $0$. Computing the differential at $0$ we get:
\[
D_0F(h)=\sigma^{B,b}_{h}\otimes i\xi \abs{\xi}^{\alpha-1}-i\xi \abs{\xi}^{\alpha-1}\otimes\sigma^{B,b}_{h}=L(h),
\]
Thus by Theorem \ref{glob CP DBeq_sec impl constr symb_th constr comm} and the local inversion theorem in Banach spaces we get the desired local surjectivity.

Now we turn to propagation of regularity in the $\xi$ variable, we fix:
$$p \in C^0\Gamma^{\beta+1-\alpha}_1(\d \times \hat{\d} \setminus \b(0,b)) \text{ and } a \in \Gamma^{\beta}_0(\d \times \hat{\d} \setminus \b(0,b)).$$ 
To make all of the computations rigorous we suppose $p \in \sr$ and the desired result is obtained by a density argument. The computation behind the propagation of regularity is the following analogue of \eqref{glob CP DBeq_sec impl constr symb_th constr comm_proof_eq1}. We start from:
\[
\sigma^{B,b}_{ e^{i p}_\otimes}\otimes i\xi \abs{\xi}^{\alpha-1}-i\xi \abs{\xi}^{\alpha-1}\otimes \sigma^{B,b}_{ e^{i p}_\otimes}=\sigma^{B,b}_{a},
\]
which from the analysis of the linear case gives 
\[
\partial_x \sigma^{B,b}_{\otimes e^{i p}}\in \Gamma^{\beta+1-\alpha}_0(\r \setminus \b(0,b)),
\]
we compute
\[
\partial_x \sigma^{B,b}_{ e^{i p}_\otimes}=\sigma^{B,b}_{\int_0^1  e^{ir p}_\otimes \otimes  \sigma^{B,b}_{\partial_x p}\otimes  e^{-ir p}_\otimes dr}
\in \Gamma^{\beta+1-\alpha}_0(\r \setminus \b(0,b))
\]

Getting back to the definition of $e^{ir p}_\otimes$ as $\beta+1-\alpha\leq 0$:
\[
\sigma^{B,b}_{ e^{ir p}_\otimes}=\sum^{\infty}_{k=0}\frac{i^k r^k}{k!}\sigma^{B,b}_{\otimes^k \sigma^{B,b}_{p}},
\]
thus,
\[
\sigma^{B,b}_{ e^{ir p}_\otimes}=1+O_{M^0_0(\cdot;0)}(\eps),
\]
which gives:
\[
\sigma^{B,b}_{\partial_x p}
\in \Gamma^{\beta+1-\alpha}_0(\r \setminus \b(0,b)).\]
\end{proof}

Finally we give the Theorem that permits the construction of the gauge transform for $\alpha>2$.
\begin{theorem}\label{glob CP DBeq_sec impl constr symb_thm constr gauge trsf with time deriv}
Consider two real numbers $\alpha\geq 1$, $\beta\leq \alpha-1$ and a symbol $a\in C([0,T],\Gamma_0^\beta(\d))$. Then there exists $\eps>0, B>1$ such that for $j\in \n$ and $k\in \set{0,1,2}$,
\begin{equation}\label{glob CP DBeq_sec impl constr symb_thm constr gauge trsf_hypotehsis growthh time deriv}
M_0^{\beta+k\alpha}(\partial^j_t \partial^k_\xi a;0)\leq \eps K^j,
\end{equation}
with $K<\alpha$. Then there exist a symbol p with, for $j\in \n$, \[p\in C\prt{[0,T],C^{2} \Gamma_1^{\beta+1-\alpha}(\d)} \text{ and } \partial^{1+j}_t p \in C\prt{[0,T],C^{2} \Gamma_0^{\beta+j\alpha}(\d)} \] such that, 
\begin{equation}\label{glob CP DBeq_sec impl constr symb_thm constr gauge trsf_eq1}
-\sigma^{B,b}_{ \partial_t\crch{\sigma^{B,b}_{ e^{i p}_\otimes}}}+i\xi \abs{\xi}^{\alpha-1}\otimes \sigma^{B,b}_{ e^{i p}_\otimes}-\sigma^{B,b}_{e^{i p}_\otimes}\otimes i\xi \abs{\xi}^{\alpha-1}=\sigma^{B,b}_{\sigma^{B,b}_{ e^{i p}_\otimes} \otimes a}.
\end{equation}
\end{theorem}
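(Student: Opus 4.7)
The plan is to combine the linearization strategy of Theorem \ref{glob CP DBeq_sec impl constr symb_thm constr gauge trsf} with the tame time-derivative bookkeeping of Theorem \ref{glob CP DBeq_sec impl constr symb_th constr comm and time deriv}. I would recast the equation \eqref{glob CP DBeq_sec impl constr symb_thm constr gauge trsf_eq1} as the zero problem $F(p) = 0$ where
\[
F(p) := -\sigma^{B,b}_{\partial_t[\sigma^{B,b}_{e^{ip}_\otimes}]} + i\xi\abs{\xi}^{\alpha-1}\otimes \sigma^{B,b}_{e^{ip}_\otimes} - \sigma^{B,b}_{e^{ip}_\otimes}\otimes i\xi\abs{\xi}^{\alpha-1} - \sigma^{B,b}_{\sigma^{B,b}_{e^{ip}_\otimes}\otimes a},
\]
viewed as a smooth map on a Banach space $\mathcal{X}$ extracted from the Fr\'echet scale $p \in C^{2}\Gamma^{\beta+1-\alpha}_1$, $\partial_t^{1+j} p \in C^{2}\Gamma^{\beta+j\alpha}_0$ with the tame weighting $K^{-j}$ on the $j$-th time derivative. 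At $p=0$, the symbol $\sigma^{B,b}_{e^{i0}_\otimes}$ is the Fourier multiplier $\psi^{B,b}(0,\xi)$, hence $t$-independent and commuting with $i\xi\abs{\xi}^{\alpha-1}$, so $F(0) = -\sigma^{B,b}_a$ modulo harmless low-frequency errors; the hypothesis on $a$ then gives $\|F(0)\|_{\mathcal{X}} \lesssim \eps$.

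The differential at $p=0$ in direction $h$ reads, after factoring out $i$,
\[
iD_0 F(h) = -\sigma^{B,b}_{\partial_t h} + \sigma^{B,b}_h \otimes i\xi\abs{\xi}^{\alpha-1} - i\xi\abs{\xi}^{\alpha-1}\otimes \sigma^{B,b}_h - \sigma^{B,b}_{h \otimes a},
\]
which is precisely the linear operator right-inverted by Corollary \ref{glob CP DBeq_sec impl constr symb_thm constr gauge trsf with time deriv_corollary linearilised pbm} applied with its parameters ``$p$'' set to $0$ and its perturbation ``$b$'' set to $a$; the construction provided there yields, in the tame scale $\mathcal{X}$, an elliptic right-inverse that loses $\alpha-1$ orders in frequency and one order per time derivative. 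Combined with the smallness $\|F(0)\|_{\mathcal{X}} \lesssim \eps$, the usual inverse function theorem in Banach spaces, equivalently the Picard iteration $p_{n+1} = p_n - [D_0F]^{-1}F(p_n)$, produces the desired zero $p$ of $F$ in $\mathcal{X}$ for $\eps$ sufficiently small, with the quadratic remainder $F(p)-F(0)-D_0F(p)$ absorbed by the smallness of $p$ and $a$.

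Propagation of frequency regularity would be carried out as in the proof of Theorem \ref{glob CP DBeq_sec impl constr symb_thm constr gauge trsf}: commuting the equation with $ix$ produces an equation of the same type for $\partial_\xi p$ modulo a source of the form $[i\abs{\xi}^{\alpha-1}, \sigma^{B,b}_p]_\otimes$, which is controlled by Lemma \ref{glob CP DBeq_sec impl constr symb_th constr comm_proof_lem est comm constr} together with the expansion $\sigma^{B,b}_{e^{irp}_\otimes} = 1 + O_{M^0_0(\cdot;0)}(\eps)$; iterating once more supplies the two levels of $\xi$-regularity claimed. The main obstacle I anticipate is closing the tame bookkeeping for time derivatives in the presence of the nonlinear terms $\partial_t \sigma^{B,b}_{e^{ip}_\otimes}$ and $\sigma^{B,b}_{e^{ip}_\otimes}\otimes a$: each $\partial_t$ produces a factor of $K$ while the elliptic parametrix of Lemma \ref{glob CP DBeq_sec impl constr symb_th constr comm_proof_lem est comm constr} gains only $1/(B[1-(1-1/B)^\alpha])$, so one must choose $B$ large enough that $K < \alpha B[1-(1-1/B)^\alpha]$, which is exactly where the hypothesis $K < \alpha$ is used, as in Theorem \ref{glob CP DBeq_sec impl constr symb_th constr comm and time deriv}. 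Once such a $B$ is fixed, the nonlinear corrections are at most $O(\eps)$ and are absorbed by further shrinking $\eps$.
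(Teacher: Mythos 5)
Your proposal follows essentially the same route as the paper's proof: recast the identity as a zero problem $\Phi(p,a)=0$ for a map that is well defined on a single Banach space in the scale (avoiding Nash--Moser), recognize the linearization as the operator right-inverted by Corollary \ref{glob CP DBeq_sec impl constr symb_thm constr gauge trsf with time deriv_corollary linearilised pbm}, invoke the inverse/implicit function theorem for $\eps$ small, and propagate $\xi$-regularity by commuting with $ix$ as in Theorem \ref{glob CP DBeq_sec impl constr symb_thm constr gauge trsf}. The only cosmetic difference is that you freeze the derivative at $p=0$ and absorb the quadratic remainder via a simplified Newton/Picard iteration, whereas the paper explicitly invokes the IFT with the Corollary supplying a right inverse of $D_p\Phi(p,b)$ at all $(p,b)$ in a neighborhood of $0$ satisfying the tame growth hypothesis; both formulations rest on the same estimates and the same threshold $K<\alpha B[1-(1-1/B)^\alpha]$, so the substance is identical.

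Two small bookkeeping remarks worth tidying: the displayed formula for $iD_0F(h)$ has the commutator with the opposite sign (it should read $i\xi\abs{\xi}^{\alpha-1}\otimes\sigma^{B,b}_h-\sigma^{B,b}_h\otimes i\xi\abs{\xi}^{\alpha-1}$ after factoring out $-i$), and the last term should be $\sigma^{B,b}_{\sigma^{B,b}_h\otimes a}$ rather than $\sigma^{B,b}_{h\otimes a}$; neither affects the argument since only the structure, not the sign, is used.
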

\begin{proof}
Again the para-linear setting and the previous implicit function constructions will reduce the problem to the Banach space case. We start by defining the map:
\[
\Phi(p,a)=-\sigma^{B,b}_{ \partial_t\crch{\sigma^{B,b}_{ e^{i p}_\otimes}}}+i\xi \abs{\xi}^{\alpha-1}\otimes \sigma^{B,b}_{ e^{i p}_\otimes}-\sigma^{B,b}_{e^{i p}_\otimes}\otimes i\xi \abs{\xi}^{\alpha-1}-\sigma^{B,b}_{\sigma^{B,b}_{ e^{i p}_\otimes} \otimes a},
\]
\[
\Phi:C\prt{[0,T],C^{2} \Gamma_1^{\beta+1-\alpha}(\d)}\cap \dot{C}_1\prt{[0,T],C^{2} \Gamma_0^{\beta}(\d)}\rightarrow C\prt{[0,T],C^{2} \Gamma_1^{\beta+1-\alpha}(\d)},
\]
and the goal is to show that close to $\Phi(0,0)=0$ we have a solution to $\Phi(p,a)=0$ with \[p\in C\prt{[0,T],C^{2} \Gamma_1^{\beta+1-\alpha}(\d)} \text{ and } \partial^{1+j}_t p \in C\prt{[0,T],C^{2} \Gamma_0^{\beta+j\alpha}(\d)} \] for $a$ verifying the hypothesis of the theorem for some constants $B$ and $\eps$. We notice that equation \eqref{glob CP DBeq_sec impl constr symb_th constr comm and time deriv_hypotehsis growthh time deriv_para eq} of Corollary \ref{glob CP DBeq_sec impl constr symb_thm constr gauge trsf with time deriv_corollary linearilised pbm} can be reinterpreted as:
\[
D_{p}\Phi(p,b)[h]=a,
\]
and that Corollary \ref{glob CP DBeq_sec impl constr symb_thm constr gauge trsf with time deriv_corollary linearilised pbm} gives the right inverse to this linear map for $(b,p)$ sufficiently close to $0$ verifying the same growth hypothesis given here. Thus by the implicit function theorem in Banach spaces for $\eps>0$ and $B>1$ sufficiently small there exists 
\[
p\in C\prt{[0,T],C^{2} \Gamma_1^{\beta+1-\alpha}(\d)}\cap \dot{C}_1\prt{[0,T],C^{2} \Gamma_0^{\beta}(\d)}, \text{ solving } \Phi(p,a)=0
\]
that is 
\[
-\sigma^{B,b}_{ \partial_t\crch{\sigma^{B,b}_{ e^{i p}_\otimes}}}+i\xi \abs{\xi}^{\alpha-1}\otimes \sigma^{B,b}_{ e^{i p}_\otimes}-\sigma^{B,b}_{e^{i p}_\otimes}\otimes i\xi \abs{\xi}^{\alpha-1}=\sigma^{B,b}_{\sigma^{B,b}_{ e^{i p}_\otimes} \otimes a},
\]
which by iteration give the desired regularity on $p$.
\end{proof}

\section{Sobolev estimate on the weakly dispersive Burgers equation}
The goal of this section is to prove Theorem \ref{glob CP DBeq_intro_Para th CP imprv enrgy est}. First it suffice to make the estimate \eqref{glob CP DBeq_intro_Para th CP imprv enrgy est_energy est} for $u_0 \in C^\infty_0$ and deduce the general result by density. We take $u$ a solution to the Cauchy problem \eqref{glob CP DBeq_intro_Para th CP imprv enrgy est_ eq BDP para avec cutoff}.

By Theorem \ref{glob CP DBeq_sec impl constr symb_th constr comm} there exists $p \in \Gamma^{2-\alpha}_1(\r)$ such that:
\begin{equation}\label{glob CP DBeq_sec Sob est wk DB_eq1}
[T^{B,b}_{ip},T^{B,b}_{i\xi \abs{\xi}^{\alpha-1}}]=-iT^{B,b}_{\sigma^{B,b}_{u\xi}+(\sigma^{B,b}_{u\xi})^*}.
\end{equation}
Now by Corollary \ref{glob CP DBeq_sec impl constr symb_cor lem est comm constr} as $(T^{B,b}_{\xi \abs{\xi}^{\alpha-1}})^*=T^{B,b}_{\xi \abs{\xi}^{\alpha-1}}$ and the left hand side being $L^2$ skew-adjoint we get:
\begin{equation}\label{glob CP DBeq_sec Sob est wk DB_eq2}
\bigg(T^{B,b}_{p}\bigg)^*=T^{B,b}_{p} \text{ in $L^2$}.
\end{equation}

Following \cite{Ifrim17}, before the gauge transform step we start by a normal form one to put the equation in advantageous form that can immediately be handled by the gauge transform. For this we keep the notation of \cite{Molinet18} and recall Theorem $2.2$ on a generalized version of Coifman-Meyer's multipliers.
\begin{proposition}[Theorem $2.2$ of \cite{Molinet18}]\label{glob CP DBeq_sec Sob est wk DB_CM multipliers}
Let $1<p_1,\cdots,p_n<\infty$ and $1\leq p<+\infty$ satisfy $\frac{1}{p}=\frac{1}{p_1}+\cdots+\frac{1}{p_n}$. Assume that $f_1,\cdots,f_n\in \sr(\r)$ are functions with Fourier variables supported in $\{\abs{\xi}\sim N_i\}$ for some dyadic numbers $N_1,\cdots,N_n$.

For $n\geq 1$ and $\chi$ a bounded measurable function on $\r^n$ define the multilinear Fourier multiplier operator $\prod^n_{\chi}$ on $\sr(\r^n)$ by
\begin{equation}\label{glob CP DBeq_sec Sob est wk DB_multilinear multiplier}
\Pi^n_{\chi}(f_1,\cdots,f_n)(x)=\frac{1}{(2\pi)^{n}}\int_{\r^n}\chi(\xi_1,\cdots,\xi_n)\prod^n_{i=1}e^{ix(\xi_1+\cdots\xi_n)}\fr(f_i)(\xi_i)d\xi_1\cdots d\xi_n.
\end{equation}

Assume also that $\chi \in C^{\infty}(\r^n)$ satisfies the Marcinkiewicz type condition
\begin{equation}\label{glob CP DBeq_sec Sob est wk DB_Marcinkiewicz type condition}
\forall{\beta}=(\beta_1,\cdots,\beta_n)\in \n^n, \ \abs{\partial^\beta \chi(\xi)}\lesssim \prod^n_{i=1}\abs{\xi_i}^{\beta_i},
\end{equation}
on the support of $\prod^n_{i=1}\fr(f)(\xi)$. Then,
\begin{equation}\label{glob CP DBeq_sec Sob est wk DB_Lp multiplier estimate}
\forall{\beta}=(\beta_1,\cdots,\beta_n)\in \n^n, \ \norm{\Pi^n_{\chi}(f_1,\cdots,f_n)}_{L^p}\lesssim \prod^n_{i=1}\norm{f_i}_{L^{p_i}},
\end{equation}
with an implicit constant that doesn't depend on $N_1,\cdots,N_n$. 
\end{proposition}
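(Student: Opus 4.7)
The plan is to apply the standard Coifman--Meyer reduction: exploit the frequency localisation $\abs{\xi_i}\sim N_i$ to rescale the symbol to a unit-frequency cube, then expand in a Fourier series on that cube.

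First I would insert smooth cut-offs $\phi_1,\ldots,\phi_n\in C^\infty_c(\r)$ supported in $\set{\abs{\eta}\sim 1}$ and equal to $1$ on a slightly larger annulus, so that on the common support of $\prod_i \fr(f_i)(\xi_i)$ one may replace $\chi(\xi)$ by $\chi(\xi)\prod_i \phi_i(\xi_i/N_i)$. Setting $\eta_i = \xi_i/N_i$ and $\tilde{\chi}(\eta) := \chi(N_1\eta_1,\ldots,N_n\eta_n)\prod_{i} \phi_i(\eta_i)$, the Marcinkiewicz condition \eqref{glob CP DBeq_sec Sob est wk DB_Marcinkiewicz type condition}, via the chain rule and the fact that $\abs{\eta_i}\sim 1$ on $\supp \tilde{\chi}$, transforms into the \emph{uniform} bound $\abs{\partial^\beta \tilde{\chi}(\eta)}\lesssim_\beta 1$ on $\supp \tilde{\chi}\subset [-R,R]^n$, with constants independent of the dyadic parameters $N_1,\ldots,N_n$. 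This is the crucial gain of the rescaling.

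Next I would expand $\tilde{\chi}$ in a Fourier series on $[-R,R]^n$:
\[
\tilde{\chi}(\eta) = \sum_{k\in \z^n} c_k\, e^{i\pi k\cdot\eta/R}, \quad \abs{c_k}\lesssim_M (1+\abs{k})^{-M},
\]
the coefficient decay being a direct consequence of the smoothness bounds above and holding uniformly in the $N_i$. Reverting to the $\xi$ variables, the mode $e^{i\pi k\cdot\eta /R}$ becomes $\prod_i e^{i\pi k_i\xi_i/(RN_i)}$, which under Fourier inversion amounts to translating each $f_i$ by $\pi k_i/(RN_i)$; consequently
\[
\Pi^n_\chi(f_1,\ldots,f_n)(x) = \sum_{k\in \z^n} c_k \prod_{i=1}^n f_i\!\prt{x+\tfrac{\pi k_i}{RN_i}}.
\]
H\"older's inequality with exponents $p_1,\ldots,p_n$ together with translation invariance of $L^{p_i}$ then bound each term by $\prod_i \norm{f_i}_{L^{p_i}}$, and the rapid decay of $c_k$ makes the sum over $k$ converge to a finite constant depending only on the Marcinkiewicz constants of $\chi$, not on the $N_i$. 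The only delicate step is really the first one---verifying that the rescaling produces $N_i$-uniform derivative bounds on $\tilde\chi$---which is exactly what the homogeneous form of \eqref{glob CP DBeq_sec Sob est wk DB_Marcinkiewicz type condition} is designed to guarantee; the rest is elementary Fourier analysis on the torus.
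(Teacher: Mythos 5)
The paper does not supply a proof of this statement: it is quoted as Theorem 2.2 of Molinet--Pilod--Vento \cite{Molinet18}, with the argument deferred entirely to that reference. There is therefore no internal proof to compare against, and what follows is an assessment of your proposal on its own merits.

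Your argument is correct and is the standard ``rescale to unit frequency and expand in a Fourier series'' proof for Coifman--Meyer-type multipliers acting on frequency-localized inputs. One point worth making explicit: as reproduced in the paper, the Marcinkiewicz-type hypothesis reads $\abs{\partial^\beta\chi(\xi)}\lesssim\prod_i\abs{\xi_i}^{\beta_i}$ with \emph{positive} exponents; this is a misprint for the decaying exponents $\abs{\xi_i}^{-\beta_i}$ as in \cite{Molinet18}. Your proof tacitly uses the correct version: the chain-rule cancellation between the factor $N^\gamma$ coming from $\partial_\eta^\gamma\bigl[\chi(N_1\eta_1,\dots,N_n\eta_n)\bigr]$ and the factor $\prod_i (N_i\abs{\eta_i})^{-\gamma_i}$ coming from the hypothesis is precisely what makes $\partial^\beta\tilde\chi$ bounded uniformly in the $N_i$; with the growing exponents the rescaled derivative bounds would degenerate in $N$ and the asserted $N$-uniformity of the constant would be lost. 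Apart from implicitly correcting this sign, the remaining steps --- inserting cut-offs $\phi_i$ (harmless on the support of $\prod_i\fr(f_i)$), the resulting $N$-uniform $C^M$ bounds on $\tilde\chi$ supported in a fixed cube, the rapidly decaying Fourier coefficients $c_k$ from integration by parts, the rewriting of $\Pi^n_\chi(f_1,\dots,f_n)$ as $\sum_k c_k\prod_i f_i(\cdot+\pi k_i/(RN_i))$, and the conclusion via H\"older together with translation invariance of $L^{p_i}$ --- are all sound, and the constant obtained depends only on finitely many Marcinkiewicz seminorms of $\chi$ and not on $N_1,\dots,N_n$, as claimed.
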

We now get back to the equation:
\[
\partial_t u +T^{B,b}_{u}\partial_x u+\partial_x\abs{D}^{\alpha-1}u=0,
\]
and write for $v=\D^s u$,
\begin{multline*}
\partial_t v+i\frac{T^{B,b}_{ \D^{-s}v\xi}+\left(T^{B,b}_{ \D^{-s}v\xi}\right)^*}{2} v+\partial_x \abs{D}^{\alpha-1}v\\=[T^{B,b}_{\D^{-s}v}\partial_x,\D^s]\D^{-s}v-i\frac{T^{B,b}_{ \D^{-s}v\xi}-\left(T^{B,b}_{ \D^{-s}v\xi}\right)^*}{2}v.
\end{multline*}
We are looking for $w=v+B(v,v)$ to eliminate the worst terms in the write hand side, the equation on $w$ reads
\begin{align*}
    &\partial_t w+i\frac{T^{B,b}_{ \D^{-s}v\xi}+\left(T^{B,b}_{ \D^{-s}v\xi}\right)^*}{2} v+\partial_x \abs{D}^{\alpha-1}w\\
    &=[T^{B,b}_{\D^{-s}v}\partial_x,\D^s]\D^{-s}v-i\frac{T^{B,b}_{ \D^{-s}v\xi}-\left(T^{B,b}_{ \D^{-s}v\xi}\right)^*}{2}v\\
    &+D^{\alpha-1} B(\partial_xu,u)+D^{\alpha-1} B(u,\partial_x u)-B(D^{\alpha-1}\partial_x u,u)-B(u,D^{\alpha-1}\partial_x u)\\
    &-B(\D^sT_u\partial_x u, v)-B(v,\D^sT_u\partial_x u).
\end{align*}
Thus we are looking for be $B$ such that
\begin{multline*}
D^{\alpha-1} B(\partial_xu,u)+D^{\alpha-1} B(u,\partial_x u)-B(D^{\alpha-1}\partial_x u,u)-B(u,D^{\alpha-1}\partial_x u)\\
=[T^{B,b}_{\D^{-s}v}\partial_x,\D^s]\D^{-s}v-i\frac{T^{B,b}_{ \D^{-s}v\xi}-\left(T^{B,b}_{ \D^{-s}v\xi}\right)^*}{2}v.
\end{multline*}
Using the multiplier notation we get, $B=\Pi_{\chi}$ with
\begin{align*}
\chi(\xi_1,\xi_2)&=\frac{\psi^{B,b}(\xi_1,\xi_2)\langle \xi_2 \rangle^{-s}\langle \xi_1 \rangle^{-s}\xi_2\prt{\langle \xi_2 \rangle^{s}-\langle \xi_2+\xi_1 \rangle^{s}}}{\Omega_{\alpha}(\xi_1,\xi_2)}\\
&\times\frac{\psi^{B,b}(\xi_1,\xi_2-\xi_1)(\xi_2-\xi_1)-\psi^{B,b}(\xi_1,\xi_2)\xi_2}{2\Omega_{\alpha}(\xi_1,\xi_2)},
\end{align*}
which we rewrite as
\[
w=v+\Pi_{\chi_1}(u,D^{1-\alpha}v).
\]
where $\chi_1$ verifies the hypothesis of proposition \ref{glob CP DBeq_sec Sob est wk DB_CM multipliers}.

Injected back in the equation on $w$ we get 
\begin{align*}
    &\partial_t w+i\frac{T^{B,b}_{ \D^{-s}v\xi}+\left(T^{B,b}_{ \D^{-s}v\xi}\right)^*}{2} w+\partial_x \abs{D}^{\alpha-1}w\\
    &=-i\frac{T^{B,b}_{ \D^{-s}v\xi}+\left(T^{B,b}_{ \D^{-s}v\xi}\right)^*}{2}B(v,v)\\
    &-B(\D^sT_u\partial_x u, v)-B(v,\D^sT_u\partial_x u).
\end{align*}
By the explicit formula on $B$ on can re-arrange the terms on the right hand side to get:
\[
\partial_t w+[iT^{B,b}_p,\partial_x \abs{D}^{\alpha-1}]w+\partial_x \abs{D}^{\alpha-1}w=\Pi_{\chi_{2}}(T_u\partial_x u,D^{1-\alpha}v),
\]
where $\chi_2$ still verifies the hypothesis of proposition \ref{glob CP DBeq_sec Sob est wk DB_CM multipliers} and we used the definition of $p$. 
Conjugating with $e^{iT^{B,b}_{p}}$ we get
\begin{multline*}
\partial_t w+e^{-iT^{B,b}_{p}}\partial_x \abs{D}^{\alpha-1}e^{iT^{B,b}_{p}}w-\frac{1}{2}\int_0^1e^{-irT^{B,b}_{p}}\crch{T^{B,b}_{p},T^{B,b}_{\sigma^{B,b}_{u\xi}+(\sigma^{B,b}_{u\xi})^*}}e^{irT^{B,b}_{p}}\\=e^{iT^{B,b}_{p}}\Pi_{\chi_{2}}(T_u\partial_x u,D^{1-\alpha}v),
\end{multline*}
The main observation here is that due to that the implicit construction of symbols we were able to keep the following cancellation, even though $$\crch{T^{B,b}_{p},T^{B,b}_{\sigma^{B,b}_{u\xi}+(\sigma^{B,b}_{u\xi})^*}} \in \Gamma_0^{3-\alpha}$$ we have that 
\begin{multline*}
\prt{\int_0^1e^{-irT^{B,b}_{p}}\crch{T^{B,b}_{p},T^{B,b}_{\sigma^{B,b}_{u\xi}+(\sigma^{B,b}_{u\xi})^*}}e^{irT^{B,b}_{p}}}^*\\=-\int_0^1e^{-irT^{B,b}_{p}}\crch{T^{B,b}_{p},T^{B,b}_{\sigma^{B,b}_{u\xi}+(\sigma^{B,b}_{u\xi})^*}}e^{irT^{B,b}_{p}}.
\end{multline*}
Making an energy estimate in $w$ we get by integration by parts
\[
\frac{d}{dt}\norm{w}_{L^2}\leq C\norm{\partial_x D^{1-\alpha}(u^2)}_{L^\infty_x}\norm{v}_{L^2}
\]
From the normal form transform
\[
w=v+\Pi_{\chi_1}(u,D^{1-\alpha}v).
\]
we see that for $\norm{u}_{H^{\prt{\frac{3}{2}-\alpha}^+}}$ sufficiently small we have that 
\[
C^{-1}\norm{v}_{L^2} \leq \norm{w}_{L^2}\leq C\norm{v}_{L^2},
\]
which injected back in the energy estimate gives the desired bound.
\section{Complete gauge transform for the dispersive Burgers equation}
In contrast to the previous section in the first step to conjugating \eqref{glob CP DBeq_intro_Para th conjg_eq1} we make the choice, by Theorem \ref{glob CP DBeq_sec impl constr symb_thm constr gauge trsf with time deriv}, of a symbol p with, for $j\in \n$, \[p\in C\prt{[0,T],C^{2} \Gamma_1^{0}(\d)} \text{ and } \partial^{1+j}_t p \in C\prt{[0,T],C^{2} \Gamma_0^{(j+1)\alpha-1}(\d)} \] such that:
\begin{equation}\label{glob CP DBeq_sec cmpl gg trsm_eq6}
-\partial_t\sigma^{B,b}_{e^{i p}_\otimes}+\sigma^{B,b}_{e^{i p}_\otimes}\otimes i\xi \abs{\xi}^{\alpha-1}-i\xi \abs{\xi}^{\alpha-1}\otimes \sigma^{B,b}_{ e^{i p}_\otimes}=-\sigma^{B,b}_{\sigma^{B,b}_{ e^{i p}_\otimes}\otimes iu\xi}.
\end{equation}
Indeed in order have this we need to check the tameness of the dependence of $\sigma^{B,b}_{iu\xi}$ in time, which from the equation on $u$ and the frequency cut-off bounding time derivatives with $\alpha$ space derivatives we get for $j\in \n$
\[
M_0^{(j+1)\alpha-1}(\partial^{j}_t (\sigma^{B,b}_{iu\xi});0)\leq C \norm{u}_{C_*^{(2-\alpha)^+}}(2^j+1)
\]
which insures the existence of $p$ for $\alpha>2$ and $\norm{u}_{C_*^{(2-\alpha)^+}}$ sufficiently small by Theorem \ref{glob CP DBeq_sec impl constr symb_thm constr gauge trsf with time deriv}.

For the proof of well-posedness we also need to see that for 2 different solutions $u,v$ we have by the proof of Theorem \ref{glob CP DBeq_sec impl constr symb_thm constr gauge trsf with time deriv}:
\[ M_0^{0}([p(u)-p(v)](t,\cdot);1)\leq e^{C\norm{(u,v)}_{C^{(2-\alpha)^+}_*}}\norm{\op(\frac{1}{D})P_{\geq b}[u-v](t,\cdot)}_{C^{(2-\alpha)^+}_*}.\]

Now looking at the equation on $u$:
\[
\partial_t u+T_u^{B,b}\partial_x u+\partial_x\abs{D}^{\alpha-1}u=0,
\]
put $w=T^{B,b}_{e^{iT^{B,b}_p}}u$ then by construction of $p$ we get:
\[
\partial_t w +\partial_x\abs{D}^{\alpha-1}w=\prt{T^{B,b}_{\sigma^{B,b}_{ e^{i p}_\otimes}\otimes iu\xi}-T^{B,b}_{ e^{i p}_\otimes}T_u^{B,b}\partial_x}u=T^{\frac{B^2}{2B+1}}_{R(u)}w.
\]
As $p$ is $C^2$ in $\xi$ the residual term $T^{\frac{B^2}{2B+1}}_{R(u)}$ is a an operator of order $0$ in $H^s$ for all $s$ for $u\in C_*^{(2-\alpha)^+}$ by \eqref{paracomposition_Notions of microlocal analysis_Paradifferential Calculus_difference between 2 choices of cutoff} and Theorem \ref{Cont of lim reg exotic sym_thm for para symb}. Thus an energy estimate on $w$ gives 
\[
\norm{w(t)}_{H^s}\leq e^{C\norm{u}_{L^\infty_tC_*^{(2-\alpha)^+}}}\norm{w_0}.
\]
To get back to $u$ by the ellipticity of $T^{B,b}_{e^{iT^{B,b}_p}}$ we see that:
\[
\norm{u(t)}_{H^s}\leq e^{C\norm{u}_{L^\infty_tC_*^{(2-\alpha)^+}}}\prt{\norm{w(t)}_{H^s}+\norm{P_0(D)u}_{L^2}}.
\]
To treat the low frequency component we notice that for the para-linear equation by projecting on $P_0(D)$ we get:
\[
\partial_t P_0(D)u+\partial_x\abs{D}^{\alpha-1}P_0(D)u=0,
\]
thus, injecting back into the energy estimate, we get the apriori estimate
\[
\norm{u(t)}_{H^s}\leq e^{C\norm{u}_{L^\infty_tC_*^{(2-\alpha)^+}}}\norm{u_0}_{H^s}.
\]
Now for the Lipschitz estimate, taking another solution $v$ and defining analogously $p'$ and $w'$ we get 
\[
\partial_t w' +\partial_x\abs{D}^{\alpha-1}w'=\prt{T^{B,b}_{\sigma^{B,b}_{ e^{i p'}_\otimes}\otimes iv\xi}-T^{B,b}_{ e^{i p'}_\otimes}T_v^{B,b}\partial_x}v=T^{\frac{B^2}{2B+1}}_{R(v)}w',
\]
thus,
\[
\partial_t (w'-w) +\partial_x\abs{D}^{\alpha-1}(w'-w)=T^{\frac{B^2}{2B+1}}_{R(v)-R(u)}w'+T^{\frac{B^2}{2B+1}}_{R(u)}(w'-w),
\]
Making an energy estimate we get
\[
\norm{w'-w}_{H^s}\leq e^{C\norm{u}_{L^\infty_tC_*^{(2-\alpha)^+}}}\crch{\norm{w'_0-w_0}_{H_s}+\norm{u-v}_{C_*^{(2-\alpha)^+}}\norm{w'}_{H^s}}.
\]
Now writing 
\begin{align*}
v-u&=T^{B,b}_{e^{-iT^{B,b}_{p'}}}w'-T^{B,b}_{e^{-iT^{B,b}_p}}w +\prt{Id-T^{B,b}_{e^{-iT^{B,b}_{p'}}}T^{B,b}_{e^{iT^{B,b}_{p'}}}}v\\
&-\prt{Id-T^{B,b}_{e^{-iT^{B,b}_p}}T^{B,b}_{e^{iT^{B,b}_p}}}u\\
&=T^{B,b}_{e^{-iT^{B,b}_{p'}}}(w'-w)+\crch{T^{B,b}_{e^{-iT^{B,b}_{p'}}}-T^{B,b}_{e^{-iT^{B,b}_p}}}w\\
&+\prt{Id-T^{B,b}_{e^{-iT^{B,b}_{p'}}}T^{B,b}_{e^{iT^{B,b}_{p'}}}}(v-u)-\prt{T^{B,b}_{e^{-iT^{B,b}_{p'}}}T^{B,b}_{e^{iT^{B,b}_{p'}}}-T^{B,b}_{e^{-iT^{B,b}_p}}T^{B,b}_{e^{iT^{B,b}_p}}}u.
\end{align*}
Combining the previous identity with the equations on the low frequencies and the energy estimate on $w-w'$ we get
\[
\norm{v-u}_{H^s}\leq e^{C\norm{(u,v}_{L^\infty_tC_*^{(2-\alpha)^+}}}\norm{v-u}_{H_s}.
\]
The Lipschitz bound combined with the $H^s$ apriori bound ensure the well posendess of the Cauchy problem for $s>\frac{1}{2}+2-\alpha$.
\appendix

\section{Paradifferential Calculus}\label{paracomposition_Notions of microlocal analysis_Paradifferential Calculus}
In this paragraph we review classic notations and results about paradifferential and pseudodifferential calculus that we need in this paper.
We follow the presentations in \cite{Hormander71}, \cite{Hormander97}, \cite{Taylor07}, and \cite{Metivier08} which give an accessible and complete presentation. 
\begin{notation}
 In the following presentation we will use the usual definitions and standard notations for the regular functions $C^k$, $C^k_b$ for bounded ones and $C^k_0$ for those with compact support,
  the distribution space $\dr'$,$\er'$ for those with compact support, 
  $\dr'^k$,$\er'^k$ for distributions of order k, Lebesgue spaces ($L^p$), Sobolev spaces ($H^s,W^{p,q}$) 
  and the Schwartz class $\sr$ and it's dual $\sr'$. All of those spaces are equipped with their standard topologies. We also use the \textit{Landau notation}  $O_{\norm{ \ }}(X)$.
\end{notation}

For the definition of the periodic symbol classes we will need the following definitions and notations.
\begin{notation}
We will use $\d$ to denote $\t$ or $\r$ and $\hat{\d}$ to denote their duals that is $\z$ in the case of $\t$ and $\r$ in the case of $\r$. For concision an integral on $\z$ that is $\displaystyle \int_\z$ should be understood as $\displaystyle \sum_\z$. A function $a$ is said to be in $ C^\infty(\t \times \z)$ if for every $\xi \in \z$, $a(\cdot,\xi) \in C^\infty(\t)$. For $\xi\in \z$, $\partial_\xi$ should be understood as the forward difference operator, that is
\[\partial_\xi a(\xi)=a(\xi+1)-a(\xi),\ \xi \in \z.\]
We recall the following simple identities for the Fourier transform on the torus:
\[
\begin{cases}
\fr_{\t}(\partial_x^\alpha f)(\xi)=\xi^\alpha \fr_{\t}(f)(\xi), \xi \in \z,\\
\fr_{\t}((e^{-2i\pi x}-1)^\alpha f)(\xi)=\xi^\alpha \fr_{\t}(f)(\xi), \xi \in \z.
\end{cases}
\]
 \end{notation}

\subsection{Littlewood-Paley Theory}\label{paracomposition_section Notations and functional analysis}
\begin{definition}[Littlewood-Paley decomposition]\label{paracomposition_section Notations and functional analysis_def LP Theory}
Pick $P_0\in C^\infty_0(\r)$ so that: $$P_0(\xi)=1 \text{ for } \abs{\xi}<1 \text{ and } 0 \text{ for }  \abs{\xi}>2 .$$ 
We define a dyadic decomposition of unity by:
\[ \text{for } k \geq 1, \ P_{\leq k}(\xi)=P_0(2^{-k}\xi), \ P_k(\xi)=P_{\leq k}(\xi)-P_{\leq k-1}(\xi). \]
 Thus,\[ P_{\leq k}(\xi)=\sum_{0\leq j \leq k}P_j(\xi) \text{ and } 1=\sum_{j=0}^\infty P_j(\xi). \]
 Introduce the operator acting on $\mathscr S '(\r)$: 
 \[P_{\leq k}u=\fr^{-1}(P_{\leq k}(\xi)u) \text{ and } u_k=\fr^{-1}(P_k(\xi)u).\]
 Thus,
 \[u=\sum_k u_k.\]
 Finally put $\set{k\geq 1, C_k=\supp \ P_k}$ the set of rings associated to this decomposition.
\end{definition}

\begin{remark}
An interesting property of the Littlewood-Paley decomposition is that even if the decomposed function is merely a distribution the terms of the decomposition are regular, indeed they all have compact spectrum and thus are entire functions. On classical functions spaces this regularisation effect can be ``measured" by the following Bernstein inequalities.
\end{remark}

\begin{proposition}[Bernstein's inequalities]\label{paracomposition_Notations and functional analysis_bernstein1}
Suppose that $a\in L^p(\d)$ has its spectrum contained in the ball $\set{\abs{\xi}\leq \lambda}$. 

Then $a\in C^\infty$ and for all $\alpha \in  \n$ and $1\leq p \leq q \leq +\infty$, there is $C_{\alpha,p,q}$ (independent of $\lambda$) such that, 
\[\norm{\partial^{\alpha}_x a}_{L^q} \leq C_{\alpha,p,q} \lambda^{\abs{\alpha}+\frac{1}{p}-\frac{1}{q}}\norm{a}_{L^p}.\]
In particular,
\[\norm{\partial^{\alpha}_x a}_{L^q} \leq C_{\alpha} \lambda^{\abs{\alpha}}\norm{a}_{L^p}, \text{ and for $p=2$, $p=\infty$}\]
\[\norm{a}_{L^\infty}\leq C \lambda^{\frac{1}{2}} \norm{a}_{L^2}.\]
If moreover a has it's spectrum is in $ \set{0<\mu \leq \abs{\xi}\leq \lambda}$ then:
\[
 C_{\alpha,q}^{-1} \mu^{\abs{\alpha}}\norm{a}_{L^q}\leq \norm{\partial^{\alpha}_x a}_{L^q} \leq C_{\alpha,q} \lambda^{\abs{\alpha}}\norm{a}_{L^q}.
\]
\end{proposition}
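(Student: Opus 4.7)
The plan is to reduce every inequality in the statement to Young's convolution inequality via the spectral support hypothesis. Fix once and for all $\chi \in C_0^\infty(\r)$ with $\chi \equiv 1$ on $\set{\abs{\xi} \leq 1}$ and $\chi \equiv 0$ on $\set{\abs{\xi} \geq 2}$, and set $\chi_\lambda(\xi) = \chi(\xi/\lambda)$. Since $\supp \hat a \subset \set{\abs{\xi}\leq \lambda}$, multiplication by $\chi_\lambda$ is the identity on $\hat a$, so $a = \check\chi_\lambda * a$ with $\check\chi_\lambda(x) = \lambda \check\chi(\lambda x)$. That $a \in C^\infty$ is immediate: $\check\chi_\lambda \in \sr(\r)$ on $\r$, and on $\t$ the spectrally truncated $a$ is a trigonometric polynomial. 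The periodic case is handled identically after replacing $\check\chi_\lambda$ by the periodization $K_\lambda(x) = \sum_{k \in \z} \chi(k/\lambda) e^{2i\pi k x}$, whose $L^r(\t)$ norms obey the same scaling by Poisson summation together with the Schwartz decay of $\check\chi$.

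Differentiating gives $\partial_x^\alpha a = (\partial_x^\alpha \check\chi_\lambda) * a$, and Young's inequality with $1 + \tfrac{1}{q} = \tfrac{1}{r} + \tfrac{1}{p}$ yields
\[
\norm{\partial_x^\alpha a}_{L^q} \leq \norm{\partial_x^\alpha \check\chi_\lambda}_{L^r}\norm{a}_{L^p}.
\]
The change of variable $y = \lambda x$ gives the scaling identity $\norm{\partial_x^\alpha \check\chi_\lambda}_{L^r} = \lambda^{\abs{\alpha}+1-1/r}\norm{\partial_x^\alpha \check\chi}_{L^r}$, and the exponent simplifies via $1-1/r = 1/p-1/q$ to exactly $\abs{\alpha} + 1/p - 1/q$, proving the main estimate. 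The endpoint $\norm{a}_{L^\infty} \leq C\lambda^{1/2}\norm{a}_{L^2}$ is the special case $\alpha = 0$, $p=2$, $q=\infty$; equivalently it follows from Cauchy--Schwarz and Plancherel via $\norm{a}_{L^\infty} \leq \norm{\hat a}_{L^1} \leq \norm{\chi_\lambda}_{L^2}\norm{\hat a}_{L^2}$.

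For the reverse annulus inequality $\norm{a}_{L^q} \leq C\mu^{-\abs{\alpha}}\norm{\partial_x^\alpha a}_{L^q}$, I would invert $\partial_x^\alpha$ by a smooth multiplier. Pick $\phi \in C_0^\infty(\r)$ supported in $\set{1/2 \leq \abs{\eta} \leq 2\Lambda}$ and equal to $(i\eta)^{-\abs{\alpha}}$ on $\set{1 \leq \abs{\eta} \leq \Lambda}$, where $\Lambda = \lambda/\mu$, and set $\tilde\psi(\xi) = \mu^{-\abs{\alpha}}\phi(\xi/\mu)$. Then $\tilde\psi(\xi)(i\xi)^{\abs{\alpha}} = 1$ on $\set{\mu \leq \abs{\xi}\leq \lambda} \supset \supp \hat a$, hence $a = \check{\tilde\psi} * \partial_x^\alpha a$, and Young with $r=1$ gives
\[
\norm{a}_{L^q} \leq \norm{\check{\tilde\psi}}_{L^1}\norm{\partial_x^\alpha a}_{L^q} = \mu^{-\abs{\alpha}}\norm{\check\phi}_{L^1}\norm{\partial_x^\alpha a}_{L^q},
\]
which is the desired lower bound (the upper bound $\norm{\partial_x^\alpha a}_{L^q} \leq C\lambda^{\abs{\alpha}}\norm{a}_{L^q}$ is just the case $p=q$ of the main estimate).

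The only real subtlety, and the point at which I would be careful, is that $\norm{\check\phi}_{L^1}$ depends on $\Lambda = \lambda/\mu$, so the claimed uniformity of the constant $C_{\alpha,q}$ in $(\mu,\lambda)$ implicitly requires $\Lambda$ to remain bounded. This is precisely the context in which the proposition is applied throughout the paper---a single Littlewood--Paley ring $C_k$ has $\lambda/\mu \leq 4$---so fixing $\phi$ once and for all adapted to that dyadic ratio produces a constant depending only on $\alpha$ and $q$, as stated.
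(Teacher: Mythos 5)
The paper does not actually prove this proposition: Appendix A is an unproved review of standard facts, with pointers to references such as M\'etivier's book, so there is no in-house proof to compare against. Your argument is the classical one --- express $a$ as a convolution with a rescaled reproducing kernel (periodized on $\t$), differentiate, apply Young's inequality, and scale --- and it is correct; the reverse annulus inequality via a smooth division symbol is also the standard route. Your closing remark is the important caveat: the $L^1$ norm of the inverting kernel $\check\phi$ depends on the aspect ratio $\Lambda = \lambda/\mu$, so the constant $C_{\alpha,q}$ in the last displayed inequality is only uniform once the shape of the annulus is fixed --- which is precisely how the paper uses it, on Littlewood--Paley rings of fixed dyadic width. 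You correctly identified this, and it matches the intended reading of the statement.
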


\begin{proposition}\label{paracomposition_Notations and functional analysis_bernstein2}
For all $\mu >0$, there is a constant $C$ such that for all $\lambda>0$ and for all $\alpha \in W^{\mu,\infty}$ with spectrum contained in $\set{\abs{\xi}\geq \lambda}$. one has the following estimate: 
\[\norm{a}_{L^\infty}\leq C \lambda^{-\mu} \norm{a}_{W^{\mu,\infty}}.\]
\end{proposition}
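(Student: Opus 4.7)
The plan is to decompose $a$ via the Littlewood--Paley partition of unity from Definition \ref{paracomposition_section Notations and functional analysis_def LP Theory} and exploit the spectral support hypothesis to truncate the decomposition to high frequencies. Since the spectrum of $a$ lies in $\set{\abs{\xi}\geq \lambda}$, the dyadic blocks $P_j a$ vanish as soon as $2^j$ is sufficiently smaller than $\lambda$, so only the tail
\[
 a=\sum_{j:\, 2^j\gtrsim \lambda} P_j a
\]
contributes (with at most $O(1)$ boundary indices where $2^j\sim \lambda$ that we handle trivially by $\norm{P_j a}_{L^\infty}\lesssim \norm{a}_{L^\infty}$ after rescaling).

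The key step is the dyadic bound
\[
 \norm{P_j a}_{L^\infty}\leq C\,2^{-j\mu}\norm{a}_{W^{\mu,\infty}}\qquad \text{for } 2^j\gtrsim \lambda .
\]
When $\mu$ is a positive integer I would write $P_j a = m_j(D)\partial_x^{\mu} a$ with $m_j(\xi)=\widetilde P_j(\xi)/(i\xi)^{\mu}$, where $\widetilde P_j$ is a smooth bump on the ring $\set{\abs{\xi}\sim 2^j}$ equal to $1$ on $\supp P_j$; a standard dilation/scaling argument shows that the kernel of $m_j(D)$ has $L^1$ norm of size $2^{-j\mu}$, so Young's inequality closes the estimate. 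When $\mu$ is not an integer, $W^{\mu,\infty}$ is the H\"older--Zygmund space and the same dyadic bound is essentially its Littlewood--Paley characterization $\norm{a}_{W^{\mu,\infty}}\sim \sup_j 2^{j\mu}\norm{P_j a}_{L^\infty}$; one can alternatively interpolate between the two nearest integer Sobolev cases.

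Summing over the active frequencies via the triangle inequality and using $\mu>0$ to sum the resulting geometric series then gives
\[
 \norm{a}_{L^\infty}\leq \sum_{j:\, 2^j\gtrsim \lambda}\norm{P_j a}_{L^\infty}\leq C\norm{a}_{W^{\mu,\infty}}\!\!\sum_{j:\, 2^j\gtrsim \lambda}2^{-j\mu}\leq C'\lambda^{-\mu}\norm{a}_{W^{\mu,\infty}} ,
\]
which is the advertised estimate (the geometric tail is dominated by its first term, of size $\lambda^{-\mu}$). There is no substantial obstacle: the spectral support hypothesis does all of the work by forcing the dyadic sum to begin at frequency $\gtrsim \lambda$, and the only mild care is in the fractional-$\mu$ case where one must appeal to the Zygmund characterization to get the dyadic bound uniformly in $j$.
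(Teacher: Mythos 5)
Your proof is correct and is the standard dyadic argument. The paper states this Bernstein-type lemma without proof (it is classical and can be found, e.g., in M\'etivier's lecture notes cited in the paper), so there is no paper proof to compare against; the route you take — Littlewood--Paley decomposition, spectral localization forcing $P_j a = 0$ for $2^j\ll\lambda$, the dyadic bound $\norm{P_j a}_{L^\infty}\lesssim 2^{-j\mu}\norm{a}_{W^{\mu,\infty}}$ obtained by factoring through $\partial_x^\mu$ and a rescaled $L^1$ kernel bound (integer $\mu$) or the Zygmund characterization $C^\mu_* = W^{\mu,\infty}$ for $\mu\notin\n$, then summing the geometric tail — is exactly the expected one.

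One small remark about your handling of the ``boundary indices'' $2^j\sim\lambda$: the estimate $\norm{P_j a}_{L^\infty}\lesssim\norm{a}_{L^\infty}$ cannot be used in a proof whose conclusion bounds $\norm{a}_{L^\infty}$, and in any case it is unnecessary. The dyadic bound $\norm{P_j a}_{L^\infty}\lesssim 2^{-j\mu}\norm{a}_{W^{\mu,\infty}}$ holds for every $j\geq 1$, in particular for the boundary ones; and for $\lambda\lesssim 1$ the claim is trivial since $\lambda^{-\mu}\gtrsim 1$ and $\norm{a}_{L^\infty}\leq\norm{a}_{W^{\mu,\infty}}$. So the cleanest phrasing is: reduce to $\lambda\geq 2$, note that the spectral hypothesis kills all $P_j a$ with $2^{j+1}<\lambda$, and then every surviving block obeys the same $2^{-j\mu}$ bound.
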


\begin{definition}[Zygmund spaces on $\d$]\label{paracomposition_Notations and functional analysis_def Zygmund spaces on r}
For $r\in \r$ we define the space: 
\[
C^r_*(\d) \subset \sr'(\d), \text{ by }\
C^r_*(\d)=\set{u\in\sr'(\d),\norm{u}_r=\sup_k 2^{kr}\norm{u_k}_\infty<\infty},\]
 equipped with its canonical topology giving it a Banach space structure.\\
 It's a classical result that for $r\notin \n$, $C^r_*(\d)=W^{r,\infty}(\d)$ the classic H{\"o}lder spaces.
\end{definition}

\begin{proposition} \label{paracomposition_Notations and functional analysis_proposition Zygmund spaces on balls}
Let $\b$ be a ball with center 0. There exists a constant C such that for all $r>0$ and for all $(u_q)_{q\in \n}\in \sr'(\d)$ verifying for all $q$:
\[\supp  \hat{u_q} \subset  2^q \b  \text{ and }  (2^{qr}\norm{u_q}_\infty)_{q\in \n} \text{ is  bounded} \]
\[\text{then}, \ u=\sum_q u_q \in C^r_*(\d) \text{ and } \norm{u}_{r} \leq \frac{C}{1-2^{-r}} \displaystyle{sup_{q \in \n}}2^{qr}\norm{u_q}_\infty. \]
\end{proposition}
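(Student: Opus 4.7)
The plan is to estimate the Littlewood--Paley blocks $P_j u$ of the candidate sum $u=\sum_q u_q$ by exploiting the frequency localization hypothesis $\supp \hat{u_q}\subset 2^q\b$. First I would fix a constant $N$ (depending only on the ball $\b$) such that the support of $P_j$ is disjoint from $2^q\b$ whenever $q<j-N$; this uses that $P_j$ is supported in a fixed dyadic annulus $\{\abs{\xi}\sim 2^j\}$ while $2^q\b$ is a ball of radius $\sim 2^q$ around the origin. With this choice one has the key frequency truncation
\[
P_j u =\sum_{q\geq j-N} P_j u_q,
\]
so that only the dyadic blocks of index $q\gtrsim j$ contribute.

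Next I would use boundedness of $P_j$ on $L^\infty$ (uniform in $j$, coming from convolution with an $L^1$-normalized kernel) to bound $\norm{P_j u_q}_\infty \leq C\norm{u_q}_\infty$, and then invoke the hypothesis $\norm{u_q}_\infty \leq M\,2^{-qr}$ with $M=\sup_q 2^{qr}\norm{u_q}_\infty$. Summing the resulting geometric series yields
\[
\norm{P_j u}_\infty \leq C\sum_{q\geq j-N} M\,2^{-qr}\leq \frac{C\,2^{Nr}}{1-2^{-r}}\,M\,2^{-jr},
\]
which after multiplying by $2^{jr}$ and taking the supremum in $j$ gives the announced estimate with constant $C/(1-2^{-r})$ (absorbing the fixed factor $2^{Nr}$).

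Before this estimate can be invoked, I must check that $u=\sum_q u_q$ is well defined as a tempered distribution. Since $\norm{u_q}_\infty\leq M\,2^{-qr}$ and $r>0$, the series converges absolutely in $L^\infty(\d)$, hence in $\sr'(\d)$, so $u$ is a legitimate element of $\sr'$. The main (very minor) obstacle is simply making the constant $N$ explicit in terms of the chosen ball $\b$ and the support of $P_0$, but since the statement absorbs all such dimensional constants into the unspecified $C$, no care is needed beyond what is indicated above.
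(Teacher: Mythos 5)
Your proof is the standard Littlewood--Paley argument for this proposition, which the paper cites without supplying a proof of its own; the decomposition $P_j u=\sum_{q\ge j-N}P_j u_q$, the uniform $L^\infty$ boundedness of the blocks $P_j$, and the geometric-series summation are exactly the right ingredients, and the preliminary $\sr'$-convergence check is correctly handled via absolute convergence in $L^\infty$ for $r>0$.

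One small point deserves care, and it actually exposes a slight imprecision in the paper's statement rather than a gap in your argument. The factor $2^{Nr}$ that you produce (with $N$ depending only on the radius of $\b$ and the support of $P_0$) is not a fixed dimensional constant: it depends on $r$ and grows without bound as $r\to\infty$. It therefore cannot be absorbed into a constant $C$ that is claimed to be uniform in $r$, as the quantifier order of the proposition ("there exists $C$ such that for all $r>0$...") would require. The bound you actually obtain is
\[
\norm{u}_{r}\;\le\;\frac{C\,2^{Nr}}{1-2^{-r}}\,\sup_{q}2^{qr}\norm{u_q}_\infty,
\]
which matches the form given in standard references where the constant is written $C_r$ (allowed to depend on $r$). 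If the ball $\b$ is chosen small enough (so that $2^q\b$ only meets the annulus of $P_j$ when $q\ge j$, i.e.\ $N=0$), the extra factor disappears and the stated $\frac{C}{1-2^{-r}}$ is obtained with $C$ truly universal; but for an arbitrary ball one must accept the $r$-dependence. This is a cosmetic matter that does not affect any of the applications in the paper, since $r$ is fixed throughout, but you should not describe $2^{Nr}$ as a "fixed factor" to be silently absorbed.
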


\begin{definition}[Sobolev spaces on $\d$]\label{paracomposition_Notations and functional analysis_def Sobolev spaces on r}
It is also a classical result that for $s\in \r$ :
\[H^s(\d)=\set{u\in\sr'(\d),\abs{u}_s= \bigg(\sum_k 2^{2ks} {\norm{u_k}_{L^2}}^2 \bigg)^{\frac{1}{2}}<\infty},\]
 with the right hand side equipped with its canonical topology giving it a Hilbert space structure and $\abs{\ }_s$ is equivalent to the usual norm on $\norm{\ }_{H^s}$.
\end{definition}

\begin{proposition} \label{paracomposition_Notations and functional analysis_proposition Sobolev spaces on balls}
Let $\b$ be a ball with center 0. There exists a constant C such that for all $s>0$ and for all $(u_q)_{\in \n}\in \sr'(\d)$ verifying for all $q$:
\[ \supp \hat{u_q} \subset  2^q \b \text{ and } (2^{qs}\norm{u_q}_{L^2})_{q\in \n} \text{ is in} \ L^2(\n), \]
\[\text{then}, \ u=\sum_q u_q \in H^s(\d) \text{ and } \abs{u}_s \leq \frac{C}{1-2^{-s}} \bigg(\sum_q 2^{2qs} {\norm{u_q}_{L^2}}^2 \bigg)^{\frac{1}{2}}. \]
\end{proposition}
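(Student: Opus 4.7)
The plan is to mirror the argument of Proposition \ref{paracomposition_Notations and functional analysis_proposition Zygmund spaces on balls}, replacing the $\ell^\infty$ bookkeeping by an $\ell^2$ one via Young's inequality on $\z$. The only geometric input is that since $\supp \widehat{u_q}\subset 2^q \b$, there is an integer $N$ depending only on $\b$ (essentially the radius of $\b$) such that $P_k u_q = 0$ whenever $q < k - N$; equivalently, only finitely many dyadic blocks above level $q$ can receive mass from $u_q$, and only tails $q \geq k - N$ contribute to $P_k u$.

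First I would check convergence of $u=\sum_q u_q$ in $\sr'(\d)$: the hypothesis $(2^{qs}\norm{u_q}_{L^2})\in \ell^2(\n)$ together with $s>0$ gives $\sum_q \norm{u_q}_{L^2}<\infty$ by Cauchy--Schwarz, so the series even converges in $L^2(\d)$. In particular the Fourier series/transform commutes with the summation and one may apply $P_k$ termwise.

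Next, using the support restriction and the $L^2$-boundedness of the Littlewood--Paley projectors (with a constant independent of $k$), I would estimate
\begin{equation*}
\norm{P_k u}_{L^2} \;\leq\; \sum_{q\geq k-N}\norm{P_k u_q}_{L^2}\;\leq\; C\sum_{q\geq k-N}\norm{u_q}_{L^2}.
\end{equation*}
Setting $a_q = 2^{qs}\norm{u_q}_{L^2}$ (extended by $0$ for $q<0$), this rewrites as
\begin{equation*}
2^{ks}\norm{P_k u}_{L^2}\;\leq\; C\sum_{q\geq k-N} 2^{(k-q)s}\, a_q \;=\; C\,(a \ast K)(k),
\end{equation*}
where the kernel $K(j) = 2^{js}\mathbf{1}_{j\leq N}$ satisfies
\begin{equation*}
\norm{K}_{\ell^1(\z)} \;=\; \sum_{j\leq N}2^{js} \;=\; \frac{2^{(N+1)s}}{2^s-1} \;\leq\; \frac{C_N}{1-2^{-s}},
\end{equation*}
the geometric sum converging precisely because $s>0$. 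Young's inequality $\ell^2 \ast \ell^1 \hookrightarrow \ell^2$ then yields
\begin{equation*}
\Bigl(\sum_k 2^{2ks}\norm{P_k u}_{L^2}^2\Bigr)^{1/2} \;\leq\; \frac{C'}{1-2^{-s}} \Bigl(\sum_q 2^{2qs}\norm{u_q}_{L^2}^2\Bigr)^{1/2},
\end{equation*}
which is exactly the claim by Definition \ref{paracomposition_Notations and functional analysis_def Sobolev spaces on r}.

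There is no real obstacle here; the whole argument is algebraic once one observes the support overlap is uniformly bounded. The only place where a genuine choice is made is in recognising that the summation on $q\geq k-N$ should be viewed as a convolution so that the constant scales cleanly as $\frac{C}{1-2^{-s}}$, matching the Zygmund version; and the hypothesis $s>0$ enters exactly to make the convolution kernel $\ell^1$.
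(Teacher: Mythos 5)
Your argument is the standard and correct one: isolate the finite "forward reach" $N$ of the support condition, bound $2^{ks}\norm{P_k u}_{L^2}$ by a one-sided discrete convolution against the geometric kernel $K(j)=2^{js}\mathbf{1}_{j\leq N}$, and close with Young's inequality $\ell^2*\ell^1\hookrightarrow\ell^2$. The preliminary observation that the series already converges in $L^2$ (by Cauchy--Schwarz, using $s>0$ to make $(2^{-qs})_{q\geq0}\in\ell^2$) correctly justifies applying $P_k$ termwise. The paper states this proposition without proof, as a classical Littlewood--Paley fact, so there is no in-paper proof to compare against; what you wrote is the expected argument, and it matches the analogous Zygmund version exactly as you intended.

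One small bookkeeping point deserves a flag, though it is really an imprecision inherited from the statement rather than a gap in your reasoning. You assert $\sum_{j\leq N}2^{js}=\frac{2^{(N+1)s}}{2^s-1}\leq\frac{C_N}{1-2^{-s}}$. Since $\frac{2^{(N+1)s}}{2^s-1}=\frac{2^{Ns}}{1-2^{-s}}$, this requires $2^{Ns}\leq C_N$, which holds with $C_N$ depending only on $N$ (equivalently only on $\b$) only when $N\leq 0$, i.e. when the ball is small enough that $2^q\b$ never meets the $k$-th dyadic annulus for $q<k$. For a general ball and $s$ ranging over all of $(0,\infty)$, the factor $2^{Ns}$ is genuinely unbounded, so the constant must be allowed to depend on $s$ as well (or $s$ restricted to a bounded interval, which is the situation in which the proposition is used). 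The proposition as written suffers from the same issue, so you are not introducing an error — but it would be cleaner to either display the $2^{Ns}$ factor explicitly or state that $C$ may depend on $\sup s$ over the range of interest. The $\frac{1}{1-2^{-s}}$ factor is the one that captures the relevant degeneration, namely $s\to0^+$.
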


We recall the usual nonlinear estimates in Sobolev spaces:
\begin{itemize}
\item If $u_j\in H^{s_j}(\d), j=1,2$, and $s_1+s_2>0$ then $u_1u_2 \in H^{s_0}(\d)$ and if
\[ s_0\leq s_j, j=1,2 \text{ and } s_0\leq s_1+s_2-\frac{1}{2}, \ \ \  \]
\[\text{then }  \norm{u_1u_2}_{H^{s_0}}\leq K \norm{u_1}_{H^{s_1}}\norm{u_2}_{H^{s_2}} ,\]
where the last inequality is strict if $s_1$ or $s_2$  or $-s_0$ is equal to $\frac{1}{2}$.
\item For all $C^\infty$ function F vanishing at the origin, if $u \in H^s(\d)$ with $s>\frac{1}{2}$ then
\[ \norm{F(u)}_{H^s} \leq C(\norm{u}_{H^s}),\]
for some non decreasing  function C depending only on F.
\end{itemize}

\subsection{Paradifferential operators}\label{paracomposition_Notions of microlocal analysis_Paradifferential Calculus}
We start by the definition of symbols with limited spatial regularity. Let $\w\subset \sr'$ be a Banach space.
\begin{definition}\label{paracomposition_Notions of microlocal analysis_Paradifferential Calculus_ def para symbol}
Given $m \in \r$, $\Gamma^m_\w(\d)$ denotes the space of locally bounded functions $a(x,\xi)$ on $\d\times (\hat{\d}\setminus 0)$, which are $C^\infty$ with respect to $\xi$ for $\xi \neq 0$ and such that, for all $\alpha \in \n$ and for all $\xi \neq 0$, the function $x \mapsto \partial^\alpha_\xi a(x,\xi)$ belongs to $\w$ and there exists a constant $C_\alpha$ such that, for all $\eps>0$:
\begin{equation}\label{paracomposition_Notions of microlocal analysis_Paradifferential Calculus_ definition growth xi condition para} 
\text{ for } \abs{\xi}>\eps, \norm{\partial^\alpha_\xi a(.,\xi)}_{\w}\leq C_{\alpha,\eps} (1+\abs{\xi})^{m-\abs{\alpha}}. 
\end{equation}
The spaces $\Gamma^m_\w(\d)$ are equipped with their natural Fr\'echet topology induced by the semi-norms defined by the best constants in \eqref{paracomposition_Notions of microlocal analysis_Paradifferential Calculus_ definition growth xi condition para} .
We will essentially work with $\w=W^{\rho,\infty}$ and write $\Gamma^m_\w=\Gamma^m_\rho$, for $\rho<0$ we use $\w=C_*^\rho$.
\end{definition}

For quantitative estimates we introduce as in \cite{Metivier08}:
\begin{definition}\label{paracomposition_Notions of microlocal analysis_Paradifferential Calculus_ definition semi-norms}
For $m\in \r$ and $a \in \Gamma^m_\w(\d)$, we set,
\[M^m_\w(a;n)=\sup_{\abs{\alpha}\leq n} \ \sup_{\abs{\xi}\geq\frac{1}{2}}\norm{(1+\abs{\xi})^{m-\abs{\alpha}}\partial^\alpha_\xi a(.,\xi)}_{\w}, \text{ for } n\in \n.\]
We will essentially work with $\w=W^{\rho,\infty},\rho \geq 0$ and write: 
\[
\Gamma^m_{W^{\rho,\infty}}(\d)=\Gamma^m_\rho(\d) \text{ and }
M^m_\rho(a)=M^m_{W^{\rho,\infty}}(a;1).
\]
Moreover we introduce the following spaces equipped with their natural Fr\'echet space structure:
\[
C^{\infty}_b(\d)=\cap_{\rho \geq 0}W^{\rho,\infty}, \ \Gamma^m_\infty(\d)=\cap_{\rho \geq 0}\Gamma^m_\rho(\d), \ \Gamma^{-\infty}_\rho(\d)=\cap_{m\in \r}\Gamma^m_\rho(\d) \text{ and,}
\]
\[
\Gamma^{-\infty}_\infty(\d)=\cap_{\rho \geq 0}\cap_{m\in \r}\Gamma^m_\rho(\d). 
\]
\end{definition}

\begin{remark}
In higher dimension the $1$ in the definition of $M^m_\rho$ should be replaced by $1+\lfloor \frac{d}{2}\rfloor$.
\end{remark}

\begin{definition}
Define an admissible cutoff function as a function $\psi^{B,b}\in C^\infty$,  $B>1,b>0$ that verifies:
\begin{enumerate}
\item 
\[
\psi^{B,b}(\eta,\xi)=0 \text{ when }
\abs{\xi}< B\abs{\eta}+b,
\text{ and }
\psi^{B,b}(\eta,\xi)=1 \text{ when } \abs{\xi}>B\abs{\eta}+b+1.
\]
\item for all $(\alpha,\beta)\in \n^d \times \n^d,$ there is $C_{\alpha_\beta}$, with $C_{0,0}\leq 1$, such that:
\begin{equation}\label{paracomposition_Notions of microlocal analysis_Paradifferential Calculus_definition cutoff growth hypothesis}
\abs{\partial_\xi^\alpha \partial_\eta^\beta \psi(\xi,\eta)}\leq C_{\alpha,\beta} (1+\abs{\xi})^{-\abs{\alpha}-\abs{\beta}}.
\end{equation}
\end{enumerate}
\end{definition}

\begin{definition-proposition}\label{paracomposition_Notions of microlocal analysis_Paradifferential Calculus_def para op}
Consider a real number $m\in \r$, a symbol $a\in \Gamma^m_\w$ and an admissible cutoff function $\psi^{B,b}$ define the paradifferential operator $T_a$ by:
\[\widehat{T_a u}(\xi)=(2\pi)\int_{\hat{\d}}\psi^{B,b}(\xi-\eta,\eta)\hat{a}(\xi-\eta,\eta)\hat{u}(\eta)d\eta,\]
where $\hat{a}(\eta,\xi)=\int e^{-ix.\eta}a(x,\xi)dx$ is the Fourier transform of $a$ with respect to the first variable. 
In the language of pseudodifferential operators:
\[T_a u=op(\sigma_a)u, \text{ where } \fr_x\sigma_a(\xi,\eta)=\psi^{B,b}(\xi,\eta) \fr_x a(\xi,\eta).\]
The connection between two different choices of cut-offs is the following:
\begin{equation}\label{paracomposition_Notions of microlocal analysis_Paradifferential Calculus_difference between 2 choices of cutoff}
\text{ for } a \in \Gamma^m_\rho, (B,B',b,b')\in ]1,+\infty[^2\times ]0,+\infty[^2, \ \sigma^{\psi^{B,b}}_a-\sigma^{\psi^{B',b'}}_a\in \Gamma^{m-\rho}_0.
\end{equation}
\end{definition-proposition}

An important property of paradifferential operators is their action on functions with localised spectrum.
\begin{lemma}\label{paracomposition_Notions of microlocal analysis_Paradifferential Calculus_propostion para action spectrum}
Consider two real numbers $m\in \r$, $\rho\geq 0$, a symbol $a\in \Gamma^m_0(\d)$, an admissible cutoff function $\psi^{B,b}$ and $u \in \sr(\d)$.
\begin{itemize}
\item For $R>>b$, if $\supp \fr u \subset \set{\abs{\xi}\leq R},$ then: 
\begin{equation}\label{paracomposition_Notions of microlocal analysis_Paradifferential Calculus_propostion para action spectrum on rings}
\supp \fr T_a u \subset \set{\abs{\xi}\leq (1+\frac{1}{B})R-\frac{b}{B}},
\end{equation}
\item For $R>>b$, if $\supp \fr u \subset \set{\abs{\xi}\geq R},$ then: 
\begin{equation}\label{paracomposition_Notions of microlocal analysis_Paradifferential Calculus_propostion para action spectrum on balls}
\supp \fr T_a u \subset \set{\abs{\xi}\geq (1-\frac{1}{B})R+\frac{b}{B}},
\end{equation}
\end{itemize}
\end{lemma}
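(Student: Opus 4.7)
The plan is to start from the explicit Fourier-side formula
\[
\widehat{T_a u}(\xi)=(2\pi)\int_{\hat{\d}}\psi^{B,b}(\xi-\eta,\eta)\,\hat{a}(\xi-\eta,\eta)\,\hat{u}(\eta)\,d\eta,
\]
and to exploit two supports simultaneously: the support of $\hat{u}$ (which constrains $\eta$) and the support of the admissible cutoff $\psi^{B,b}$ (which constrains the relative size of $\xi-\eta$ and $\eta$). The key first step is to translate the defining property of $\psi^{B,b}$ into the integration variables at hand: since $\psi^{B,b}(\zeta,\eta)=0$ whenever $|\eta|<B|\zeta|+b$, plugging $\zeta=\xi-\eta$ shows that the integrand vanishes unless
\[
|\eta|\geq B\,|\xi-\eta|+b,\qquad\text{equivalently}\qquad |\xi-\eta|\leq \tfrac{|\eta|-b}{B}.
\]
This is the sole ingredient tying $\xi$ to $\eta$; everything else is triangle inequality.

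For the first assertion, I would assume $\hat{u}$ is supported in $\{|\eta|\leq R\}$. On the support of the integrand we then have $|\eta|\leq R$, hence $|\xi-\eta|\leq (R-b)/B$, and by the triangle inequality
\[
|\xi|\leq |\eta|+|\xi-\eta|\leq R+\tfrac{R-b}{B}=\bigl(1+\tfrac1B\bigr)R-\tfrac{b}{B},
\]
which is exactly the claimed inclusion. For the second assertion, I would similarly assume $\hat{u}$ is supported in $\{|\eta|\geq R\}$; on the support of the integrand one then has $|\eta|\geq R$ and still $|\xi-\eta|\leq (|\eta|-b)/B$. The reverse triangle inequality gives
\[
|\xi|\geq |\eta|-|\xi-\eta|\geq |\eta|-\tfrac{|\eta|-b}{B}=\bigl(1-\tfrac1B\bigr)|\eta|+\tfrac{b}{B}\geq\bigl(1-\tfrac1B\bigr)R+\tfrac{b}{B},
\]
which is the second bound. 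The hypothesis $R\gg b$ is only used to ensure $R-b\geq 0$ so that the upper bound on $|\xi-\eta|$ is meaningful; for the second inclusion it also guarantees that the right-hand side is nonnegative.

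There is no real obstacle here: the lemma is a direct unpacking of the support condition imposed by the admissible cutoff on the Fourier side. The only mild subtlety is being careful that $\hat{a}$ does not impose any further constraint on $\eta$ (it is a tempered distribution in the first argument and smooth in the second, so this is automatic), and to note that $B>1$ implies the two coefficients $(1+1/B)$ and $(1-1/B)$ are strictly less than $2$ and strictly positive respectively, which is what makes the spectral-localization statements useful in applications via Propositions \ref{paracomposition_Notations and functional analysis_proposition Zygmund spaces on balls} and \ref{paracomposition_Notations and functional analysis_proposition Sobolev spaces on balls}.
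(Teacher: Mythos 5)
Your proof is correct and is the direct, standard argument: unpack the support constraint of the admissible cutoff on the Fourier side and apply the triangle inequality (resp. reverse triangle inequality), noting monotonicity in $|\eta|$ and the fact that $B>1$ makes $1-\tfrac1B>0$. The paper states this lemma without proof, citing its standard references on paradifferential calculus, but the intended argument is exactly the one you give.
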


The main features of symbolic calculus for paradifferential operators are given by the following theorems taken from \cite{Metivier08} and \cite{Ayman18}.
\begin{theorem}\label{paracomposition_Notions of microlocal analysis_Paradifferential Calculus_para continuity}
Let $m \in \r$. if $a\in \Gamma^m_0(\d)$, then $T_a$ is of order m. Moreover, for all $\mu \in \r$ there exists a constant K such that:
\[\norm{T_a}_{H^\mu \rightarrow H^{\mu-m}}\leq K M^m_0(a),\text{ and,}\]
\[\norm{T_a}_{W^{\mu,\infty} \rightarrow W^{\mu-m,\infty}}\leq K M^m_0(a), \mu \notin \n.\]
\end{theorem}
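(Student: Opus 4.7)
The plan is to run the standard dyadic reduction: decompose the symbol $a$ spectrally in the $x$ variable and exploit the frequency asymmetry built into the admissible cutoff $\psi^{B,b}$ to localize $T_a u$ piece by piece. Concretely, write $a(x,\xi)=\sum_{q\geq 0}a_q(x,\xi)$ where $a_0=P_0(D_x)a$ and $a_q=P_q(D_x)a$ for $q\geq 1$, so that $\hat a_q(\zeta,\eta)$ is supported in $|\zeta|\sim 2^q$. Since the symbol of $T_{a_q}$ is $\psi^{B,b}(\zeta,\eta)\hat a_q(\zeta,\eta)$, the condition $|\zeta|\geq B|\eta|+b$ from the cutoff forces the integration in $\eta$ to be restricted to $|\eta|\gtrsim B^{-1}2^q$, and then Lemma \ref{paracomposition_Notions of microlocal analysis_Paradifferential Calculus_propostion para action spectrum} places the spectrum of $T_{a_q}u$ in an annulus of size $2^q$ (more precisely in $\{|\xi|\sim 2^q\}$ with constants depending on $B,b$). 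Thus $T_a u=\sum_q T_{a_q}u$ is itself a near-dyadic sum, which is what allows Propositions \ref{paracomposition_Notations and functional analysis_proposition Sobolev spaces on balls} and \ref{paracomposition_Notations and functional analysis_proposition Zygmund spaces on balls} to reassemble the $H^{\mu-m}$ and $C^{\mu-m}_*$ norms.

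Next I would control each $T_{a_q}u$ in $L^2$ and in $L^\infty$. Writing $T_{a_q}u$ as the convolution of $u$ with the kernel $K_q(x,y)=(2\pi)^{-1}\int e^{i(x-y)\eta}\sigma_{a_q}(x,\eta)\,d\eta$ and integrating by parts in $\eta$, the definition of $M^m_0(a)$ together with the support constraint $|\eta|\sim 2^q$ give the pointwise bound $|K_q(x,y)|\leq C_N M^m_0(a)\,2^{q(m+1)}(1+2^q|x-y|)^{-N}$ for every $N$. Integrating yields $\|T_{a_q}u\|_{L^p}\leq K M^m_0(a)\,2^{qm}\|u\|_{L^p}$ for $p\in\{2,\infty\}$, and by the spectral localization only the Littlewood–Paley blocks of $u$ with $k\sim q$ contribute, so actually $\|T_{a_q}u\|_{L^p}\leq K M^m_0(a)\,2^{qm}\|P_{\sim q}u\|_{L^p}$.

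To conclude, I would assemble the pieces. For the Sobolev estimate, since $T_{a_q}u$ has spectrum in a ring of size $2^q$,
\[
\norm{T_a u}_{H^{\mu-m}}^2\lesssim \sum_q 2^{2q(\mu-m)}\norm{T_{a_q}u}_{L^2}^2\lesssim K^2 M^m_0(a)^2\sum_q 2^{2q\mu}\norm{P_{\sim q}u}_{L^2}^2\lesssim K^2 M^m_0(a)^2\norm{u}_{H^\mu}^2,
\]
by Proposition \ref{paracomposition_Notations and functional analysis_proposition Sobolev spaces on balls}. For the Hölder estimate with $\mu\notin\n$ the same computation with $L^\infty$ in place of $L^2$ and Proposition \ref{paracomposition_Notations and functional analysis_proposition Zygmund spaces on balls} yields $\norm{T_au}_{C^{\mu-m}_*}\lesssim K M^m_0(a)\norm{u}_{C^\mu_*}$, and the identification $C^s_*=W^{s,\infty}$ for non-integer $s$ gives the stated $W^{\mu,\infty}\to W^{\mu-m,\infty}$ bound.

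The only genuinely delicate point is checking the kernel estimate with the constant $M^m_0(a)$ (rather than some higher seminorm): this requires that the $N$ integrations by parts in $\eta$ cost only $\partial^N_\eta$ applied to $\sigma_{a_q}$, which by the Leibniz rule distributes between the cutoff $\psi^{B,b}$, satisfying \eqref{paracomposition_Notions of microlocal analysis_Paradifferential Calculus_definition cutoff growth hypothesis}, and the Fourier multiplier form $(1+|\eta|)^{m-N}$ coming from $\partial^N_\eta\hat a_q$; this is controlled by $M^m_0(a)$ alone since the $W^{0,\infty}$-seminorm of $\partial^N_\eta a$ is bounded by $M^m_0(a)(1+|\eta|)^{m-N}$. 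This is the main —though entirely classical— technical step of the proof. The integer case $\mu\in\n$ on Hölder spaces fails precisely because Proposition \ref{paracomposition_Notations and functional analysis_proposition Zygmund spaces on balls} provides only the Zygmund, not the Lipschitz, norm, which is why we exclude it from the statement.
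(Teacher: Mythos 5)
Your decomposition is in the wrong variable, and this produces a genuine gap. You dyadically split the symbol in its \emph{spatial} Fourier variable, $a_q=P_q(D_x)a$, and then claim that $T_{a_q}u$ is spectrally localized in an annulus $\{|\xi|\sim 2^q\}$. But look again at what the paraproduct cutoff does. In Definition-Proposition~\ref{paracomposition_Notions of microlocal analysis_Paradifferential Calculus_def para op} the cutoff $\psi^{B,b}(\xi-\eta,\eta)$ is nonzero only when $|\eta|\geq B|\xi-\eta|+b$: the frequency of $u$ dominates the spatial frequency of $a$, not the other way around (your condition ``$|\zeta|\geq B|\eta|+b$'' has the two frequencies interchanged, and is in fact the defining property of the \emph{remainder} rather than the paraproduct). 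Consequently, if $\hat a_q$ is supported in $\{|\zeta|\sim 2^q\}$, the cutoff constrains the frequency of $u$ to $|\eta|\gtrsim B\,2^q$ with \emph{no upper bound}, so the output frequency $\xi=\zeta+\eta$ only satisfies $|\xi|\gtrsim 2^q$ and is not confined to an annulus. Lemma~\ref{paracomposition_Notions of microlocal analysis_Paradifferential Calculus_propostion para action spectrum} transfers spectral-support information from $u$ to $T_au$, not from $a_q$ to $T_{a_q}u$, so it cannot be invoked the way you use it. The later assertion that ``only the Littlewood--Paley blocks of $u$ with $k\sim q$ contribute'' is therefore false: every block with $k\gtrsim q+\log_2 B$ contributes. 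Likewise, the kernel bound you state for $K_q(x,y)$ presupposes an $\eta$-integration restricted to $\{|\eta|\sim 2^q\}$, a restriction that is simply absent.

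The correct procedure is the opposite decomposition, which is what the paper itself carries out in the more general exotic setting (see the proof of Theorem~\ref{Cont of lim reg exotic sym_thm for gen symb} and its specialisation Theorem~\ref{Cont of lim reg exotic sym_thm for para symb} in Appendix~\ref{Cont of lim reg exotic sym}; the present statement is a particular case with $\rho=1$). One splits the symbol in the \emph{frequency} variable, $a_k(x,\xi)=a(x,\xi)P_k(\xi)$, which is the same as writing $T_au=\sum_k T_a(P_ku)$. Now each piece $T_a(P_ku)$ really is localised in an annulus around $2^k$ by Lemma~\ref{paracomposition_Notions of microlocal analysis_Paradifferential Calculus_propostion para action spectrum}, the kernel of $\op(a_k)$ does integrate over $\{|\eta|\sim 2^k\}$ and obeys the decay estimate you wrote with $q$ replaced by $k$, the resulting $L^p$ bound is $\|T_a(P_ku)\|_{L^p}\lesssim M^m_0(a)2^{km}\|P_ku\|_{L^p}$, and Propositions~\ref{paracomposition_Notations and functional analysis_proposition Sobolev spaces on balls} and~\ref{paracomposition_Notations and functional analysis_proposition Zygmund spaces on balls} reassemble the sum exactly as you intended. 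Your last two displayed computations then go through verbatim once $q$ is read as the Littlewood--Paley index of $u$ rather than of $a$.
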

\begin{theorem} \label{paracomposition_Notions of microlocal analysis_Paradifferential Calculus_symbolic calculus para precised} 
Let $m,m' \in \r$, and $\rho>0$, $a \in \Gamma^m_\rho(\d)$and $b \in \Gamma^{m'}_\rho(\d)$. 
\begin{itemize}
\item Composition: Then $T_a T_b$ is a paradifferential operator with symbol: $$a \otimes b\in \Gamma^{m+m'}_\rho(\d),\text{ more precisely,}$$
\[
T^{\psi^{B,b}}_a T^{\psi^{B',b}}_b= T^{\psi^{\frac{BB'}{B+B'+1},b}}_{a\otimes b}.
\]
Moreover $T_a T_b- T_{a\#b}$ is of order $m+m'-\rho$ where $a \#b $ is defined by:
\[a \#b=\sum_{\abs{\alpha}<\rho }\frac{1}{i^{\abs{\alpha}}\alpha!} \partial^\alpha_\xi a \partial^\alpha_x b, \]
and there exists $r\in \Gamma^{m+m'-\rho}_0(\d)$ such that:
\[ M^{m+m'-\rho}_0(r) \leq K (M^m_\rho (a) M^{m'}_0(b)+M^m_\rho (a) M^{m'}_0(b)), \]
and we have
 \[T^{\psi^{B,b}}_a T^{\psi^{B',b}}_b- T^{\psi^{\frac{BB'}{B+B'+1},b}}_{a\#b}=T^{\psi^{\frac{BB'}{B+B'+1},b}}_r, \]

\item  Adjoint: The adjoint operator of $T_a$, $T_a^*$ is a paradifferential operator of order m with  symbol $a^*$ defined by:
\begin{equation}\label{paracomposition_Notions of microlocal analysis_Paradifferential Calculus_definition adjoint para}
a^*=\sum_{\abs{\alpha}<\rho} \frac{1}{i^{\abs{\alpha}}\alpha!}\partial^\alpha_\xi \partial^\alpha_x \bar{a}. 
\end{equation}
Moreover, for all $\mu \in \r$ there exists a constant K such that
\[ \norm{T_a^*-T_{a^*}}_{H^\mu \rightarrow H^{\mu-m+\rho}} \leq K M^m_\rho (a). \]
\end{itemize}
\end{theorem}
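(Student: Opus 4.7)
The plan is to identify $T_a$ and $T_b$ with pseudodifferential operators with symbols
$$\sigma_a(x,\xi)=\mathcal{F}^{-1}_{\eta\to x}\bigl[\psi^{B,b}(\eta,\xi)\hat{a}(\eta,\xi)\bigr],\qquad \sigma_b(x,\xi)=\mathcal{F}^{-1}_{\eta\to x}\bigl[\psi^{B',b}(\eta,\xi)\hat{b}(\eta,\xi)\bigr],$$
and then use the standard Moyal-type formula for the composition of pseudodifferential operators:
$$\sigma_{T_aT_b}(x,\xi)=\frac{1}{2\pi}\int\!\!\int e^{-iy\eta}\sigma_a(x,\xi+\eta)\sigma_b(x+y,\xi)\,dy\,d\eta.$$
The first job is to isolate the cutoff structure: for fixed $\xi$, the $x$-spectrum of $\sigma_a(\cdot,\xi+\eta)$ is supported in $|\zeta|\le (|\xi+\eta|-b)/B$ and that of $\sigma_b(\cdot,\xi)$ in $|\eta|\le (|\xi|-b)/B'$. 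A direct triangle-inequality bookkeeping on the total $x$-frequency $\zeta+\eta$ yields that $\sigma_{T_aT_b}$ can be written as $\sigma^{\psi^{B'',b}}_{a\otimes b}$ with $B''=BB'/(B+B'+1)$; this identifies the effective cutoff parameter.

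Next, to extract the asymptotic formula $a\#b$, I would Taylor-expand $\sigma_a(x,\xi+\eta)$ in $\eta$ up to order $\lfloor\rho\rfloor$:
$$\sigma_a(x,\xi+\eta)=\sum_{|\alpha|<\rho}\frac{\eta^{\alpha}}{\alpha!}\partial^{\alpha}_{\xi}\sigma_a(x,\xi)+R_{\rho}(x,\xi,\eta),$$
with $R_\rho$ controlled by $M^m_\rho(a)$. Plugging this into the composition formula and using $e^{-iy\eta}\eta^{\alpha}=(i\partial_y)^{\alpha}e^{-iy\eta}$ followed by integration by parts transforms $\eta^\alpha$ into $\partial^{\alpha}_x$ acting on $\sigma_b$; removing the inner cutoffs (the discrepancy is harmless by \eqref{paracomposition_Notions of microlocal analysis_Paradifferential Calculus_difference between 2 choices of cutoff}) produces exactly $a\#b$. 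The leftover term involving $R_\rho$ is a symbol $r$ whose derivatives fall on $b$ with total gain $\rho$; tracking regularity carefully shows $r\in\Gamma^{m+m'-\rho}_0$ with the stated tame bilinear bound in $M^m_\rho(a)M^{m'}_0(b)+M^m_0(a)M^{m'}_\rho(b)$.

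For the adjoint statement, I would start from the sesquilinear identity $\langle T_a u,v\rangle=\langle u,T_a^*v\rangle$ and the kernel representation, which gives the exact formula
$$\sigma_{T_a^*}(x,\xi)=\frac{1}{2\pi}\int\!\!\int e^{-iy\eta}\overline{\sigma_a}(x-y,\xi-\eta)\,dy\,d\eta.$$
Taylor-expanding $\overline{\sigma_a}(x-y,\xi-\eta)$ simultaneously in $y$ and $\eta$ around $(x,\xi)$, up to joint order $\lfloor\rho\rfloor$, and integrating by parts in $y$ exactly as above produces the series $\sum_{|\alpha|<\rho}\frac{1}{i^{|\alpha|}\alpha!}\partial^{\alpha}_\xi\partial^{\alpha}_x\bar{a}$; the remainder, after re-applying the continuity theorem (Theorem \ref{paracomposition_Notions of microlocal analysis_Paradifferential Calculus_para continuity}), yields the $H^\mu\to H^{\mu-m+\rho}$ bound with constant $K\,M^m_\rho(a)$.

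The routine parts are the algebraic Taylor expansion and the integration by parts that turn $\eta$-polynomials into $x$-derivatives; the main obstacle is the bookkeeping of the cutoff parameters: verifying that composition of the two localising cutoffs really gives an admissible cutoff with parameter $BB'/(B+B'+1)$, and that the Taylor remainder truly lies in $\Gamma^{m+m'-\rho}_0$ with a bound \emph{tame} in $(a,b)$ (only one of them paying the $\rho$-regularity cost). The other mildly delicate point is that $\rho$ need not be an integer, so the Taylor remainder must be handled via a fractional H\"older/Zygmund argument using the characterisation of $W^{\rho,\infty}$ by finite differences, relying on Propositions \ref{paracomposition_Notations and functional analysis_bernstein1}--\ref{paracomposition_Notations and functional analysis_bernstein2} to convert spectral localisation into the right gain.
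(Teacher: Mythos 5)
The paper does not prove this theorem: it is quoted verbatim in Appendix~A as "taken from \cite{Metivier08} and \cite{Ayman18}," so there is no in-paper argument to compare against. Your sketch is the standard M\'etivier-style proof — Kohn--Nirenberg composition formula, spectral bookkeeping on the cutoffs, Taylor expansion of $\sigma_a(x,\xi+\eta)$ in $\eta$, integration by parts converting $\eta^\alpha$-weights into $\partial_x^\alpha$ on $\sigma_b$, and a finite-difference argument to handle non-integer $\rho$ — and it is the right roadmap. Two small checks confirm that your bookkeeping is correct: on the $x$-Fourier side $\hat\sigma_{T_aT_b}(\zeta,\xi)=\tfrac1{2\pi}\int\hat\sigma_a(\zeta-\eta,\xi+\eta)\hat\sigma_b(\eta,\xi)\,d\eta$, and on the support one has
\[
\abs{\zeta}\le\frac{\abs{\xi+\eta}-b}{B}+\frac{\abs{\xi}-b}{B'}
\le(\abs{\xi}-b)\left(\frac1B+\frac1{BB'}+\frac1{B'}\right)
=\frac{\abs{\xi}-b}{BB'/(B+B'+1)},
\]
which is exactly the claimed cutoff parameter $B''=BB'/(B+B'+1)$. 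You also implicitly corrected a typo in the stated remainder bound: as printed it repeats $M^m_\rho(a)M^{m'}_0(b)$ twice, whereas the correct tame bilinear bound you supply is $M^m_\rho(a)M^{m'}_0(b)+M^m_0(a)M^{m'}_\rho(b)$. The one place your sketch is thinnest is precisely the remainder estimate for non-integer $\rho$ (the fractional Taylor remainder must be organized dyadically and the gain split between the two factors so as to produce the bilinear tame bound rather than a cruder $M^m_\rho(a)M^{m'}_\rho(b)$); you flag this as the delicate step, which is accurate, but in a full write-up it would need to be carried out explicitly via the Littlewood--Paley characterisation of $W^{\rho,\infty}$ together with Propositions~\ref{paracomposition_Notations and functional analysis_bernstein1}--\ref{paracomposition_Notations and functional analysis_bernstein2}.
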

If $a=a(x)$ is a function of $x$ only, the paradifferential operator $T_a$ is called a paraproduct. 
It follows from Theorem \ref{paracomposition_Notions of microlocal analysis_Paradifferential Calculus_symbolic calculus para precised} and the Sobolev embedding that:
\begin{itemize}
\item If $a \in H^\alpha(\d)$ and $b \in H^\beta(\d)$ with $\alpha,\beta>\frac{1}{2}$, then
\[T_aT_b-T_{ab} \text{ is of order } -\bigg( min\set{\alpha,\beta}-\frac{1}{2} \bigg).\]
\item If $a \in H^\alpha(\d)$ with $\alpha>\frac{1}{2}$, then
\[T_a^*-T_{a^*} \text{ is of order } -\bigg(\alpha-\frac{1}{2} \bigg).\]
\item If $a \in W^{r,\infty}(\d)$, $r\in \n$ then:
\[\norm{au-T_au}_{H^r} \leq C \norm{a}_{W^{r,\infty}} \norm{u}_{L^2}.\]
\end{itemize}
An important feature of paraproducts is that they are well defined for function $a=a(x)$ which are not $L^\infty$ but merely in some Sobolev spaces $H^r$ with $r<\frac{d}{2}$.
\begin{proposition}
Let $m>0$. If $a\in H^{\frac{1}{2}-m}(\d)$ and $u \in H^\mu(\d)$ then $T_au \in  H^{\mu-m}(\d)$. Moreover,
\[ \norm{T_a u}_{H^{\mu -m}}\leq K \norm{a}_{H^{\frac{1}{2} -m}}\norm{u}_{H^{\mu}} \]
\end{proposition}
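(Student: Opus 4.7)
The plan is to prove this low-regularity Bony-type paraproduct estimate by a classical dyadic block analysis, exploiting the spectral localization imposed by the admissible cutoff $\psi^{B,b}$ in Definition-Proposition \ref{paracomposition_Notions of microlocal analysis_Paradifferential Calculus_def para op}. Writing $u=\sum_{k\geq 0} P_k u$ with the notation of Definition \ref{paracomposition_section Notations and functional analysis_def LP Theory}, one has $T_a u=\sum_{k} T_a P_k u$, and by Lemma \ref{paracomposition_Notions of microlocal analysis_Paradifferential Calculus_propostion para action spectrum} each summand $T_a P_k u$ has Fourier support in a ring $\set{\abs{\xi}\sim 2^k}$ for $k$ sufficiently large; up to an innocuous smoothing remainder at low frequencies, each piece is morally of the form $(P_{\leq k-N_0} a)\cdot P_k u$, where $N_0$ is a fixed integer depending only on the cutoff parameters $B,b$.

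The heart of the proof is the dyadic $L^\infty$ control of the ``low-frequency part'' of $a$, which is precisely where the hypothesis $m>0$ enters. For every dyadic block $P_j a$, Bernstein's inequality (Proposition \ref{paracomposition_Notations and functional analysis_bernstein1}) combined with the very definition of the $H^{\frac{1}{2}-m}$ norm gives
\[
\norm{P_j a}_{L^\infty}\leq C 2^{j/2}\norm{P_j a}_{L^2}\leq C 2^{jm}\norm{a}_{H^{\frac{1}{2}-m}}.
\]
Summing over $j\leq k-N_0$, the geometric series is dominated by its largest term precisely because $m>0$, which yields the key bound
\[
\norm{P_{\leq k-N_0} a}_{L^\infty}\leq C 2^{km}\norm{a}_{H^{\frac{1}{2}-m}}.
\]

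The remainder is reassembly. Using the trivial inequality $\norm{P_k u}_{L^2}\leq 2^{-k\mu}\norm{u}_{H^\mu}$ together with frequency localization and H\"older, one obtains the dyadic estimate
\[
\norm{T_a P_k u}_{L^2}\leq C\, 2^{k(m-\mu)}\norm{a}_{H^{\frac{1}{2}-m}}\norm{u}_{H^\mu}.
\]
Since each $T_a P_k u$ is spectrally localized in a ring of size $2^k$, Proposition \ref{paracomposition_Notations and functional analysis_proposition Sobolev spaces on balls} reassembles the pieces in $H^{\mu-m}$ with the quasi-orthogonal bound
\[
\norm{T_a u}_{H^{\mu-m}}^2\leq C\sum_k 2^{2k(\mu-m)}\norm{T_a P_k u}_{L^2}^2\leq C\norm{a}_{H^{\frac{1}{2}-m}}^2\norm{u}_{H^\mu}^2,
\]
which is the desired inequality.

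The only subtlety I anticipate is the careful matching of the cutoff $\psi^{B,b}$ with the Littlewood--Paley projectors $P_k$: the admissible cutoff is only an asymptotically sharp frequency comparison, so the low-frequency tail and the boundary indices of the sum must be treated as separate, smoothing contributions. Both are harmless at every Sobolev scale, and no new idea beyond the main dyadic computation above is required.
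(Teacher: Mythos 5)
The paper does not prove this proposition; it is stated as a classical fact in the appendix with references to M\'etivier, H\"ormander and Taylor, so you cannot be compared against the paper's own argument. Your dyadic strategy is the standard one and is the right way to prove this statement. However, as written, the final reassembly step does not close.

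The problem is the substitution $\norm{P_k u}_{L^2}\leq 2^{-k\mu}\norm{u}_{H^\mu}$ \emph{before} summing the dyadic blocks. This throws away the $\ell^2$ structure of the Littlewood--Paley sequence of $u$, and the loss is fatal: plugging your ``dyadic estimate'' $\norm{T_a P_k u}_{L^2}\leq C\, 2^{k(m-\mu)}\norm{a}_{H^{1/2-m}}\norm{u}_{H^\mu}$ into your final display gives
\[
\sum_k 2^{2k(\mu-m)}\,\norm{T_a P_k u}_{L^2}^2
\;\leq\; C\norm{a}_{H^{1/2-m}}^2\norm{u}_{H^\mu}^2\sum_k 2^{2k(\mu-m)}\cdot 2^{2k(m-\mu)}
\;=\; C\norm{a}_{H^{1/2-m}}^2\norm{u}_{H^\mu}^2\sum_k 1,
\]
which diverges. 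The correct bookkeeping is to stop one step earlier and keep the dyadic block of $u$:
\[
\norm{T_a P_k u}_{L^2}\leq C\, 2^{km}\norm{a}_{H^{1/2-m}}\,\norm{P_k u}_{L^2},
\]
so that the $2^{km}$ gain from $a$ combines with the weight $2^{k(\mu-m)}$ to restore the $H^\mu$ weight:
\[
\sum_k 2^{2k(\mu-m)}\norm{T_a P_k u}_{L^2}^2
\;\leq\; C\norm{a}_{H^{1/2-m}}^2\sum_k 2^{2k\mu}\norm{P_k u}_{L^2}^2
\;\leq\; C\norm{a}_{H^{1/2-m}}^2\norm{u}_{H^\mu}^2.
\]
Everything before the final sum --- the Bernstein bound $\norm{P_{\leq k-N_0}a}_{L^\infty}\leq C\,2^{km}\norm{a}_{H^{1/2-m}}$, whose geometric series is precisely where $m>0$ enters, and the spectral localization of each $T_aP_ku$ --- is correct.

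A secondary remark: you invoke Proposition \ref{paracomposition_Notations and functional analysis_proposition Sobolev spaces on balls} for the reassembly, but that proposition is stated for Fourier supports in \emph{balls} and requires $s>0$, i.e.\ $\mu-m>0$, whereas the statement you are proving imposes no sign condition on $\mu-m$. Since you correctly observe that $T_aP_ku$ is supported in an \emph{annulus} of radius $\sim 2^k$ (up to finitely many low-frequency blocks), you should appeal to the annulus version of the reassembly lemma, which is valid for every $s\in\r$. This is the same quasi-orthogonality you describe informally, but the citation should match.
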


A main feature of paraproducts is the existence of paralinearisation theorems which allow us to replace nonlinear expressions by paradifferential expressions, at the price of error terms which are smoother than the main terms.

\begin{theorem} \label{paracomposition_Notions of microlocal analysis_Paradifferential Calculus_paralinearisation para product} 
Let $\alpha, \beta \in \r $ be such that $\alpha,\beta> \frac{1}{2}$, then
\begin{itemize}
\item Bony's linearisation theorem: For all $C^\infty$ function F, if $a \in H^\alpha (\d)$ then;
\[ F(a)- F(0)-T_{F'(a)}a \in H^{2\alpha-\frac{1}{2}} (\d). \]
\item If $a\in H^\alpha(\d)$ and $b\in H^\beta(\d)$, then $ab-T_ab-T_ba \in H^{\alpha+ \beta-\frac{1}{2}} (\d)$. Moreover there exists a positive constant K independent of a and b such that:
\[\norm{ab-T_ab-T_ba}_{H^{\alpha+ \beta-\frac{1}{2}} }\leq K  \norm{a}_{H^\alpha} \norm{b}_{H^\beta}  .\]
\end{itemize}
\end{theorem}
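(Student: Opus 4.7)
The plan is to derive both claims from Bony's dyadic decomposition together with the Bernstein estimates of Section~\ref{paracomposition_section Notations and functional analysis} and the Sobolev reassembly result Proposition~\ref{paracomposition_Notations and functional analysis_proposition Sobolev spaces on balls}. I first prove the product rule, and then reduce Bony's linearisation theorem to the same type of estimate.

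For the product formula, decompose $a=\sum_p \Delta_p a$ and $b=\sum_q \Delta_q b$ with $\Delta_p=P_p$, and split
\[
ab=\sum_{q\geq p+N}\Delta_p a\,\Delta_q b+\sum_{p\geq q+N}\Delta_p a\,\Delta_q b+\sum_{|p-q|<N}\Delta_p a\,\Delta_q b
\]
for $N$ large enough. Up to paradifferential operators whose symbols belong to $\Gamma^{-\infty}_0$ (coming from reconciling the smooth Fourier cutoff $\psi^{B,b}$ with the sharp dyadic partition), the first two sums agree with $T_a b$ and $T_b a$, and these discrepancies map $H^\alpha\times H^\beta$ to $H^s$ for every $s$ by Theorem~\ref{paracomposition_Notions of microlocal analysis_Paradifferential Calculus_para continuity}. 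It therefore suffices to estimate the resonant remainder $R(a,b)=\sum_{|p-q|<N}\Delta_p a\,\Delta_q b$. Each summand has spectrum in a ball of radius $\sim 2^{\max(p,q)}$, and H\"older combined with Bernstein (Proposition~\ref{paracomposition_Notations and functional analysis_bernstein1}) give
\[
\norm{\Delta_p a\,\Delta_q b}_{L^2}\leq \norm{\Delta_p a}_{L^\infty}\norm{\Delta_q b}_{L^2}\leq C\,2^{-p(\alpha-1/2)}2^{-q\beta}\,c_p d_q\norm{a}_{H^\alpha}\norm{b}_{H^\beta}
\]
with $(c_p),(d_q)\in \ell^2(\n)$ of unit norm. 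Because $\alpha>1/2$, regrouping by $k=\max(p,q)$ produces a piece $R_k$ with spectrum in a ball of radius $\sim 2^k$ and $L^2$ norm bounded by $C\,2^{-k(\alpha+\beta-1/2)}e_k\norm{a}_{H^\alpha}\norm{b}_{H^\beta}$ with $(e_k)\in \ell^2$; Proposition~\ref{paracomposition_Notations and functional analysis_proposition Sobolev spaces on balls} then delivers the $H^{\alpha+\beta-1/2}$ bound on $R(a,b)$ with the claimed product estimate.

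For Bony's linearisation, I exploit the telescoping identity
\[
F(a)-F(0)=\sum_{p\geq -1}\bigl[F(S_{p+1}a)-F(S_p a)\bigr]=\sum_{p\geq -1}m_p\,\Delta_p a,\qquad m_p=\int_0^1 F'(S_p a+t\Delta_p a)\,dt,
\]
with $S_{-1}a=0$. Since $H^\alpha\hookrightarrow L^\infty$ for $\alpha>1/2$, Bernstein yields $\norm{a-S_p a}_{L^\infty}\leq C\,2^{-p(\alpha-1/2)}\norm{a}_{H^\alpha}$, hence the smoothness of $F$ applied on a bounded set containing the range of $a$ and of its dyadic truncations gives $\norm{m_p-F'(a)}_{L^\infty}\leq C\,2^{-p(\alpha-1/2)}$. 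Rewriting $T_{F'(a)}a=\sum_p S_{p-N}(F'(a))\,\Delta_p a$ modulo a paradifferential remainder that is smoother of arbitrary order, the problem reduces to estimating
\[
\sum_{p\geq -1}\bigl[m_p-S_{p-N}F'(a)\bigr]\Delta_p a.
\]
Each bracket is $O(2^{-p(\alpha-1/2)})$ in $L^\infty$, so the $L^2$ norm of the $p$-th term is bounded by $C\,2^{-p(2\alpha-1/2)}c_p$ with $(c_p)\in\ell^2$. Performing an auxiliary Littlewood-Paley decomposition of $m_p-S_{p-N}F'(a)$ into $S_{p-N'}$ and high-mode parts localises the spectrum of the product in a ball of radius $\sim 2^p$ (the high-mode contributions gain additional regularity and are absorbed), and Proposition~\ref{paracomposition_Notations and functional analysis_proposition Sobolev spaces on balls} gives the desired $H^{2\alpha-1/2}$ bound.

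The hard part is not any single estimate but the bookkeeping associated with two reconciliations: (i) matching the smooth Fourier cutoff $\psi^{B,b}$ defining $T_a b$ with the sharp dyadic partition that enters Bony's split, and (ii) handling the fact that the multiplier $m_p$ in the linearisation step has no compact spectrum. Both are resolved by noting that the unwanted contributions gain an extra factor $2^{-p(\alpha-1/2)}$ or $2^{-q(\beta-1/2)}$, which is precisely what is needed to sum dyadically and to invoke Proposition~\ref{paracomposition_Notations and functional analysis_proposition Sobolev spaces on balls}.
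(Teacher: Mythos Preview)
Your argument is the standard Bony--Meyer proof and is essentially correct; the paper itself does not prove this theorem but states it as a classical result, citing \cite{Hormander71,Hormander97,Taylor07,Metivier08}, so there is no ``paper's own proof'' to compare against. Your sketch matches what one finds in those references (in particular Meyer's telescoping identity for the paralinearisation and the dyadic remainder estimate for the product), and the two bookkeeping issues you flag are exactly the points where care is needed; one minor remark is that when you assert $(e_k)\in\ell^2$ for the diagonal remainder it is worth saying explicitly that $\sum_k (c_k d_k)^2\le \|c\|_{\ell^2}^2\|d\|_{\ell^2}^2$, and that the bound $\|F'(a)-S_{p-N}F'(a)\|_{L^\infty}\lesssim 2^{-p(\alpha-1/2)}$ uses the Sobolev embedding $H^\alpha\hookrightarrow C^{\alpha-1/2}_*$ together with the stability of Zygmund spaces under smooth composition.
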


\section{Continuity of limited regularity paradifferential exotic symbols on $L^p$ spaces}\label{Cont of lim reg exotic sym}

We start by giving the following analogue of Theorem $2.1.A$ of \cite{Taylor91}.
\begin{theorem}\label{Cont of lim reg exotic sym_thm for gen symb}
Consider four real numbers $r>0,m\in \r$ and $0\leq \delta,\rho \leq 1$, then for all $a(x,\xi)\in C^{r}_* S^m_{\rho,\delta}$ such that $a^*(x,\xi)\in C^{r}_* S^m_{\rho,\delta}$ where:
$$
a^*(x,\xi)=\frac{1}{2\pi}\int_{\d\times \hat{\d}}e^{-iy.\eta} \bar{a}(x-y,\xi-\eta)dyd\eta,
$$
then,
\[\op(a):W^{s+m+(\frac{1}{2}-\frac{1}{p})(1-\rho),p}\rightarrow W^{s,p}, \text{ with }p\in[2,+\infty] \]
provided $0<s<r$. Furthermore, under these hypothesis,
\[
\op(a):C^{s+m+\frac{1}{2}(1-\rho)}_*\rightarrow C^{s}_*.
\]
Moreover there exists a constant K such that:
\[\norm{\op(a)}_{W^{s+m+(\frac{1}{2}-\frac{1}{p})(1-\rho),p}\rightarrow W^{s,p}}\leq K \ {}^* M^{m,r}_{\rho,\delta}(a;1),\text{ and,}\]
\[\norm{\op(a)}_{C^{s+m+\frac{1}{2}(1-\rho)}_*\rightarrow C^{s}_*}\leq K \ {}^* M^{m,r}_{\rho,\delta}(a;1).  \]
\end{theorem}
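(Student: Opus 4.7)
The plan is to adapt the dyadic decomposition proof of Theorem 2.1.A in \cite{Taylor91} to the limited spatial regularity setting, tracking explicitly the symbol semi-norms ${}^{*}M^{m,r}_{\rho,\delta}(a;1)$ that appear in the conclusion. First I would decompose the symbol in the frequency variable: $a(x,\xi)=\sum_{k\geq 0} a_k(x,\xi)$ with $a_k$ supported in $\set{\abs{\xi}\sim 2^k}$. Thanks to an analogue of Lemma \ref{paracomposition_Notions of microlocal analysis_Paradifferential Calculus_propostion para action spectrum} (with some spreading controlled by $\delta$), the piece $\op(a_k)$ essentially sends frequencies $\sim 2^k$ to frequencies $\sim 2^k$, so after this reduction one is left with almost-orthogonal frequency-localized blocks.

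Second, I would establish the $L^2\to L^2$ bound via a Cotlar--Knapp--Stein almost-orthogonality argument on the blocks $\op(a_k)$. The compositions $\op(a_k)\op(a_j)^*$ and $\op(a_k)^*\op(a_j)$ have symbols controllable by symbolic calculus; the key role of the hypothesis $a^*\in C^r_* S^m_{\rho,\delta}$ is precisely to keep the adjoint inside the same class, which for exotic symbols ($\rho=\delta$) is not automatic. Quasi-orthogonality then follows from the essentially disjoint $\xi$-supports of $a_k$ and $a_j$ when $\abs{k-j}$ is large, yielding $\norm{\op(a_k) u}_{L^2}\lesssim 2^{km}\norm{P_{\sim k}u}_{L^2}$ uniformly in $k$ with constant governed by ${}^{*}M^{m,r}_{\rho,\delta}(a;1)$.

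Third, for the $L^p$ and Zygmund bounds I would combine the $L^2$ estimate with the pointwise kernel estimate
\[
\norm{\op(a_k)u}_{L^\infty}\lesssim 2^{k(m+(1-\rho)/2)}\norm{u}_{L^\infty},
\]
obtained by estimating the $L^1_y$-norm of the Schwartz kernel of $\op(a_k)$ via Cauchy--Schwarz; the loss $(1-\rho)/2$ arises because the kernel has $x$-frequency spread up to $2^{k\delta}$ while the symbol decays in $\xi$ only at rate $2^{-k\rho\abs{\alpha}}$. Riesz--Thorin interpolation between $L^2\to L^2$ (no loss) and $L^\infty\to L^\infty$ (loss $(1-\rho)/2$) gives $\norm{\op(a_k)u}_{L^p}\lesssim 2^{k(m+(1/2-1/p)(1-\rho))}\norm{u}_{L^p}$, and summing via the Littlewood--Paley characterizations of $W^{s,p}$ and $C^s_*$ from Appendix \ref{paracomposition_Notions of microlocal analysis_Paradifferential Calculus} yields the claim. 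The upper bound $s<r$ is exactly what is needed for these sums to converge without losing spatial regularity one does not have.

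The main obstacle is the $L^2$ step. For limited regularity exotic symbols one cannot differentiate $a$ in $x$ more than $r$ times, so the standard Calder\'on--Vaillancourt proof does not apply as-is. The remedy is a symbol smoothing decomposition $a=a^\sharp+a^\flat$ where $a^\sharp\in C^\infty S^m_{\rho,\tilde\delta}$ for a slightly enlarged $\tilde\delta$ is handled by the classical Bourdaud--H\"ormander $L^2$ theorem (which is where the adjoint hypothesis is used in a black-box way), while $a^\flat\in C^r_* S^{m-\varepsilon r}_{\rho,\delta}$ is of lower order and may be treated by a direct Schur test using only $r$ spatial derivatives. The interpolation step in the third paragraph is then completely routine, and the self-contained treatment promised before the statement amounts essentially to spelling out the semi-norm constants in this smoothing argument.
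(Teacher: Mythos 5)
Your first three paragraphs --- dyadic decomposition of $a$ in $\xi$, per-block kernel estimate, interpolation between $L^2$ and $L^\infty$, Littlewood--Paley synthesis --- follow essentially the same path as the paper's proof, which adapts the argument of \cite{Taylor91} and a kernel lemma from \cite{Metivier08}. The genuine divergence is your fourth paragraph, and the ``obstacle'' you identify there is a misdiagnosis. The key device in the paper (Lemma \ref{Cont of lim reg exotic sym_key lem Metivier}) estimates the Schwartz kernel of each block $\op(a_k)$ by taking $\xi$-moments only: the weight $(1+\abs{\lambda y}^2)^{\tilde d}$ is paid for with $\partial^\beta_\xi a_k$ via Plancherel, and no $x$-derivative of $a$ is ever taken. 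Consequently the $L^2$-estimate for a single block is obtained directly by Cauchy--Schwarz from the weighted kernel bound (with a $2^{k(1-\rho)}$ loss), and the loss is then removed by the $TT^*$ trick iterated on $\op(a_k)$ itself --- this is exactly where the hypothesis $a^*\in C^r_* S^m_{\rho,\delta}$ is used, to keep $(a_k)^*$ in the class so that the same kernel bound applies to $\op(a_k)\op(a_k)^*$. So the Calder\'on--Vaillancourt scheme works as-is for exotic limited-regularity symbols; the symbol smoothing decomposition $a=a^\sharp+a^\flat$ and the black-boxed Bourdaud--H\"ormander theorem are neither used nor needed, and in fact the whole point of the appendix is to provide the explicit seminorm constants, which a black-boxed smoothing argument would obscure.

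Where the spatial regularity $r$ genuinely enters is in the upgrade from $L^2\to L^2$ to $H^s\to H^s$ with $s>0$: the paper then needs estimates on $\partial^\alpha_x \op(a_k) f$ for $\alpha$ up to roughly $s$, and the synthesis in Proposition 4.1.13 of \cite{Metivier08} requires $r>s$. Thus the constraint $0<s<r$ limits the final Littlewood--Paley summation, not the $L^2$ step. Two smaller points. A full Cotlar--Knapp--Stein apparatus is heavier than necessary here: since $a_k$ has compact $\xi$-support in $\abs{\xi}\sim 2^k$ one has $\op(a_k)u=\op(a_k)P_{\sim k}u$ automatically, so inter-block quasi-orthogonality is supplied by the square function and the only nontrivial input is the single-block $TT^*$. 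And your heuristic for the $(1-\rho)/2$ loss in $L^\infty$ (``$x$-frequency spread up to $2^{k\delta}$'') conflates $\rho$ and $\delta$: the loss is purely a $\xi$-side effect, coming from the fact that the kernel of $\op(a_k)$ concentrates only on a scale $\sim 2^{-k\rho}$ rather than $2^{-k}$, which the Cauchy--Schwarz estimate of its $L^1_y$-norm (with the correctly tuned weight $(1+2^{2k\rho}\abs{y}^2)$) records as a factor $2^{k(1-\rho)/2}$.
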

\begin{remark}
In higher dimension the factor $(\frac{1}{2}-\frac{1}{p})(1-\rho)$ should be adapted to $d(\frac{1}{2}-\frac{1}{p})(1-\rho)$ and the semi norm of order $1$ in the $\xi$ variable in the estimates should be adapted to $\lfloor \frac{d}{2} \rfloor +1$.

If moreover $\delta<1$ then all of the previous results extend to $L^2$ continuity (that is $s=0$), this results from an almost orthogonal decomposition combined with a $T T^*$ argument as shown in Theorem $2$, Section $2.5$ of \cite{Stein93}.

It's a result by H\"ormander \cite{Hormander97} that if $\delta<\rho$ or $\delta=\rho<1$ the hypothesis on $a^*$ is automatically verified. This hypothesis is also shown to be necessary for $\rho=\delta=1$. 
\end{remark}
\begin{proof}
We first notice that it suffices to make $H^s$ and $C^s_*$ estimates as the $L^p$ are obtained directly by interpolation.

 The key estimate follows from the following adaptation of Lemma $4.3.2$ of \cite{Metivier08}:
\begin{lemma}\label{Cont of lim reg exotic sym_key lem Metivier}
There are constants $C$ and $C'$ such that, for all $\lambda>0$ and $q\in C^\infty(\r^d\times \r^d)$ satisfying:
\[
\supp q \subset \r^d \times \set{\abs{\xi}\leq \lambda},\ \ M=\sup_{\abs{\beta}\leq \tilde{d}}\lambda^{\rho \abs{\beta}}\norm{\partial_\xi^\beta q}_{L^\infty}<\infty,
\text{ with }\tilde{d}=\bigg \lfloor \frac{d}{2} \bigg \rfloor+1.\]
Suppose moreover that $q$ and it's derivatives in $\xi$ are uniformly continuous on $\r^d \times \r^d$.
 Then the function,
\[
Q(y)=\int e^{-iy\cdot \xi}q(y,\xi)d\xi,
\]
satisfies:
\begin{equation}\label{Cont of lim reg exotic sym_key lem Metivier_eq1}
\int(1+\abs{\lambda y}^2)^{\tilde{d}}\abs{Q(y)}^2dy\leq C \lambda^d M^2 (1+\lambda^{2(1-\rho)})^{\tilde{d}},
\end{equation}
and,
\begin{equation}\label{Cont of lim reg exotic sym_key lem Metivier_eq2}
\norm{Q}_{L^1(\r^d)}\leq C'M(1+\lambda^{2(1-\rho)})^{\frac{\tilde{d}}{2}}.
\end{equation}
\end{lemma}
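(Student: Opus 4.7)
The plan is to expand the weight by the binomial formula,
$$
(1+|\lambda y|^2)^{\tilde{d}} = \sum_{|\alpha|\leq \tilde{d}} c_\alpha \lambda^{2|\alpha|} y^{2\alpha},
$$
which reduces \eqref{Cont of lim reg exotic sym_key lem Metivier_eq1} to $L^2_y$-bounds on $y^\alpha Q(y)$ for each multi-index $|\alpha|\leq \tilde{d}$. Since $q$ has compact $\xi$-support, integrating by parts $|\alpha|$ times yields
$$
y^\alpha Q(y) = (-i)^{|\alpha|}\int e^{-iy\cdot\xi}\partial_\xi^\alpha q(y,\xi)\, d\xi,
$$
so the problem reduces to controlling the $L^2_y$-norm of such oscillatory integrals.

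For this I would closely follow M\'etivier's proof of his Lemma $4.3.2$ in the Cetraro notes. The key idea is to introduce the two-variable function $F_\alpha(y,z) := \int e^{-iz\cdot\xi}\partial_\xi^\alpha q(y,\xi)\, d\xi$, for which $y^\alpha Q(y) = (-i)^{|\alpha|}F_\alpha(y,y)$, and to combine Plancherel in $\xi$ (pointwise in $y$) with the band-limitation $\supp_\zeta \widehat{F_\alpha(y,\cdot)}\subset \set{|\zeta|\leq \lambda}$ inherited from the $\xi$-support of $q$: the latter allows a Bernstein-type inequality to control the pointwise value $F_\alpha(y,y)$ by a weighted $L^2_z$-average of $F_\alpha(y,\cdot)$, and the former turns that weighted $L^2_z$-norm into $L^2_\xi$-norms of $\partial_\xi^\beta q(y,\cdot)$ for $|\beta|\leq \tilde{d}$. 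Converting the $L^\infty_{x,\xi}$-hypothesis of the present statement to the $L^\infty_x L^2_\xi$-form used by M\'etivier is immediate via the compact $\xi$-support,
$$
\norm{\partial_\xi^\beta q}_{L^\infty_x L^2_\xi}\leq C\lambda^{d/2}\norm{\partial_\xi^\beta q}_{L^\infty_{x,\xi}}\leq C\lambda^{d/2} M\lambda^{-\rho|\beta|},
$$
which accounts for the extra $\lambda^d$-factor in \eqref{Cont of lim reg exotic sym_key lem Metivier_eq1}. Summing the resulting estimates over $|\alpha|\leq \tilde{d}$ with the weights $\lambda^{2|\alpha|}$ from the binomial expansion then yields \eqref{Cont of lim reg exotic sym_key lem Metivier_eq1}.

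The $L^1$ estimate \eqref{Cont of lim reg exotic sym_key lem Metivier_eq2} would follow from Cauchy--Schwarz against the weight,
$$
\norm{Q}_{L^1} \leq \prt{\int (1+|\lambda y|^2)^{\tilde{d}}|Q(y)|^2\, dy}^{1/2} \prt{\int (1+|\lambda y|^2)^{-\tilde{d}}\, dy}^{1/2},
$$
the second factor equaling $C\lambda^{-d/2}$ after the substitution $z = \lambda y$ and being finite precisely because $\tilde{d} = \lfloor d/2 \rfloor + 1$ ensures $2\tilde{d} > d$; combined with \eqref{Cont of lim reg exotic sym_key lem Metivier_eq1} this yields the announced bound. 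The main obstacle I anticipate is the diagonal restriction $F_\alpha(y,y)$: one cannot simply bound $|F_\alpha(y,y)|$ by $\sup_{y'}|F_\alpha(y',y)|$ and integrate in $y$, since that would discard the $e^{-iy\cdot\xi}$-oscillation; the combination of band-limitation in $z$ with the uniform continuity assumption on $q$ and its $\xi$-derivatives is what makes the diagonal trace rigorous, and is exactly the reason the threshold $\tilde{d}>d/2$ is sharp.
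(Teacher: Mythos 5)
Your proposal has the right outline — binomial expansion of the weight, integration by parts in $\xi$ to trade $y^\alpha$ for $\partial_\xi^\alpha q$, a Plancherel-type step, and Cauchy--Schwarz against the integrable weight $(1+|\lambda y|^2)^{-\tilde d}$ for \eqref{Cont of lim reg exotic sym_key lem Metivier_eq2} (that last step is fine, and the normalisation $\norm{\partial^\beta_\xi q}_{L^\infty_x L^2_\xi}\lesssim\lambda^{d/2}M\lambda^{-\rho|\beta|}$ correctly accounts for the $\lambda^d$ prefactor). You also correctly single out the real difficulty: making rigorous the ``Plancherel'' step for the diagonal trace $F_\alpha(y,y)$, when $q$ is merely $L^\infty$ in $x$ so that $\hat q(\cdot,\xi)$ is a distribution and one cannot simply take the Fourier transform of $Q$.

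However, the mechanism you propose for the diagonal trace — band-limitation in $z$ plus a weighted Bernstein bound $|F_\alpha(y,y)|^2\lesssim\lambda^d\int|F_\alpha(y,z)|^2(1+\lambda|y-z|)^{-2N}dz$ — does not close as written. After integrating in $y$ and swapping integrals, you are left either with $\int_z\int_y|F_\alpha(y,z)|^2(1+\lambda|y-z|)^{-2N}\,dy\,dz$, which after the change $w=y-z$ is again a family of diagonal-type integrals $\int|F_\alpha(z+w,z)|^2dz$ (circular), or, if you majorize $|F_\alpha(y,z)|^2$ by $\sup_y|F_\alpha(y,z)|^2$ to integrate in $y$, with $\int_z\sup_y|F_\alpha(y,z)|^2dz$. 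To make that $z$-integral converge you must integrate by parts in $\xi$ yet again, which consumes $\tilde d-|\alpha|$ derivatives of $\partial^\alpha_\xi q$: this fails outright when $|\alpha|=\tilde d$, and even at $|\alpha|=0$ it yields a bound $\lambda^{d}M^2\lambda^{d(1-\rho)}$ rather than the target $\lambda^dM^2$ — a loss of $\lambda^{d(1-\rho)}$ that matters since $\rho<1$ in the applications. So Bernstein gives you the right \emph{uniform} pointwise bound, but no $y$-decay to integrate, and the $y$-integral must gain $\lambda^{-d}$ from somewhere else. The paper resolves this differently: it introduces the off-diagonal function $\tilde Q(x,y)$, applies Plancherel for fixed $x$, and then recovers $\norm{\tilde Q(y,y)}_{L^2_y(C(y_0,R))}^2$ as the $\eps\to0$ limit of $c_d^{-1}\eps^{-d}\norm{\tilde Q}_{L^2(K(y_0,R,\eps))}^2$ over thin slabs $K(y_0,R,\eps)$ around the diagonal, using the fundamental theorem of calculus; the uniform-continuity hypothesis — which the paper adds to M\'etivier's original Lemma $4.3.2$ precisely for this purpose — makes that approximation uniform, and a covering argument closes it. You mention uniform continuity at the end but tie the diagonal trace to band-limitation and to $\tilde d>d/2$, whereas $\tilde d>d/2$ only enters to make the weight integrable in the $L^1$ step and plays no role in the diagonal restriction. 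That substitution of mechanism is the genuine gap.
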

\begin{proof}[Proof of Lemma \ref{Cont of lim reg exotic sym_key lem Metivier}]
For $\abs{\alpha}\leq \tilde{d}$ we have:
\[
y^\alpha Q(y)=\int e^{-i y\cdot \xi }D^{\alpha}_\xi q(y,\xi)d\xi.
\]
At this step we would like to apply Plancherel's theorem to deduce:
\begin{equation}\label{Cont of lim reg exotic sym_key lem Metivier_proof_eq1}
\int \abs{y^{2\alpha}}\abs{Q(y)}^2 dy\leq C \lambda^{d-2\rho \alpha}M^2,
\end{equation}
which is the argument given in \cite{Metivier08}, as the application of the Plancherel's theorem does not seem immediate to us we opted to expand upon it to make it's application more immediate. We first notice that it suffice to prove \eqref{Cont of lim reg exotic sym_key lem Metivier_proof_eq1} for $\alpha=0$. To do so we introduce the function:
\[
\tilde{Q}(x,y)=\int e^{-iy\cdot \xi}q(x,\xi)d\xi=\fr_{\xi}q(x,y),
\]
in this setting we can apply Plancherel's theorem to deduce:
\begin{equation}\label{Cont of lim reg exotic sym_key lem Metivier_proof_eq2}
\norm{Q}_{L^\infty_x L^2_y}\leq \norm{q}_{L^\infty_x L^2_\xi}.
\end{equation}
getting back to \eqref{Cont of lim reg exotic sym_key lem Metivier_proof_eq1} we want to estimate $\norm{\tilde{Q}(y,y)}^2_{L_y^2(\r^d)}$, to do so we estimate uniformly on cubes the norms $\norm{\tilde{Q}(y,y)}^2_{L_y^2(C(y_0,R))}$. For this we define the set:
\[
K(y_0,R,\eps)=\set{(x,y), y \in C(y_0,R), \abs{x_j-y_j}\leq \eps, j\in [1,\cdots,d]}.
\]
Thus by the fundamental Theorem of calculus:
\[
\frac{1}{c_d \eps^d}\int_{K(y_0,R,\eps)}\abs{\tilde{Q}(x,y)}^2dx dy \underset{\eps\rightarrow 0}\longrightarrow \int_{C(y_0,R)} \abs{\tilde{Q}(y,y)}^2dy
\]
\begin{align*}
\norm{\tilde{Q}(y,y)}^2_{L_y^2(C(y_0,R))}&\leq C_{K(y_0,R,\eps)} \frac{1}{\eps^{d}} \norm{\tilde{Q}(x,y)}_{L^2(K(R,\eps))}^2\\
&\leq C_{K(R,\eps)} \norm{q}_{L^\infty_x(C(0,\eps); L^2_\xi(C(0,R))}^2,
\end{align*}
where the constant $C_{K(y_0,R,\eps)}$ can be chosen  uniformly in $y_0$ by the uniform continuity of $q$. Now to explicit the dependence of $C_{K(R,\eps)}$ on the different parameters, by Plancherel's theorem we have:
\begin{align*}
\frac{R^d}{c'_d\eps^d}\int_{\r^d}\abs{\prod^d_{j=1}\text{sinc}\bigg(2 \xi_j R\bigg)e^{iy^j_0\xi_j}*q(x,\xi)}^2dx d\xi&=\frac{1}{c_d \eps^d}\int_{K(R,\eps)}\abs{\tilde{Q}(x,y)}^2dx dy\\
& \underset{\eps\rightarrow 0}\longrightarrow \int_{C(0,R)} \abs{\tilde{Q}(y,y)}^2dy.
\end{align*}
Thus we cover the diagonal $(y,y)$ in $\r^d\times \r^d$ by compact sets $(K(y_0^i,R_i,\eps_i))_{i\in \n}$ where $y_0^i, R_i$ and $\eps_i$ are chosen in a manner to ensure that the sum of the volume of the different intersections between elements of this cover is summable.

Thus we have:
\begin{equation*}
\int \abs{y^{2\alpha}}\abs{Q(y)}^2 dy\leq C \lambda^{d-2\rho \alpha}M^2.
\end{equation*}
Multiplying by $\lambda^{2\abs{\alpha}}$ and summing in $\alpha$, implies \eqref{Cont of lim reg exotic sym_key lem Metivier_eq1}. Since $\tilde{d}>\frac{d}{2}$, the second estimate \eqref{Cont of lim reg exotic sym_key lem Metivier_eq2} follows.
\end{proof}

Getting back to the proof of Theorem \ref{Cont of lim reg exotic sym_thm for gen symb}, we are working with $d=\tilde{d}=1$ in Lemma \ref{Cont of lim reg exotic sym_key lem Metivier}.

We start by making a Littlewood-Paley type decomposition by writing:
\[
a(x,\xi)=a(x,\xi)P_0(\xi)+\sum_{k=1}^{\infty}a(x,\xi)P_k(\xi)=a_0(x,\xi)+\sum_{k=1}^{\infty}a_k(x,\xi).
\]

The proof will be divided in two paragraphs where we do the Sobolev and Zygmund estimates respectively.
\paragraph*{\textbf{Continuity in $H^s$}}
\begin{lemma}\label{Cont of lim reg exotic sym_thm for gen symb_proof_cont on Sob_lem}
For $k \geq 0,$ $\op(a_k)$ maps to $L^2$ to $H^\infty$. Moreover for all $\alpha \in \n$, there is $C_\alpha$ such that for all $a \in C^{r}_* S^m_{\rho,\delta}$, $k\geq 0$ and all $f \in L^2$:
\begin{equation}\label{Cont of lim reg exotic sym_thm for gen symb_proof_cont on Sob_lem_eq1}
\norm{\partial^\alpha_x \op(a_k) f}_{L^2}\leq C_\alpha M^{m,\alpha}_{\rho,\delta}(a;1)
\norm{f}_{L^2}2^{k(m+\alpha)}
\end{equation}
\end{lemma}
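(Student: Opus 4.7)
The plan is first to write, by the Leibniz rule applied under the $\xi$-integral,
\[
\partial^\alpha_x \op(a_k) f(x) = \op(c_{k,\alpha}) f(x), \qquad c_{k,\alpha}(x,\xi) = \sum_{\beta \leq \alpha}\binom{\alpha}{\beta}(i\xi)^{\alpha-\beta}\partial^\beta_x a_k(x,\xi),
\]
so that it suffices to prove the $L^2\to L^2$ bound $\norm{\op(c_{k,\alpha})}_{L^2\to L^2}\lesssim M^{m,\alpha}_{\rho,\delta}(a;1)\,2^{k(m+\alpha)}$. Each summand has $\xi$-support in the dyadic shell $\set{|\xi|\sim 2^k}$ inherited from $a_k = P_k(\xi)a$, so that $|(i\xi)^{\alpha-\beta}|\lesssim 2^{k(\alpha-\beta)}$ there; combined with the symbol bounds on $a$, this yields $|\partial^{\beta'}_\xi c_{k,\alpha}(x,\xi)| \lesssim M^{m,\alpha}_{\rho,\delta}(a;1)\,2^{k(m+\alpha-\rho\beta')}$ for $\beta'\leq 1$, which is exactly the kind of symbol bound that feeds into the Metivier-type lemma.

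Next, I would pass to the kernel representation $\op(c_{k,\alpha})f(x) = \int K(x, x-y)\,f(y)\,dy$ with $K(x,z) = (2\pi)^{-1}\int e^{iz\xi}\,c_{k,\alpha}(x,\xi)\,d\xi$, and apply Lemma \ref{Cont of lim reg exotic sym_key lem Metivier} to $q(z,\xi) := c_{k,\alpha}(x,\xi)$ (with $x$ frozen, so that $q$ is constant in $z$) at scale $\lambda = 2^{k+1}$. The quantity $M$ of that lemma is $\lesssim M^{m,\alpha}_{\rho,\delta}(a;1)\, 2^{k(m+\alpha)}$ by the previous paragraph, which produces the uniform estimate $\sup_x\norm{K(x,\cdot)}_{L^1_z}\lesssim M^{m,\alpha}_{\rho,\delta}(a;1)\, 2^{k(m+\alpha)}$. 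Since the pointwise decay of $K$ in $z$ is independent of the frozen $x$, the transpose integral $\sup_y\int|K(x,x-y)|\,dx$ obeys the same bound, and Schur's test then yields the required $L^2\to L^2$ estimate on $\op(c_{k,\alpha})$. The $L^2\to H^\infty$ mapping property of $\op(a_k)$ follows since $\alpha$ is arbitrary.

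The main obstacle I foresee is interpreting $\partial^\beta_x a$ for $\beta > r$, since $a$ is only Zygmund-regular of order $r$ in $x$ and classical derivatives of higher order need not be bounded functions. To handle this I would decompose $a = \sum_n \Psi_n(D_x) a$ by a Littlewood--Paley partition in the spatial variable: each $\Psi_n a$ has $x$-spectrum in $\set{|\eta|\sim 2^n}$ with $\norm{\Psi_n a(\cdot,\xi)}_{L^\infty}\lesssim 2^{-nr}\langle\xi\rangle^m$ up to the Zygmund constant, while Bernstein gives $\norm{\partial^\beta_x \Psi_n a(\cdot,\xi)}_{L^\infty}\lesssim 2^{n(\beta-r)}\langle\xi\rangle^m$. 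Running the kernel/Schur argument on each $\Psi_n$-piece and summing in $n$ then absorbs the loss factor $2^{n(\beta-r)}$ into the Zygmund semi-norm $M^{m,\alpha}_{\rho,\delta}(a;1)$, which is designed precisely to collect such dyadic contributions; the summation is bookkeeping, not a genuine analytic obstruction, once the right semi-norm is used.
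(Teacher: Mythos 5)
Your kernel-plus-Met\'ivier-lemma strategy for the $L^2\to L^2$ bound is the right one and closely parallels the paper (Schur's test on $\sup_x\norm{K(x,\cdot)}_{L^1}$ versus the paper's weighted Cauchy--Schwarz are essentially interchangeable), but there is a quantitative slip that breaks the proof when $\rho<1$.

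The $L^1$ estimate in Lemma \ref{Cont of lim reg exotic sym_key lem Metivier} is
\[
\norm{Q}_{L^1}\leq C'\,M\,\bigl(1+\lambda^{2(1-\rho)}\bigr)^{\tilde{d}/2},
\]
and the extra factor $\bigl(1+\lambda^{2(1-\rho)}\bigr)^{\tilde{d}/2}\sim 2^{k(1-\rho)}$ (for $\tilde d=1$ and $\lambda=2^{k+1}$) was dropped in your second paragraph, where you assert $\sup_x\norm{K(x,\cdot)}_{L^1_z}\lesssim M^{m,\alpha}_{\rho,\delta}(a;1)\,2^{k(m+\alpha)}$. What the Schur/Cauchy--Schwarz step actually delivers is $\norm{\op(a_k)}_{L^2\to L^2}\lesssim 2^{k(m+1-\rho)}M^{m,0}_{\rho,\delta}(a;1)$, with a genuine loss of $2^{k(1-\rho)}$. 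This is not a bookkeeping issue: the paper confronts exactly this loss in \eqref{Cont of lim reg exotic sym_thm for gen symb_proof_cont on Sob_lem_proof_eq5}, and its removal is the content of the $T T^*$ step. Since by hypothesis $a^*(x,\xi)\in C^r_* S^m_{\rho,\delta}$, the paper applies the crude estimate to $\op(a_k)\op(a_k)^*$, whose symbol lives in $C^r_* S^{2m}_{\rho,\delta}$ on the same dyadic shell; the $T T^*$ lemma then replaces the exponent $m+(1-\rho)$ by $m+\tfrac12(1-\rho)$, and iterating geometrically kills the $(1-\rho)$ loss entirely. Your draft never invokes the hypothesis on $a^*$, which is precisely the hypothesis that makes this iteration possible --- this is the missing idea, not a mere omission of an intermediate inequality.

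Two further remarks. First, your reduction to $\op(c_{k,\alpha})$ with $c_{k,\alpha}=(i\xi+\partial_x)^\alpha a_k$ is the same reduction the paper uses and is fine; once the $\alpha=0$ case is established, the desired $2^{k\alpha}$ comes from the dyadic localization. Second, the concern in your third paragraph about $\partial^\beta_x a$ for $\beta>r$ is a non-issue for this lemma: the Met\'ivier estimate only requires $L^\infty$ control of $\xi$-derivatives of $q$ (the semi-norm $M^{m,\alpha}_{\rho,\delta}(a;1)$ already encodes the $\delta$-loss on excess $x$-derivatives), and since $\partial_\beta\bigl(\alpha-\beta+\delta(\beta-r)^+\bigr)=-1+\delta\mathbf{1}_{\beta>r}<0$, the dominant contribution in $(i\xi+\partial_x)^\alpha a_k$ is always the $\beta=0$ term. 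The spatial Littlewood--Paley decomposition you propose is therefore superfluous. The effort should instead go into the $T T^*$ iteration.
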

\begin{proof}[Proof of Lemma \ref{Cont of lim reg exotic sym_thm for gen symb_proof_cont on Sob_lem}]
Since $a_k$ is compactly supported in $\xi$, one sees that $\op(a_k) f$ is given by the convergent integral:
\begin{equation}\label{Cont of lim reg exotic sym_thm for gen symb_proof_cont on Sob_lem_proof_eq1}
\op(a_k) f(x)=\int A_k(x,y)f(y)dy,
\end{equation}
where the kernel $A_k(x,y)$ is given by the convergent integral:
\begin{equation}\label{Cont of lim reg exotic sym_thm for gen symb_proof_cont on Sob_lem_proof_eq2}
\op(a_k)=\frac{1}{2\pi}\int e^{i(x-y) \xi} a_k(x,\xi)d\xi.
\end{equation}

Moreover on the support of $a_k$, $1+\abs{\xi}\simeq 2^k$. Therefore Lemma \ref{Cont of lim reg exotic sym_key lem Metivier} can be applied with $\lambda=2^{k+1}$, implying that:
\begin{equation}\label{Cont of lim reg exotic sym_thm for gen symb_proof_cont on Sob_lem_proof_eq3}
\int(1+2^{2k}\abs{x-y}^2)\abs{A_k(x,y)}^2dy\leq C 2^{2km+k+2(1-\rho)k }  M^{m,0}_{\rho,\delta}(a;1)^2 .
\end{equation}

Hence for $f \in \sr(\d),$ Cauchy-Schwartz inequality implies that:
\begin{equation}\label{Cont of lim reg exotic sym_thm for gen symb_proof_cont on Sob_lem_proof_eq4}
\abs{\op(a_k) f(x)}^2\leq C 2^{2km+k+2(1-\rho)k  }  M^{m,0}_{\rho,\delta}(a;1)^2 \int\frac{ 2^{2km+k+2(1-\rho)k  } \abs{f(y)}^2}{(1+2^{2k}\abs{x-y}^2)}dy .
\end{equation}

The integral $2^{k}\int (1+2^{2k}\abs{x-y}^2)^{-1}dx=C'$ is finite and independent of $k$. Thus:
\begin{equation}\label{Cont of lim reg exotic sym_thm for gen symb_proof_cont on Sob_lem_proof_eq5}
\norm{ \op(a_k) f}_{L^2}\leq C_\alpha M^{m,0}_{\rho,\delta}(a;1)
\norm{f}_{L^2}2^{k(m+(1-\rho))}.
\end{equation}

In order to eliminate the extra factor $2^{k(1-\rho))}$ in \eqref{Cont of lim reg exotic sym_thm for gen symb_proof_cont on Sob_lem_proof_eq5}, we use the fundamental $T T^*$ trick, indeed writing $(a_k)^*$ as the formal symbol of the operator $(\op(a_k))^*$, by the frequency localisation we see that $(a_k)^*\in C^{r}_* S^m_{\rho,\delta}$. Thus $a_k(a_k)^*\in C^{r}_* S^{2m}_{\rho,\delta}$ and applying the previous estimate, as it's uniform in the choice of symbol, to $\op(a_k) (\op(a_k))^*$ we get:
\begin{equation}\label{Cont of lim reg exotic sym_thm for gen symb_proof_cont on Sob_lem_proof_eq6}
\norm{ \op(a_k)(\op(a_k))^* f}_{L^2}\leq C_\alpha M^{m,0}_{\rho,\delta}(a;1)^2
\norm{f}_{L^2}2^{k(2m+(1-\rho))},
\end{equation}
thus by the standard $T T^*$ lemma:
\begin{equation}\label{Cont of lim reg exotic sym_thm for gen symb_proof_cont on Sob_lem_proof_eq7}
\norm{ \op(a_k) f}_{L^2}\leq C_\alpha M^{m,0}_{\rho,\delta}(a;1)
\norm{f}_{L^2}2^{k(m+\frac{1}{2}(1-\rho))}.
\end{equation}

Iterating this estimate we get \eqref{Cont of lim reg exotic sym_thm for gen symb_proof_cont on Sob_lem_eq1} for $\alpha=0$. The symbol $\partial^\alpha_x$ is $(i\xi+\partial_x)^\alpha a_k(x,\xi)$, which gives the desired estimate for larger $\alpha$.
\end{proof}

Now getting back to the continuity in $H^s$ of $\op(a)$ we write for $f\in \sr(\d)$ by the support localisation of $a_k$:
\[
\op(a_k)f=\sum_{\abs{j-k}\leq 3}\op(a_k)P_j f,
\]
Thus by Lemma \ref{Cont of lim reg exotic sym_thm for gen symb_proof_cont on Sob_lem},
\begin{align}\label{Cont of lim reg exotic sym_thm for gen symb_proof_cont on Sob_eq1}
\norm{\partial^\alpha_x \op(a_k) f}_{L^2}&\leq C_\alpha M^{m,\alpha}_{\rho,\delta}(a;1)
\sum_{\abs{j-k}\leq 3}\norm{P_jf}_{L^2}2^{k(m+\alpha)},\nonumber
\intertext{then by Definition \ref{paracomposition_Notations and functional analysis_def Sobolev spaces on r},}
\norm{\partial^\alpha_x \op(a_k) f}_{L^2}&\leq C_\alpha M^{m,\alpha}_{\rho,\delta}(a;1) 2^{k(\alpha-s)}\eps_k,
\end{align}
with,
\begin{equation}\label{Cont of lim reg exotic sym_thm for gen symb_proof_cont on Sob_eq2}
\sum_k \eps_k^2\leq \norm{f}_{H^{s+m}}.
\end{equation}
Now to conclude we recall the following Proposition from \cite{Metivier08}:
\begin{proposition}[Proposition $4.1.13$ of \cite{Metivier08}]\label{Cont of lim reg exotic sym_thm for gen symb_proof_cont on Sob_Proposition 4.1.13}
Let $0<s$ and let $n$ be an integer, $n>s$. There is a constant C such that, for all sequence $(f_k)_{k\geq 0}\in H^n(\d^d)$ satisfying for all $\alpha \in \n^d, \abs{\alpha}\leq n$:
\begin{equation}\label{Cont of lim reg exotic sym_thm for gen symb_proof_cont on Sob_Proposition 4.1.13_eq1}
\norm{\partial_x^\alpha f_k}_{L^2(\d^d)}\leq 2^{k(\abs{\alpha}-s)}\eps_k, \text{ with }(\eps_k)\in l^2,
\end{equation}
the sum $f=\sum f_k$ belongs to $H^s(\d^d)$ and,
\begin{equation}\label{Cont of lim reg exotic sym_thm for gen symb_proof_cont on Sob_Proposition 4.1.13_eq2}
\norm{f}^2_{H^s(\d^d)}\leq C \sum^\infty_{k=0}\eps_k^2.
\end{equation}
\end{proposition}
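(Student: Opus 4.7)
The plan is a standard Littlewood--Paley duality argument exploiting both ends of the hypothesis (the $L^2$ estimate and the $\partial^n_x$ estimate), based on splitting the sum $f=\sum_k f_k$ into two regimes relative to the frequency annulus $\{|\xi|\sim 2^j\}$. The key difficulty to keep in mind is that the $f_k$ are \emph{not} assumed to be spectrally localized, so one cannot simply pair $P_j$ with $f_k$ for $k\sim j$; instead the cross-interactions in all regimes $k<j$ and $k\geq j$ must be controlled.

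First I would verify that the series $\sum_k f_k$ converges in $\sr'(\d^d)$: the $L^2$ bound with $|\alpha|=0$ gives $\norm{f_k}_{L^2}\leq 2^{-ks}\eps_k$ and since $s>0$ and $(\eps_k)\in \ell^2$, partial sums are Cauchy in $L^2$, so $f$ is well defined. Next, fix a Littlewood--Paley projector $P_j$ (as in Definition \ref{paracomposition_section Notations and functional analysis_def LP Theory}) and estimate $\norm{P_j f_k}_{L^2}$ by two complementary bounds: for $k\geq j$, use the crude bound $\norm{P_j f_k}_{L^2}\leq \norm{f_k}_{L^2}\leq 2^{-ks}\eps_k$; for $k<j$ (and $j\geq 1$), use a Bernstein-type estimate — on the support of $P_j$ we have $|\xi|\sim 2^j$, so
\begin{equation*}
\norm{P_j f_k}_{L^2}\leq C 2^{-jn}\norm{\partial^n_x f_k}_{L^2}\leq C 2^{-jn}\cdot 2^{k(n-s)}\eps_k.
\end{equation*}
Factoring out $2^{-js}$ in both bounds gives
\begin{equation*}
2^{js}\norm{P_j f}_{L^2}\leq C\sum_{k<j}2^{(k-j)(n-s)}\eps_k+C\sum_{k\geq j}2^{(j-k)s}\eps_k=C(\phi*\eps)_j,
\end{equation*}
where $\phi_l=2^{l(n-s)}\mathbf{1}_{l<0}+2^{-ls}\mathbf{1}_{l\geq 0}$.

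The final step is to observe that $\phi\in \ell^1(\z)$: the series $\sum_{l<0}2^{l(n-s)}$ converges because $n-s>0$, and $\sum_{l\geq 0}2^{-ls}$ converges because $s>0$. Young's convolution inequality then yields $\norm{\phi*\eps}_{\ell^2}\leq \norm{\phi}_{\ell^1}\norm{\eps}_{\ell^2}$, hence
\begin{equation*}
\sum_{j\geq 1}2^{2js}\norm{P_j f}^2_{L^2}\leq C\norm{\eps}^2_{\ell^2}.
\end{equation*}
The low-frequency piece $P_0 f$ is handled separately: $\norm{P_0 f}_{L^2}\leq \sum_k \norm{P_0 f_k}_{L^2}\leq \sum_k 2^{-ks}\eps_k\leq C\norm{\eps}_{\ell^2}$ by Cauchy--Schwarz (using $s>0$). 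Combining these with Definition \ref{paracomposition_Notations and functional analysis_def Sobolev spaces on r} gives $f\in H^s(\d^d)$ with the stated bound.

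The main obstacle, insofar as there is one for this classical result, is the bookkeeping of the two regimes and the verification that both tails of $\phi$ are summable — this is precisely where both hypotheses $n>s$ and $s>0$ are used essentially and symmetrically. The rest is routine. I note that the argument gives a constant $C$ depending only on $s$ and $n$ (through $\norm{\phi}_{\ell^1}$), and that the analogous statement with $L^p$ in place of $L^2$ would require either Littlewood--Paley square-function characterizations of $W^{s,p}$ (for $1<p<\infty$) or working with $C^s_*$ directly, but that is not needed here.
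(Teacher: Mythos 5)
Your proof is correct and is the standard argument: split the Littlewood--Paley pieces $P_j f_k$ into the two regimes $k\geq j$ (use the raw $L^2$ bound) and $k<j$ (use the $n$-th derivative bound plus the frequency localization of $P_j$), obtain a convolution $2^{js}\norm{P_jf}_{L^2}\lesssim (\phi*\eps)_j$ with $\phi\in\ell^1$ exactly because $s>0$ and $n-s>0$, and close with Young's inequality. The paper quotes this statement from M\'etivier without reproducing the proof, and your argument is precisely the one given in that reference; the only cosmetic point is that in $d>1$ the bound $\norm{P_jf_k}_{L^2}\lesssim 2^{-jn}\sum_{\abs{\alpha}=n}\norm{\partial^\alpha_xf_k}_{L^2}$ relies on $\sum_{\abs{\alpha}=n}\abs{\xi^\alpha}\gtrsim \abs{\xi}^n$ on the annulus, which is elementary but worth a word.
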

Applying Proposition \ref{Cont of lim reg exotic sym_thm for gen symb_proof_cont on Sob_Proposition 4.1.13} to $\op(a_k)f$ we get the desired Sobolev continuity and the desired estimate.
\paragraph*{\textbf{Continuity in $C_*^s$}}
The proof follows the same lines as previously, indeed applying \eqref{Cont of lim reg exotic sym_key lem Metivier_eq2} to \eqref{Cont of lim reg exotic sym_thm for gen symb_proof_cont on Sob_lem_proof_eq1} we get the following lemma.
\begin{lemma}\label{Cont of lim reg exotic sym_thm for gen symb_proof_cont on Zyg_lem}
For $k \geq 0,$ $\op(a_k)$ maps to $L^\infty$ to $W^{\infty,\infty}$. Moreover for all $\alpha \in \n$, there is $C_\alpha$ such that for all $a \in C^{r}_* S^m_{\rho,\delta}$, $k\geq 0$ and all $f \in L^\infty$:
\begin{equation}\label{Cont of lim reg exotic sym_thm for gen symb_proof_cont on Zyg_lem_eq1}
\norm{\partial^\alpha_x \op(a_k) f}_{L^\infty}\leq C_\alpha M^{m,\alpha}_{\rho,\delta}(a;1)
\norm{f}_{L^\infty}2^{k(m+\alpha+\frac{1}{2})}.
\end{equation}
\end{lemma}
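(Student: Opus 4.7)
The plan is to bypass the $TT^*$ bootstrap of Lemma~\ref{Cont of lim reg exotic sym_thm for gen symb_proof_cont on Sob_lem} and instead control $\op(a_k)$ directly through the $L^1_y$ norm of its integral kernel uniformly in $x$, invoking the $L^1$ estimate \eqref{Cont of lim reg exotic sym_key lem Metivier_eq2} of Lemma~\ref{Cont of lim reg exotic sym_key lem Metivier} in place of the $L^2$ estimate \eqref{Cont of lim reg exotic sym_key lem Metivier_eq1} that drove the Sobolev proof.

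Concretely, the compact $\xi$-support of $a_k$ gives, as in \eqref{Cont of lim reg exotic sym_thm for gen symb_proof_cont on Sob_lem_proof_eq1}--\eqref{Cont of lim reg exotic sym_thm for gen symb_proof_cont on Sob_lem_proof_eq2},
\[
\op(a_k)f(x)=\int A_k(x,y)\,f(y)\,dy,\qquad A_k(x,y)=\frac{1}{2\pi}\int e^{i(x-y)\xi}a_k(x,\xi)\,d\xi.
\]
Freezing $x$ and setting $q(y,\xi):=a_k(x,\xi)$ (a function of $\xi$ alone, hence trivially uniformly continuous in $y$, and uniformly continuous in $\xi$ since $a_k$ is smooth with compact $\xi$-support), the hypotheses of Lemma~\ref{Cont of lim reg exotic sym_key lem Metivier} are met with $\lambda\sim 2^k$, and its constant $M$ is dominated by $2^{km}\,M^{m,0}_{\rho,\delta}(a;1)$: the very weighting $\lambda^{\rho|\beta|}\norm{\partial_\xi^\beta q}_{L^\infty}$ is calibrated against the symbol growth condition \eqref{paracomposition_Notions of microlocal analysis_Paradifferential Calculus_ definition growth xi condition para}.

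Invoking \eqref{Cont of lim reg exotic sym_key lem Metivier_eq2} in dimension $d=\tilde d=1$ therefore yields, uniformly in $x$,
\[
\int|A_k(x,y)|\,dy\lesssim 2^{km}\,M^{m,0}_{\rho,\delta}(a;1)\bigl(1+2^{2k(1-\rho)}\bigr)^{1/2},
\]
and combining this with the trivial Schur-type pointwise bound $|\op(a_k)f(x)|\leq\norm{A_k(x,\cdot)}_{L^1_y}\norm{f}_{L^\infty}$ delivers the announced $L^\infty$ estimate when $\alpha=0$. The case $\alpha\geq 1$ would then be handled exactly as at the end of the proof of Lemma~\ref{Cont of lim reg exotic sym_thm for gen symb_proof_cont on Sob_lem}: the symbol of $\partial_x^\alpha\op(a_k)$ is $(i\xi+\partial_x)^\alpha a_k$, which still lives in the exotic class of order $m+\alpha$ with coefficient norms controlled by $M^{m,\alpha}_{\rho,\delta}(a;1)$ and is still supported in the ring $|\xi|\sim 2^k$, so reapplying the $\alpha=0$ case absorbs the extra $x$-derivatives into the factor $2^{k\alpha}$.

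The main conceptual point, compared with Lemma~\ref{Cont of lim reg exotic sym_thm for gen symb_proof_cont on Sob_lem}, is that no $L^\infty$-analogue of the $TT^*$ bootstrap is available, so the loss produced by $(1+\lambda^{2(1-\rho)})^{1/2}$ cannot be halved at the level of a single dyadic block; the $L^1$ kernel bound \eqref{Cont of lim reg exotic sym_key lem Metivier_eq2} is therefore used in essentially optimal form here, and any further refinement of the Zygmund-continuity exponent (such as the sharper exponent appearing in Theorem~\ref{Cont of lim reg exotic sym_thm for gen symb}) must be produced downstream, by combining this single-block bound with the Littlewood-Paley characterization of $C^s_*$.
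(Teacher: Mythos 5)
Your overall method — freeze $x$, write $\op(a_k)f$ against its compactly supported kernel $A_k(x,\cdot)$, and invoke the $L^1$ estimate \eqref{Cont of lim reg exotic sym_key lem Metivier_eq2} of Lemma~\ref{Cont of lim reg exotic sym_key lem Metivier} with $\lambda\sim 2^k$ and $M\lesssim 2^{km}M^{m,0}_{\rho,\delta}(a;1)$ — is exactly the route the paper takes, and those steps are fine.

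The gap is in the sentence claiming that the resulting kernel bound ``delivers the announced $L^\infty$ estimate when $\alpha=0$.'' What \eqref{Cont of lim reg exotic sym_key lem Metivier_eq2} actually gives (with $d=\tilde d=1$) is
\[
\norm{\op(a_k)f}_{L^\infty}\lesssim 2^{km}M^{m,0}_{\rho,\delta}(a;1)\bigl(1+2^{2k(1-\rho)}\bigr)^{1/2}\norm{f}_{L^\infty}\sim 2^{k(m+1-\rho)}M^{m,0}_{\rho,\delta}(a;1)\norm{f}_{L^\infty},
\]
and $2^{k(1-\rho)}\leq 2^{k/2}$ uniformly in $k$ if and only if $\rho\geq \tfrac12$. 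For $\rho<\tfrac12$, the bound you obtain has exponent $m+1-\rho>m+\tfrac12$, so it does not imply the stated estimate \eqref{Cont of lim reg exotic sym_thm for gen symb_proof_cont on Zyg_lem_eq1}; the Schur/$L^1$ kernel argument alone has no analogue of the $TT^*$ bootstrap (as you correctly observe), so this loss cannot be removed at the single-block level. Your closing remark, that the sharper exponent $\tfrac12(1-\rho)$ of Theorem~\ref{Cont of lim reg exotic sym_thm for gen symb} can be ``produced downstream'' by feeding the block estimate into the Littlewood--Paley characterization of $C^s_*$, also does not work: summing blocks never improves a per-block loss, so a per-block factor $2^{k(m+1/2)}$ (let alone $2^{k(m+1-\rho)}$) can only yield a Zygmund loss $\geq\tfrac12$, not $\tfrac12(1-\rho)$. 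To reach the sharper exponent claimed in the Theorem one would need the finer Fefferman-type decomposition of each dyadic annulus into $\sim\lambda^{\rho}$-cubes, not just the $L^1$ kernel estimate. (The paper's own text has the same inconsistency, and its concrete applications take $\rho=1$, where the argument is safe; but the Lemma as stated for all $0\leq\rho\leq 1$ is not established by this route.) So your proof is correct for $\rho\geq\tfrac12$ and has the gap above otherwise; the handling of the $\alpha\geq1$ case by $(i\xi+\partial_x)^\alpha$ is fine.
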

Again we have:
\begin{align}\label{Cont of lim reg exotic sym_thm for gen symb_proof_cont on Zyg_eq1}
\norm{\partial^\alpha_x \op(a_k) f}_{L^\infty}&\leq C_\alpha M^{m,\alpha}_{\rho,\delta}(a;1) 2^{k(\alpha-s)}\norm{f}_{C_*^{s+m+\frac{1}{2}}},
\end{align}
which gives the desired result and estimate.
\end{proof}

\begin{theorem}\label{Cont of lim reg exotic sym_thm for para symb}
Consider four real numbers $r>0,m\in \r$ and $0\leq \delta,\rho \leq 1$, then for all $a(x,\xi)\in \Gamma^0S^m_{\rho,\delta}$,
\[T_a:W^{s+m+(\frac{1}{2}-\frac{1}{p})(1-\rho),p}\rightarrow W^{s,p}, \text{ with }p\in[2,+\infty], \ \ s\in \r. \]
Furthermore, under these hypothesis,
\[
T_a:C^{s+m+\frac{1}{2}(1-\rho)}_*\rightarrow C^{s}_*,\ \ s\in \r.
\]
Moreover there exists a constant K such that:
\[\norm{T_a}_{W^{s+m+(\frac{1}{2}-\frac{1}{p})(1-\rho),p}\rightarrow W^{s,p}}\leq K \  M^{m,0}_{\rho,\delta}(a;1),\text{ and,}\]
\[\norm{T_a}_{C^{s+m+\frac{1}{2}(1-\rho)}_*\rightarrow C^{s}_*}\leq K \  M^{m,0}_{\rho,\delta}(a;1).  \]
\end{theorem}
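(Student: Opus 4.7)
The plan is to reduce to Theorem~\ref{Cont of lim reg exotic sym_thm for gen symb} by exploiting the smoothing-in-$x$ effect of the paradifferential cutoff $\psi^{B,b}$, and then to extend from $s>0$ to all $s \in \r$ via a Littlewood--Paley argument based on the spectral localisation of $T_a$. The first and main conceptual step is to observe that the paradifferential symbol $\sigma_a$ has $x$-Fourier transform $\psi^{B,b}(\eta,\xi)\hat{a}(\eta,\xi)$, supported in $|\eta| \leq B^{-1}(|\xi|+b+1)$. Combining the $\xi$-decay estimates inherited from $a \in \Gamma^0 S^m_{\rho,\delta}$ with Bernstein's inequality (Proposition~\ref{paracomposition_Notations and functional analysis_bernstein1}) applied in the $x$-variable to this band-limited symbol yields
\[
|\partial_x^\beta \partial_\xi^\alpha \sigma_a(x,\xi)| \leq C_{\alpha,\beta}\,M^{m,0}_{\rho,\delta}(a;1)\,(1+|\xi|)^{m-\rho|\alpha|+|\beta|},
\]
so $\sigma_a \in C^\infty S^m_{\rho,1}$ with every seminorm controlled by $M^{m,0}_{\rho,\delta}(a;1)$ up to constants depending only on $B$, $b$, $\alpha$ and $\beta$. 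Note that $\delta$ drops out entirely: regardless of the spatial regularity of $a$, the cutoff forces $\sigma_a$ to be smooth in $x$ with the exotic class loss $|\xi|^{|\beta|}$ per $x$-derivative.

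The second step is to verify that the adjoint symbol $\sigma_a^*$ (as defined in the statement of Theorem~\ref{Cont of lim reg exotic sym_thm for gen symb}) enjoys the same estimates. This is the classical stability of the paradifferential-type subclass of $S^m_{\rho,1}$ consisting of symbols with $x$-Fourier support in $\{|\eta| \lesssim |\xi|\}$ under the $L^2$ adjoint operation: the band-limitation in $x$ makes the oscillatory integral defining $\sigma_a^*$ absolutely convergent, with bounds of the same form as for $\sigma_a$. This is the content of Remark~\ref{BCH formula_remark self contained} and of the works of Bourdaud~\cite{Bourdaud88} and H\"ormander~\cite{Hormander89} cited there. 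With Steps~1 and 2 in hand, Theorem~\ref{Cont of lim reg exotic sym_thm for gen symb} applied with $\delta$ taken to be $1$ and $r$ arbitrary (since $\sigma_a$ is $C^\infty$ in $x$) yields the claimed continuity for every $s>0$, with operator norm controlled by $M^{m,0}_{\rho,\delta}(a;1)$.

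To reach every $s \in \r$, I use the Littlewood--Paley decomposition $u = \sum_k u_k$ together with the spectral localisation property of Lemma~\ref{paracomposition_Notions of microlocal analysis_Paradifferential Calculus_propostion para action spectrum}, which ensures that $T_a u_k$ is spectrally supported in $\{|\xi| \sim 2^k\}$. Re-running the dyadic Lemmas~\ref{Cont of lim reg exotic sym_thm for gen symb_proof_cont on Sob_lem} and \ref{Cont of lim reg exotic sym_thm for gen symb_proof_cont on Zyg_lem} of the proof of Theorem~\ref{Cont of lim reg exotic sym_thm for gen symb} on each dyadic piece of $\sigma_a$ gives the scale-invariant estimate
\[
\norm{T_a u_k}_{L^p} \leq C\,M^{m,0}_{\rho,\delta}(a;1)\,2^{k(m+(\frac{1}{2}-\frac{1}{p})(1-\rho))}\norm{u_k}_{L^p}
\]
first for $p \in \set{2,\infty}$, and then for all $p\in[2,\infty]$ by interpolation. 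Combined with the Littlewood--Paley characterisations of $W^{s,p}$ and $C^s_*$ (Propositions~\ref{paracomposition_Notations and functional analysis_proposition Sobolev spaces on balls} and \ref{paracomposition_Notations and functional analysis_proposition Zygmund spaces on balls}), this yields the announced mapping properties for every $s \in \r$.

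The only non-cosmetic obstacle is Step~2: the stability of the paradifferential $S^m_{\rho,1}$ subclass under the adjoint with explicit control of the seminorms. Once this is granted, the remaining steps are essentially bookkeeping built on Bernstein's inequality, the dyadic characterisation of the function spaces involved, and the already-established Theorem~\ref{Cont of lim reg exotic sym_thm for gen symb}.
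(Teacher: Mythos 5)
Your proposal is correct and follows essentially the same route as the paper: spectral localisation of the paradifferential cutoff (to handle the adjoint hypothesis, to make the dyadic pieces ring-supported so that the Littlewood--Paley characterisations apply for every $s\in\r$, and, via Bernstein, to control the seminorms of $\sigma_a$ by $M^{m,0}_{\rho,\delta}(a;1)$), followed by re-running the dyadic lemmas of Theorem~\ref{Cont of lim reg exotic sym_thm for gen symb}. Your intermediate Step~3 (invoking the general theorem for $s>0$ first) is superfluous since Step~4 already covers all $s\in\r$, and the claim that \emph{every} seminorm of $\sigma_a$ is controlled by $M^{m,0}_{\rho,\delta}(a;1)$ needs the caveat that only $\xi$-derivatives of order $\leq \lfloor d/2\rfloor+1=1$ are actually used, but neither of these affects the validity of the argument.
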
 

\begin{proof}
This simply follows from the spectral localisation property of paradifferential operators, indeed taking $f\in \sr$, then $\op({\sigma_a}_k)f$ is supported in a ring $C_k$ where $\abs{\xi} \sim 2^k$, which is not necessarily the case for $\op(a_k)f$. The spectral localisation property also ensures that the adjoint operator verifies the hypothesis of Theorem \ref{Cont of lim reg exotic sym_thm for gen symb}. Thus rewriting estimates \eqref{Cont of lim reg exotic sym_thm for gen symb_proof_cont on Sob_eq1} and \eqref{Cont of lim reg exotic sym_thm for gen symb_proof_cont on Zyg_eq1} with $\alpha=0$ then by definition of Sobolev spaces and Zygmund spaces using the Littlewood-Paley decomposition we get: 
\[\norm{T_a}_{W^{s+m+(\frac{1}{2}-\frac{1}{p})(1-\rho),p}\rightarrow W^{s,p}}\leq K \ {}^* M^{m,s}_{\rho,\delta}(\sigma_a;2),\text{ and,}\]
\[\norm{T_a}_{C^{s+m+\frac{1}{2}(1-\rho)}_*\rightarrow C^{s}_*}\leq K \ {}^* M^{m,s}_{\rho,\delta}(\sigma_a;2), \ \ s\in \r,  \]
which gives the desired result by the Bernstein inequalities.
\end{proof}

\end{document}